\newtheorem{theorem}{Theorem}[section]
\newtheorem{lemma}[theorem]{Lemma}
\newtheorem{corollary}[theorem]{Corollary}
\newtheorem{proposition}[theorem]{Proposition}
\newtheorem{definition}[theorem]{Definition}
\newtheorem{conjecture}[theorem]{Conjecture}
\numberwithin{equation}{section}
\def\blfootnote{\xdef\@thefnmark{}\@footnotetext}
\def\m{\mathbb}  \def\mcal{\mathcal}  \def\mb{\mathbbm}		
\def\eps{\epsilon}  \def\a{\alpha}    \def\lam{\lambda}	\def\O{\Omega}
\def\p{\partial}  \def\ls{\lesssim}	\def\gs{\gtrsim}
\def\la{\langle}  \def\ra{\rangle}  \def\wh{\widehat}	\def\wt{\widetilde}	
\def\i{\int\limits}
\def\be{\begin{equation}}     \def\ee{\end{equation}}
\def\bp{\begin{pmatrix}}	\def\ep{\end{pmatrix}}
\begin{document}
	\title{Well-posedness and Critical Index Set  of the Cauchy Problem for the Coupled KdV-KdV Systems on $\mathbb{T}$}
	
	\author{Xin Yang and Bing-Yu Zhang}
	%Xin Yang\\ {\small Department of Mathematics\\ University of California, Riverside\\ Riverside, CA 92521, USA \\
		%Email: xiny@ucr.edu}   
	%\\ \quad \\  and   \\ \quad \\ 
	%Bing-Yu Zhang \\  \quad \\  {\small Department of Mathematical Sciences\\ University of Cincinnati\\ Cincinnati, OH 45221-0025, USA\\ 
		%Email: zhangb@ucmail.uc.edu}
	
	\date{}
	\maketitle
	
	\begin{abstract}
		Studied in this paper is   the well-posedness of the Cauchy problem for the coupled KdV-KdV systems 
		\begin{equation}
			\label{z-1}
			\left\{\begin{array}{rcl}
				u_t+a_{1}u_{xxx} &=& c_{11}uu_x+c_{12}vv_x+d_{11}u_{x}v+d_{12}uv_{x},\vspace{0.03in}\\
				v_t+a_{2}v_{xxx}&=& c_{21}uu_x+c_{22}vv_x +d_{21}u_{x}v+d_{22}uv_{x}, \vspace{0.03in}\\
				\left. (u,v)\right |_{t=0} &=& (u_{0},v_{0})
			\end{array}\right.\end{equation}
		posed on the periodic domain $\mathbb{T}$ in the  following four spaces
		\[\mbox{$  {\cal H}^s_1:=H^s_0 (\mathbb{T})\times H^s_0 (\mathbb{T}), \;\; {\cal H}^s_2:=H^s_0 (\m{T})\times H^s(\mathbb{T}), \;\; {\cal H}^s_3:=H^s (\mathbb{T})\times H^s_0 (\mathbb{T}), \;\; {\cal H}^s_4:=H^s (\mathbb{T})\times H^s (\mathbb{T}).$}\] 
		The coefficients are assumed to satisfy $a_1 a_2\neq 0$ and $\sum\limits_{i,j}(c_{ij}^2+d_{ij}^2)>0$.
		
		Fix $k\in\{1,2,3,4\}$.  Then for any coefficients $a_1$, $a_2$,  $(c_{ij})$ and $(d_{ij})$,  it is shown that there exists a critical index $s^*_k \in (-\infty, +\infty]$ such that the  system (\ref{z-1}) is analytically locally well-posed in $\mathcal{H}^s_k$ if $s>s^*_k$ but weakly analytically ill-posed if $s<s^{*}_k$.  Viewing $s^*_k$ as a function of the coefficients, its range $\mcal{C}_k$ is defined to be  the {\em critical index set} for the analytical well-posedness of  (\ref{z-1}) in  $\mcal{H}^s_k$.  
		
		By investigating some properties of the {\em irrationality exponents} of the real numbers and by establishing some sharp bilinear estimates in non-divergence form,   we manage to identify $ {\cal C}_1=  \left \{ -\frac12, \infty \right\} \bigcup \big[\frac12, 1\big]$ and ${\cal C}_q= \left \{ -\frac12, -\frac14, \infty \right\} \bigcup \big[\frac12, 1\big]$ for $q=2,3,4$.
		%\[ \mbox{$ {\cal C}_1=  \left \{ -\frac12, \infty \right\} \bigcup \left \{ \alpha:  \frac12\leq \alpha\leq 1 \right \}$ } \quad\text{and}\quad \mbox{${\cal C}_q= \left \{ -\frac12, -\frac14, \infty \right\} \bigcup \left \{ \alpha:  \frac12\leq \alpha\leq 1 \right \}$ for $q=2,3,4$.}\]
		In particular,  these sets contain an open interval $(\frac12,1)$. This is in sharp contrast to the $\m{R}$ case  in which the critical index set ${\cal C}$  for the analytical well-posedness of (\ref{z-1}) in the space $H^s (\m{R})\times H^s (\m{R})$ consists of exactly four numbers: $ {\cal C}=\left  \{ -\frac{13}{12}, -\frac34, 0, \frac34 \right \}.$
		
	\end{abstract}
	
	\blfootnote{2010 Mathematics Subject Classification. 35Q53; 35G55; 35L56; 35D30; 11J72; 11J04.}
	
	\blfootnote{Key words and phrases. Well-posedness,  KdV-KdV systems, Fourier restriction spaces, Bilinear estimates, Diophantine approximation, Irrationality exponents.}
	
	%\newpage
	
	%\begin{center}
	%\tableofcontents
	%\end{center}
	
	\section{Introduction}\label{Sec, intro}
	\subsection{cKdV systems and well-posedness}
	This paper studies the Cauchy problem for the coupled KdV-KdV (cKdV) systems 
	posed  on the periodic domain $\m{T}=\m{R}/(2\pi\m{Z})$  in the following general form:
	\be\label{ckdv, general}
	\left\{\begin{array}{ll}
		\bp u_t\\v_t  \ep + A_{1}\bp u_{xxx}\\v _{xxx}\ep = A_{2}\bp uu_x\\vv_x \ep + A_{3}\bp u_{x}v\\uv_{x}\ep, & %x\in\m{T},\, t\in\m{R}, 
		\vspace{0.1in}\\
		\left.  \bp u\\v\ep\right |_{t=0} =\bp u_{0}\\v_{0}\ep,
	\end{array}\right.\ee
	where $\{A_{i}\}_{1\leq i\leq 3}$ are $2\times 2$ real  constant matrices,  $u=u(x,t)$ and  $ 
	v =v(x,t) $ are real-valued functions of the variables  $x\in\m{T}$ and $t\in\m{R}$.  It is assumed  that  there exists an invertible real matrix $M$ such that {\small $A_{1}=M\bp a_{1}& 0\\0& a_{2}\ep M^{-1}$} with $a_{1}a_2 \ne 0$.  By regarding {\small $M^{-1}\bp u\\v\ep$} as the new unknown functions (still denoted by $u$ and $v$),   it  reduces  to
	\be\label{ckdv, vector form}
	\left\{\begin{array}{ll}
		\bp u_t\\v_t  \ep + \bp a_{1} & 0 \\ 0 & a_{2}\ep\bp u_{xxx}\\v _{xxx}\ep = C\bp uu_x\\vv_x \ep + D\bp u_{x}v\\uv_{x}\ep, & %x\in\m{T},\, t\in\m{R},
		\vspace{0.1in}\\
		\left.  \bp u\\v\ep\right |_{t=0} =\bp u_{0}\\v_{0}\ep,
	\end{array}\right.\ee
	or equivalently,
	\be\label{ckdv, coef form}
	\left\{\begin{array}{rcll}
		u_t+a_{1}u_{xxx} &=& c_{11}uu_x+c_{12}vv_x+d_{11}u_{x}v+d_{12}uv_{x}, & %x\in\m{T},\, t\in\m{R},
		\vspace{0.03in}\\
		v_t+a_{2}v_{xxx} &=& c_{21}uu_x+c_{22}vv_x +d_{21}u_{x}v+d_{22}uv_{x}, & %x\in\m{T},\, t\in\m{R}, 
		\vspace{0.03in}\\
		\left. (u,v)\right |_{t=0} &=& (u_{0},v_{0}),
	\end{array}\right. \ee
	which is called in {\em divergence form}  if $d_{11}=d_{12}$ and $d_{21}=d_{22}$,  and in {\em non-divergence form} otherwise.  
	
	Listed below  are  a few specializations of (\ref{ckdv, general}) appeared in the literature.
	\begin{itemize}
		\item Majda-Biello system: 
		\be\label{M-B system}
		\left\{\begin{array}{rll}
			u_{t}+u_{xxx} &=& -vv_{x}, \\
			v_{t}+a_{2}v_{xxx} &=& -(uv)_{x}, \\
			\left. (u,v)\right |_{t=0} &=& (u_{0},v_{0}),
		\end{array}\right.\ee
		where $a_{2}\neq 0$. This system was proposed by Majda and Biello in \cite{MB03}. 
		
		\item Hirota-Satsuma system: 
		\be\label{H-S system}
		\left\{\begin{array}{rll}
			u_{t}+ a_{1}u_{xxx} &=& -6a_{1}uu_{x}+c_{12}vv_{x},   \vspace{0.03in}\\
			v_{t}+v_{xxx} &=& -3uv_{x}, \vspace{0.03in}\\
			\left. (u,v)\right |_{t=0} &=& (u_{0},v_{0}),
		\end{array}\right.\ee
		where $a_{1}\neq 0$. This system is in non-divergence form and was proposed by Hirota-Satsuma in \cite{HS81}.
		
		\item Gear-Grimshaw system: 
		\be\label{G-G system}
		\left\{\begin{array}{rcl}
			u_{t}+u_{xxx}+\sigma_{3}v_{xxx} &=&-uu_{x}+\sigma_{1}vv_{x}+\sigma_{2}(uv)_{x},\vspace{0.03in}\\
			\rho_{1}v_{t}+\rho_{2}\sigma_{3}u_{xxx}+v_{xxx}+\sigma_{4}v_{x} &=& \rho_{2}\sigma_{2}uu_{x}-vv_{x}+\rho_{2}\sigma_{1}(uv)_{x}, \vspace{0.03in}\\
			\left. (u,v)\right |_{t=0} &=& (u_{0},v_{0}),
		\end{array}\right.\ee
		where $\sigma_{i}\in\m{R}(1\leq i\leq 4)$ and $\rho_{1},\,\rho_{2}>0$. When $\sigma_4=0$, this system is a special case of (\ref{ckdv, general}) with
		\be\label{A1 in G-G system}
		A_{1}=\bp 1& \sigma_{3}\\ \frac{\rho_{2}\sigma_{3}}{\rho_{1}} & \frac{1}{\rho_{1}} \ep.\ee
		Note that $A_{1}$ in (\ref{A1 in G-G system}) is diagonalizable over $\m{R}$ for any $\sigma_{3}\in\m{R}$ and $\rho_{1},\,\rho_{2}>0$. Moreover, the eigenvalues of $A_{1}$ are nonzero unless $\rho_{2}\sigma_{3}^{2}=1$. So (\ref{G-G system}) can be reduced to the form (\ref{ckdv, coef form}) as long as $\rho_{2}\sigma_{3}^{2}\neq 1$. This system was derived by Gear-Grimshaw in \cite{GG84} (also see \cite{BPST92} for the physical explanation).
	\end{itemize}

	We are mainly concerned with the well-posedness of the  Cauchy problem for the system (\ref{ckdv, coef form}) in some  scaled Banach  spaces $ {\cal X}^s (\m{T}) $.
	
	\begin{definition}\label{Def, WP}
		For any integer $l\geq 0$,  the Cauchy problem of the system (\ref{ckdv, coef form})  is said to be $C^{l}$-locally well-posed (LWP) in the space  $ {\cal X}^s (\m{T}) $  if for any $\delta  > 0$, there exists $T>0$  such that
		\begin{itemize}
			\item[(a)]  for any $(u_0, v_0) \in  {\cal X}^s (\m{T}) $  with $\|(u_0, v_0) \|_{{\cal X}^s (\m{T}) }\leq \delta $, (\ref{ckdv, coef form}) admits a unique solution $(u,v)$ in
			the space $C([0,T]; {\cal X}^s (\m{T}))$  satisfying the auxiliary condition $ w\in \m{Y}^T_s$, where  $\m{Y}^T_s$  is an auxiliary metric space;
			\item[(b)] the corresponding  solution  map $K$ from the initial data  $(u_0, v_0)$ to the solution $ (u, v) $: $K(u_0,v_0)=  (u,v)$,  is $C^l$ smooth from $\{ (u_0, v_0):\  \| (u_0, v_0) \|_{{\cal X}^s(\m{T})} \leq \delta \}$  to $C([0, T];{\cal X}^s (\m{T}))\cap \m{Y}^T_s$.
		\end{itemize}
		Similarly, if the solution map $K$ is  real analytic   (or   uniformly continuous), then the system (\ref{ckdv, coef form})  is said to be analytically (or uniformly continuously) LWP in ${\cal X}^s(\m{T})$.
	\end{definition}
	
	In the literature, the well-posedness in Definition \ref{Def, WP} is called {\it conditional}  (cf. \cite{Kat1995, BSZ2004} and the references therein). If the uniqueness holds in the space $C([0,T]; {\cal X}^s (\m{T}))$ without the auxiliary space $\m{Y}^T_s$, then the well-posedness is said to be {\it unconditional}.  On the other hand,  the well-posedness in Definition \ref{Def, WP} is called {\it local}, but if the lifespan $T$ can be specified independently of $\delta $, then the system (\ref{ckdv, coef form}) is said to be globally well-posed (GWP). On the other hand, if the local well-posedness fails, then it is called ill-posed.
	
	\begin{definition}\label{Def, IP}
		For any integer $l\geq 0$, the Cauchy problem of the systems (\ref{ckdv, coef form}) is said to be $C^{l}$ ill-posed (IP) in the space $ {\cal X}^s (\m{T}) $  if there exists $\delta>0$ such that for any $T>0$, there does not exist a $C^{l}$ smooth solution map $K$ as defined in Definition \ref{Def, WP}. Similarly, if there does not exist a real analytic (or uniformly continuous) solution map, then (\ref{ckdv, coef form}) is said to be analytically (or uniformly continuously) IP in $ {\cal X}^s (\m{T}) $.
	\end{definition}
	
	In the rest of the paper, for the convenience of notations, the analytically LWP, GWP and IP will be written as A-LWP, A-GWP and A-IP respectively. Similarly, the uniformly continuously LWP, GWP and IP will be written as U-LWP, U-GWP and U-IP respectively.
	
	\subsection{Literature review}
	For  the single KdV equation
	\be\label{KdV eq}
	u_t+u_{xxx}+ uu_x  = 0, \qquad u(x,0)= u_0 (x)
	\ee 
	posed on either $\m{R}$ or $\m{T}$, the study of the its well-posedness began in the late 1960s with the work of Sj\"oberg  \cite{Sjob-1,Sjob-2}  and has culminated in the work of Killip and Visan\cite{KV19}.  Many significant breakthroughs were made in this study of more than fifty years (see e.g.  Bona-Smith \cite{BS75}, Kato \cite{Kat1975, Kat1979,Kat1981, Kat1983},  Constatin-Saut \cite{CS88}, Kenig-Ponce-Vega \cite{KPV89,  KPV91Indiana,KPV91JAMS,KPV93CPAM,KPV96}, Bourgain \cite{Bou93b},  Christ-Colliander-Tao \cite{CCT03}, Colliander-Keel-Staffilani-Takaoka-Tao \cite{CKSTT03, CKSTT04},  Kappeler -Topalov \cite{KT06},  Molinet \cite{Mol11, Mol12},  Tao \cite{Tao01},  Killip-Visan \cite{KV19}, to name a few).
	
	Define $H^{s}(\m{R})$ and $H^{s}(\m{T})$ as the standard Sobolev spaces on $\m{R}$ and $\m{T}$ respectively. In addition, denote $H_{0}^{s}(\m{T})$ to be the collection of the functions in $H^{s}(\m{T})$ whose mean value is 0:
	\[H_{0}^{s}(\m{T})=\Big\{f\in H^{s}(\m{T}): \int_{0}^{2\pi}f(x)\,dx=0\Big\}.\]   
	The following theorem summarizes the known results on the well-posedness of (\ref{KdV eq}).
	
	\begin{theorem}[\cite{KT06,KV19,Mol11,Mol12,KPV96,Guo09,Kis09,
			CKSTT03,CCT03,Bou93b}]
		\label{Thm, WP for KdV}\quad
		The Cauchy   problem (\ref{KdV eq}) is 
		\begin{itemize}
			\item[(1)] $C^0$-GWP in both spaces $H^s (\m{R})$  and $H^s (\m{T})$ for any $s\geq -1$, and $C^0$-IP for any $s<-1$; 
			\item[(2)] A-GWP in the space $H^s (\m{R})$ for any $s\geq -\frac34$, and U-IP for any $s< -\frac34$; 
			\item[(3)] A-GWP in the space $H^s_0 (\m{T})$ for any $s\geq -\frac12$, and U-IP for any $s< -\frac12$;
			\item[(4)] U-IP in the space $H^s (\m{T})$ for any $s\in \m{R}$.
		\end{itemize}
	\end{theorem}
	
	For the cKdV systems (\ref{ckdv, coef form}) posed on $\m{R}$, it has also been very well studied (see e.g. \cite{AC08, BPST92, Oh09, YZ1812}  and the references therein). Of particular interest is a special class of (\ref{ckdv, coef form}) in the following form
	\be\label{weak nonlin ckdv}
	\left\{\begin{array}{rcl}
		u_{t}+a_{1}u_{xxx}&=& d_1(uv)_{x},\vspace{0.03in}\\
		v_{t}+a_{2}v_{xxx}&=& d_2(uv)_{x}, \vspace{0.03in}\\
		\left. (u,v)\right |_{t=0} &=& (u_{0},v_{0}).
	\end{array}\right.\ee
	It was shown in \cite{YZ1812} that it is A-LWP in the space  $H^s(\m{R}) \times H^s(\m{R})$ for any $s\geq -\frac{13}{12}$ if $a_{1}a_{2}<0$.
	This  is rather  surprising  since even the single KdV equation (\ref{KdV eq}) is $C^0$-IP in $H^s(\m{R})$ for any $s<-1$ (see \cite{Mol11}). Similar phenomenon was also found in the coupled mKdV systems \cite{CP19}.
	
	For the cKdV (\ref{ckdv, coef form}) posed on $\m{T}$ in divergence form, its A-LWP has been thoroughly investigated by Oh \cite{Oh09}. As a specific application, \cite{Oh09} obtained sharp A-LWP results for the Majda-Biello system (\ref{M-B system}) for any $a_2\neq 0$. The most interesting case is when $a_2\in(0,4]\setminus\{1\}$. In this case, (\ref{M-B system}) is A-LWP in $H_{0}^{s}(\m{T})\times H^{s} (\m{T})$ for any $s> \min\{1,\,\tilde{s}(a_2)\}$, where $\tilde{s}(a_2)\geq \frac12$ is some constant only depending on $a_2$. 
	%The index $s^{*}(a_2)\geq\frac12$ and equals to $\frac12$ for a.e. $a_{2}\in(0,4]\backslash\{1\}$.
	On the other hand, (\ref{M-B system}) is $C^{3}$-IP if $s<\min\{1,\,\tilde{s}(a_2)\}$. So the regularity threshold for the A-LWP of (\ref{M-B system}) in the space $H_{0}^{s}(\m{T})\times H^{s} (\m{T})$ is $\min\{1,\tilde{s}(a_2)\}$.
	
	By contrast, the study for the cKdV (\ref{ckdv, coef form}) posed on $\m{T}$ in non-divergence form is far from complete. Angulo \cite{Ang04,Ang05} studied the Hirota-Satsuma system (\ref{H-S system}) and obtained the following results. When $a_{1}\neq 0,1$, (\ref{H-S system}) is A-LWP in $H^{s}_0(\m{T})\times H^{s}_0(\m{T})$ for any $s\geq 1$ and when $a_1=1$, (\ref{H-S system}) is A-LWP in $L^{2}_0(\m{T})\times H^{1}_0(\m{T})$. 
	\footnote{Actually, the mean-zero condition on $v$ is not needed, although it is needed on $u$. In fact, the mean-zero condition on $v$ is not applicable since the Hirota-Satsuma system (\ref{H-S system}) does not preserve the mean value of $v$. As a result, it makes more sense to adjust the spaces $H^{s}_0(\m{T})\times H^{s}_0(\m{T})$ and $L^{2}_0(\m{T})\times H^{1}_0(\m{T})$ in \cite{Ang04, Ang05} to be $H^{s}_0(\m{T})\times H^{s}(\m{T})$ and $L^{2}_0(\m{T})\times H^{1}(\m{T})$ respectively.}
	Comparing to the known results on the cKdV systems in divergence form, there still exist a lot of room to explore and to optimize the regularity thresholds for the non-divergence cases.

	%So the main purpose of this paper is to explore the sharp regularity thresholds for the cKdV (\ref{ckdv, coef form}) in the non-divergence form. In particular, we will investigate the Hirota-Satsuma system (\ref{H-S system}) and improve the LWP results in \cite{Ang04,Ang05}. Meanwhile, by connecting the current problem to the concept of the irrationality exponent, we will also simplify and unify some results in \cite{Oh09}.

	\subsection{Bilinear estimates}
	In Theorem \ref{Thm, WP for KdV}, the optimal regularity index -1 for either the $\m{R}$ case or the $\m{T}$ case was obtained by the inverse scattering method \cite{KT06, KV19}. This method relies on the complete integrability of the single KdV equation and usually leads to $C^0$ well-posedness. Another method, called the bilinear estimate approach, does not require the complete integrability property and usually gives the sharp analytical well-posedness. For example, by using this method, \cite{KPV96} discovered the best regularity index for the A-LWP in the $\m{R}$ case (and the $\m{T}$ case resp.) to be $-\frac34$ (and $-\frac12$ resp.). 
	
	Since most cKdV systems do not possess the complete integrability property, the bilinear estimate approach seems to be the most powerful tool to use. This method is based on the 
	Fourier restriction spaces $X^{\a}_{s,b}$ which was first introduced by Bourgain in \cite{Bou93b}.
	
	\begin{definition}[\cite{Bou93b, KPV96}]\label{Def, FR space-classical}
		Let $G=\m{R}$ (or $\m{T}$) and denote $G^{*}=\m{R}$ (or $\m{Z}$) to be the dual group of $G$. For any $\a,\,s,\,b\in\m{R}$ with $\a\neq 0$, the Fourier restriction space $X^{\a}_{s,b}(G\times\m{R})$ is defined to be the completion of the Schwartz space $\mathscr{S}(G\times\m{R})$ with respect to the norm
		\be\label{FR norm-classical}
		\| w\|_{X^{\a}_{s,b}(G\times\m{R})}:=\|\la k\ra ^{s}\la\tau-\a k^3\ra ^{b}\wh{w}(k,\tau)\|_{L^{2}(G^{*}\times\m{R})},\ee
		where $\la\cdot\ra:=1+|\cdot|$ and $\wh{w}$ refers to the space-time Fourier transform of $w$.
	\end{definition}
	
	Then the following is the so-called bilinear estimate for the KdV equation (\ref{KdV eq}).
	
	\be\label{class bilin}
	\|\p_{x}(uv)\|_{X^{1}_{s,b-1}}\ls \| u\|_{X^{1}_{s,b}}\| v\|_{X^{1}_{s,b}}, \quad\forall\, u,v\in X^{1}_{s,b},\ee
	where in the $\m{T}$ case, $u$ and $v$ are also required to have zero means, i.e. $u(\cdot,t),\,v(\cdot,t)\in H^{s}_{0}(\m{T})$ for any $t\in\m{R}$.
	
	Bourgain \cite{Bou93b} first proved (\ref{class bilin}) for $s=0$ and $b=\frac12$ in both $\m{R}$ and $\m{T}$  cases, which lead to the A-LWP of (\ref{KdV eq}) in $H^{s}(\m{R})$ or in $H^{s}_0(\m{T})$ for any $s\geq 0$. Later, Kenig, Ponce and Vega\cite{KPV96} optimized this estimate.
	\begin{lemma}[\cite{KPV96}]
		\label{Lemma, kpv}\quad
		\begin{itemize}
			\item In the $\m{R}$ case, (\ref{class bilin}) holds for any $s>-\frac34$ with some $b=b(s)>\frac12$, but fails for any $s<-\frac34$ and $b\in\m{R}$.
			
			\item In the $\m{T}$ case, (\ref{class bilin}) holds for any $s\geq -\frac12$ with $b=\frac12$, but fails if $s<-\frac12$ or $b\neq \frac12$.
		\end{itemize}
	\end{lemma}
	
	For the cKdV systems (\ref{ckdv, coef form}), the situations are more complicated since four types of bilinear estimates, (\ref{div 1})--(\ref{nondiv 2}), need to be studied.
	
	Divergence forms:
	\begin{eqnarray}
		&\text{(D1):} &\quad \|\p_{x}(w_{1}w_{2})\|_{X^{\a_2}_{s,b-1}}\ls \|w_{1}\|_{X^{\a_1}_{s,b}}\|w_{2}\|_{X^{\a_1}_{s,b}}, \label{div 1} \\
		&\text{(D2):} &\quad \|\p_{x}(w_{1}w_{2})\|_{X^{\a_1}_{s,b-1}}\ls \|w_{1}\|_{X^{\a_1}_{s,b}}\|w_{2}\|_{X^{\a_2}_{s,b}}, \label{div 2}
	\end{eqnarray}
	
	Non-divergence forms:
	\begin{eqnarray}
		&\text{(ND1):} &\quad
		\|(\p_{x}w_{1})w_{2}\|_{X^{\a_1}_{s,b-1}}\ls \| w_{1}\|_{X^{\a_1}_{s,b}}\|w_{2}\|_{X^{\a_2}_{s,b}}, \label{nondiv 1}\\
		&\text{(ND2):} &\quad
		\|w_{1}(\p_{x}w_{2})\|_{X^{\a_1}_{s,b-1}}\ls \| w_{1}\|_{X^{\a_1}_{s,b}}\|w_{2}\|_{X^{\a_2}_{s,b}}. \label{nondiv 2}
	\end{eqnarray}
	where $w_1$ (or $w_2$) refers to $u$ or $v$, and $\a_1$ (or $\a_2$) represents $a_1$ or $a_2$ in (\ref{ckdv, coef form}). 
	\footnote{
		Strictly speaking, in the $\m{T}$ case, the above bilinear estimates need to be considered in more complicated forms and some zero-mean conditions should be imposed on $w_1$ and $w_2$. But in order to illustrate the idea more clearly, we drop these technical details and just take the forms which are consistent with the $\m{R}$ case. For precise bilinear estimates in the $\m{T}$ case, please see Corollary \ref{Cor, bilin est, div}, Theorem \ref{Thm, bilin est, non-div} and Section \ref{Sec, bilin est, non-div}.
	}
	There are various ways to write out these estimates, what we adopted here is to fix the first term on the right hand side to be $\|w_1\|_{X^{\a_1}_{s,b}}$. 
	
	In the $\m{R}$ case, all the sharp regularity indices have been found for the bilinear estimates (\ref{div 1})--(\ref{nondiv 2}). 
	
	\begin{lemma}[\cite{AC08, Oh09, YZ1812}]
		\label{Lemma, sharp bilin-R}
		Let $\a_1,\a_2\in\m{R}\setminus\{0\}$ and denote $r=\frac{\a_2}{\a_1}$. Then for any bilinear estimate among (\ref{div 1})--(\ref{nondiv 2}), there exists a critical index $s^{*}$, as shown in Table \ref{Table, bilin est on R}, such that this bilinear estimate holds for any $s>s^{*}$ with some $b=b(s)>\frac12$, but fails for any $s<s^{*}$ and $b\in\m{R}$.
		
		\begin{table}[!ht]
			\renewcommand\arraystretch{1.8}
			\begin{center}
				\begin{tabular}{|c|c|c|c|c|c|} \hline
					$s^{*}$   & $r<0$    &  $0<r<\frac{1}{4}$ & $r=\frac{1}{4}$  & $r>\frac{1}{4}$, $r\neq 1$ & $r=1$  \\ \hline
					(D1) & $ -\frac{3}{4}$ &  $ -\frac{3}{4}$ & $ \frac{3}{4}$  &  $ 0$ & $ -\frac{3}{4}$  \\\hline
					(D2) & $ -\frac{13}{12}$  &  $ -\frac{3}{4}$  & $ \frac{3}{4}$ &  $ 0$ & $ -\frac{3}{4}$       \\\hline
					(ND1) &  $ -\frac{3}{4}$ &  $ -\frac{3}{4}$ & $  \frac{3}{4}$ & $0$ &  $ 0$   \\\hline
					(ND2) &  $ -\frac{3}{4}$  & $ -\frac{3}{4}$  & $ \frac{3}{4}$ &  $ 0$  & $ 0$   \\\hline
				\end{tabular}	 
			\end{center} 
			\caption{Sharp bilinear estimates on $\m{R}$ ($r=\frac{\a_2}{\a_1}$)}
			\label{Table, bilin est on R}
		\end{table}
	\end{lemma}
	
	From the above table, we can see there are four critical indices $\big\{-\frac{13}{12},\,-\frac{3}{4},\,0,\,\frac34\big\}$ for the bilinear estimates associated to the cKdV systems posed on $\m{R}$.
	
	In the $\m{T}$ case, Oh\cite{Oh09} studied the Majda-Biello system\cite{MB03} whose associated bilinear estimates are (\ref{M-B, bilin, d1}) and (\ref{M-B, bilin, d2}).
	\begin{eqnarray}
		\|\p_{x}(v^2)\|_{X^{1}_{s,b-1}} &\ls & \| v\|_{X^{a_2}_{s,b}}\| v\|_{X^{a_2}_{s,b}}, \label{M-B, bilin, d1}\\
		\|\p_{x}(vu)\|_{X^{a_2}_{s,b-1}} &\ls & \| v\|_{X^{a_2}_{s,b}} \| u\|_{X^{1}_{s,b}}. \label{M-B, bilin, d2}
	\end{eqnarray}
	Note that the above two estimates are special (essentially equivalent) cases of (\ref{div 1}) and (\ref{div 2}). In fact,
	\begin{itemize}
		\item (\ref{M-B, bilin, d1}) corresponds to (\ref{div 1}) by setting $w_1=w_2=v$, $\a_1=a_2$ and $\a_2=1$.
		
		\item (\ref{M-B, bilin, d2}) corresponds to (\ref{div 2}) by setting $w_1=v$, $w_2=u$, $\a_1=a_2$, and $\a_2=1$.
	\end{itemize}
	In both cases, the ratio between $\a_2$ and $\a_1$ is $r=\frac{1}{a_2}$. The most interesting case is when $a_2\in(0,4]\setminus\{1\}$, i.e. when $r\in\big[\frac14,\infty\big)\setminus\{1\}$. In this case, the key technical issue is to know how close the rational numbers can approximate a given real number. Oh\cite{Oh09} adopted the so-called {\it minimal type index} $\nu(\rho)$ for any real number $\rho$ (see Definition \ref{Def, MTI}) to capture this approximation. Let $c_1,c_2$ be the roots of the quadratic equation $3a_2 x^2-3a_2 x+a_2-1=0$ and let $d_1,d_2$ be the roots of the quadratic equation $(1-a_2)x^2+3a_2 x-3a_2=0$. More specifically, let
	\be\label{c and d in Oh-paper}\begin{split}
		c_1=\frac12-\frac{1}{6}\sqrt{\frac{12}{a_2}-3}, &\quad c_2=\frac12+\frac{1}{6}\sqrt{\frac{12}{a_2}-3}, \\
		&\\
		d_{1}=\frac{-3a_2-\sqrt{3a_2(4-a_2)}}{2(1-a_2)}, &\quad d_2=\frac{-3a_2+\sqrt{3a_2(4-a_2)}}{2(1-a_2)}.
	\end{split}\ee
	Denote 
	\be\label{Oh index}
	\nu_c=\max\{\nu(c_1),\nu(c_2)\}, \quad \nu_{d}=\max\{\nu(d_1),\,\nu(d_2)\}.\ee
	\begin{lemma}[\cite{Oh09}]
		\label{Lemma, sharp bilin-T by Oh}
		Let $a_2\in (0,4]\setminus\{1\}$. Define $\nu_c$ and $\nu_d$ as in (\ref{Oh index}). Then for the bilinear estimate (\ref{M-B, bilin, d1}) or (\ref{M-B, bilin, d2}), there exists a critical index $s^{*}$, as shown in Table \ref{Table, Oh-bilin est on T}, such that this bilinear estimate holds for any $s>s^{*}$ with $b=\frac12$, but fails for any $s<s^{*}$ and $b\in\m{R}$.
		\begin{table}[!ht]
			\renewcommand\arraystretch{1.8}
			\begin{center}
				\begin{tabular}{|c|c|} \hline
					$s^{*}$  & $a_2\in (0,4]\setminus\{1\}$  \\ \hline
					(\ref{M-B, bilin, d1}) &  $ \min\{1,\frac12+\frac12\nu_c\}$   \\\hline
					(\ref{M-B, bilin, d2}) &  $ \min\{1,\frac12+\frac12\nu_d\}$       \\\hline
				\end{tabular}	 
			\end{center} 
			\caption{Sharp bilinear estimates on $\m{T}$ by Oh \cite{Oh09}}
			\label{Table, Oh-bilin est on T}
		\end{table}
	\end{lemma}
	
	Based on this discovery, Oh further concludes the sharp A-LWP for the Majda-Biello system (\ref{M-B system}).
	\begin{theorem}[\cite{Oh09}]
		\label{Thm, Oh-M-B}
		Let $a_2\in (0,4]\setminus\{1\}$. Define 
		\[s^{*}_{1}=\min\Big\{1,\, \frac12+\frac12\max\{\nu_{c},\nu_{d}\} \Big\}.\]
		Then (\ref{M-B system}) is A-LWP for any $s>s^{*}_{1}$, but $C^3$-IP for any $s<s^{*}_{1}$, in the space $H^{s}_{0}(\m{T})\times H^{s}(\m{T})$.
	\end{theorem}
	
	\subsection{Weakly analytical ill-posedness}
	For the single KdV equation (\ref{KdV eq}), it follows from Theorem \ref{Thm, WP for KdV} and Lemma \ref{Lemma, kpv} that the critical indices for the bilinear estimate (\ref{class bilin}) match those for the A-LWP of (\ref{KdV eq}) in both $\m{R}$ and $\m{T}$ cases. For Majda-Biello system (\ref{M-B system}), we can also see from Lemma \ref{Lemma, sharp bilin-T by Oh} and Theorem \ref{Thm, Oh-M-B} that (\ref{M-B system}) is A-LWP if and only if both the corresponding bilinear estimates (\ref{M-B, bilin, d1}) and (\ref{M-B, bilin, d2}) hold. 
	
	Inspired by these observations, it is conjectured that for any cKdV system (\ref{ckdv, coef form}), its regularity threshold for the A-LWP is equivalent to that for the associating bilinear estimates. In fact, by the standard argument in \cite{Bou93b,KPV96,CKSTT03}, once the associated bilinear estimates are justified for some index $s$, then the A-LWP can also be established for the same $s$. In other words, if a cKdV system is A-IP, then at least one of the corresponding bilinear estimates must fail. But whether the failure of the bilinear estimates implies the A-IP is not known in general. This motivates the following definition.
	
	\begin{definition}\label{Def, weak IP}
		The Cauchy problem of the single KdV equation (\ref{KdV eq}) is said to be weakly A-IP if the bilinear estimate (\ref{class bilin}) fails. Similarly, the cKdV systems (\ref{ckdv, coef form}) are said to be weakly A-IP if at least one of the corresponding bilinear estimates fails.
	\end{definition}
	
	Based on the above definition, the aforementioned conjecture is translated to the equivalence between A-IP and weakly A-IP. 
	
	\begin{conjecture}\label{Conj, A-IP same as W-IP}
		For any cKdV system (\ref{ckdv, coef form}), it is A-IP if and only if it is weakly A-IP.
	\end{conjecture}
	
	Since A-IP always implies weakly A-IP, the conjecture further reduces to ``weakly A-IP impies A-IP". For the single KdV and the Majda-Biello system, this conjecture has been confirmed. But for some special cKdV, say (\ref{weak nonlin ckdv}) posed on $\m{R}$, although the critical index for the corresponding bilinear estimate has been found to be $-\frac{13}{12}$ in Lemma \ref{Lemma, sharp bilin-R}, it is still unknown if it is A-IP in $H^{s}(\m{R})\times H^{s}(\m{R})$ for any $s<-\frac{13}{12}$.

	\subsection{Critical index set}
	Consider the cKdV systems (\ref{ckdv, coef form}) posed on $\m{R}$. Denote $\mcal{H}^{s}(\m{R})=H^{s}(\m{R})\times H^{s}(\m{R})$.
	
	\begin{definition}\label{Def, cis for ckdv on R}
		For any cKdV system (\ref{ckdv, coef form}) posed on $\m{R}$ with $a_1a_2\neq 0$ and either $C=(c_{ij})$ or $D=(d_{ij})$ does not vanish. 
		\begin{itemize}
			\item If there exists $s^{*}\in\m{R}$, depending on $a_1$, $a_2$, $C$ and $D$, such that (\ref{ckdv, coef form}) is A-LWP in the space $\mcal{H}^{s}(\m{R})$ for any $s>s^{*}$, but (weakly) A-IP for any $s<s^{*}$ in $\mcal{H}^{s}(\m{R})$, then this $s^{*}$ is called the (weakly) analytically critical index. If there does not exist such a critical index, then we define $s^{*}=\infty$.
			
			\item Regarding the above critical index $s^{*}$ as a map: $s^{*}={\cal G} (a_1,a_2, C, D)$, then the range of this map, denoted as $\mcal{C}$, is called the (weakly) analytically critical index set of (\ref{ckdv, coef form}) in $\mcal{H}^{s}(\m{R})$.
		\end{itemize}
	\end{definition}
	
	In the following, we will write "analytically critical index`` to be "A-critical index``. Based on Definition \ref{Def, cis for ckdv on R}, it follows from Lemma \ref{Lemma, sharp bilin-R} that the weakly A-critical index set of (\ref{ckdv, coef form}) in $\mcal{H}^{s}(\m{R})$ is 
	\[\mbox{$\mcal{C}=\big\{-\frac{13}{12},\,-\frac34,\,0,\,\frac34\big\}.$}\]
	Now we consider the following (\ref{xkdv}) which is a variant of the single KdV equation. 
	\begin{equation}\label{xkdv}
		u_t + a u_{xxx} = c uu_x, \qquad u(x,0)=u_0(x), \qquad ac\ne 0.  
	\end{equation}
	Similar to Definition \ref{Def, cis for ckdv on R}, we can define the A-critical index set of (\ref{xkdv}) posed on $\m{R}$ or $\m{T}$. For any $l\geq 0$, we can also define the $C^{l}$-critical index set analogously. By some simple scaling computation, one can see that the values of $a$ and $c$ in (\ref{xkdv}) do not affect its well-posedness. So it follows from Theorem \ref{Thm, WP for KdV} that 
	\begin{itemize}   
		\item The $C^{0}$-critical index set of (\ref{xkdv}) in $H^s (\m{R})$ or $H^s (\m{T})$ is $\{ -1\}$.
		\item The A-critical index set of (\ref{xkdv}) is $\big\{ -\frac34\big\}$ in the space $H^s (\m{R})$, $\big\{-\frac12 \big\}$ in the space $H^s_0 (\m{T})$, and $\{ \infty \}$ in the space $H^s (\m{T})$.
	\end{itemize}
	
	Two observations can be drawn from the above results. Firstly, unlike (\ref{xkdv}), the coefficients of the cKdV systems (\ref{ckdv, coef form}) do have an effect on the A-critical index. Secondly, if the problem is posed on $\m{T}$, then the critical index can be different if the underlying spaces are different (e.g. $H^{s}_{0}(\m{T})$ cf. $H^{s}(\m{T})$). 
	
	The main purpose of this paper is to investigate the weakly A-critical index set of the cKdV systems (\ref{ckdv, coef form}) posed on $\m{T}$. We will consider the following four spaces:
	\be\label{spaces}
	\mcal{H}^{s}_{1}:=H^{s}_{0}(\m{T})\times H^{s}_{0}(\m{T}),\;\; \mcal{H}^{s}_{2}:= H^{s}_{0}(\m{T})\times H^{s}(\m{T}),\;\; \mcal{H}^{s}_{3}:= H^{s}(\m{T})\times H^{s}_{0}(\m{T}),\;\; \mcal{H}^{s}_{4}:= H^{s}(\m{T})\times H^{s}(\m{T}).\ee
	
	\begin{definition}\label{Def, cis for ckdv on T}
		Let $k\in\{1,2,3,4\}$. We define $\mcal{C}_{k}$ to be the weakly A-critical index set of the cKdV systems (\ref{ckdv, coef form}) in the space $\mcal{H}^{s}_{k}$.
	\end{definition}
	
	From the results in Oh\cite{Oh09} on the Majda-Biello system (\ref{M-B system}), the weakly A-critical index $s^{*}(a_2)$ in the space $\mcal{H}^{s}_{2}$ is 
	\[s^{*}(a_2)=\left\{\begin{array}{cll}
		-\frac12 &\text{if} & a_2\in(-\infty,0)\cup (4,\infty),\\
		\infty &\text{if} & a_2=1, \\
		\min\{1,\tilde{s}(a_2)\} &\text{if} & a_2\in(0,4]\backslash\{1\}.
	\end{array}\right.\]
	where $\tilde{s}(a_2)\geq \frac12$ is some constant only depending on $a_2$. So the weakly A-critical index set for (\ref{M-B system}) in the space $\mcal{H}^{s}_2$ is 
	\[\mbox{$\big\{-\frac12,\infty\big\}\bigcup \Big\{\min\{1,\tilde{s}(a_2)\}:a_2\in(0,4]\setminus\{1\}\Big\}.$}\] 
	But what is the precise range of $\tilde{s}(a_2)$ over the region $(0,4]\backslash\{1\}$? This question was not answered in \cite{Oh09}, we will find an answer in this paper.
	
	\subsection{Irrationality exponent}
	In order to find out the critical index set for (\ref{ckdv, coef form}) posed on $\m{T}$, as we mentioned before, it is crucial to estimate how well the real number can be approximated by rational functions. In the number theory, such estimate is called the Diophantine approximation. 
	\footnote{The interested readers are referred to S. Lang \cite{Lan95} and Y.  Bugeaud \cite{Bug04}  for a nice introduction to this subject.
	}
	One standard characterization is via the irrationality exponent.
	
	\begin{definition}[see e.g. \cite{BBS16}]\label{Def, IE}
		A real number $\rho$  is said to be   approximable with power $\mu $ if the inequality
		\be\label{approx power}
		0<\Big| \rho-\frac{m}{n}\Big|<\frac{1}{|n|^{\mu}}\ee
		holds for infinitely many $(m,n)\in\m{Z}\times\m{Z}^*$,
		and 
		\be\label{E}
		\mu(\rho):=\sup\{\mu \in\m{R}: \text{$\rho$ is approximable with power $\mu$}\}\ee
		is called the irrationality exponent  of $\rho$.
	\end{definition}	
	
	%A real number $\rho$ is called a Liouville number if $\mu(\rho)=\infty$, one example of the Liouville numbers is the Liouville's constant $\rho:=\sum\limits_{n=1}^{\infty}\frac{1}{10^{n!}}$.  \footnote{ In 1844, Joseph Liouville showed that all Liouville numbers are transcendental, thus established the existence of transcendental numbers for the first time.} 
	
	For any rational number $\rho$,  $\mu(\rho)=1$. For any irrational number  $\rho$, $\mu(\rho)\geq 2$, but exact value of $\mu (\rho)$ is difficult to find  in general. The basic properties of $\mu(\rho)$ are collected in Proposition \ref{Prop, IE}. Recalling the minimal type index $\nu(\rho)$ in Oh's paper \cite{Oh09}, it is closely related to $\mu(\rho)$ but is defined slightly different.
	
	\begin{definition}[see e.g. \cite{Arn88, Oh09}]\label{Def, MTI}
		A real number $\rho$ is said to be  of type $\nu$ if there exists a positive constant $K=K(\rho,\nu)$ such that  the inequality
		\be\label{type nu}
		\Big|\rho-\frac{m}{n}\Big|\geq \frac{K}{|n|^{2+\nu}}\ee
		holds for any $(m,n)\in\m{Z}\times\m{Z}^{*}$, and 
		%Moreover, its minimal type index $\nu(\rho)$ is defined by 
		\be\label{MTI}
		\nu(\rho):=\inf\{\nu\in\m{R}:\text{$\rho$ is of type $\nu$}\}\ee
		is called the  minimal type index  of $\rho$, 
		where the infimum is understood as $\infty$ if the set $\{\nu\in\m{R}:\text{$\rho$ is of type $\nu$}\}$ is empty.
	\end{definition}
	
	If $\rho\in\m{Q}$, then it is easy to see $\nu(\rho)=\infty$ and $\mu(\rho)=1$. If $\rho\in\m{R}\setminus\m{Q}$, then it will be shown in Proposition \ref{Prop, IE and MTI relation} that $\nu(\rho)=\mu(\rho)-2$. 
	
	\subsection{Main results}
	Recall the irrationality exponent $\mu(\cdot)$ in Definition \ref{Def, IE}. For $r\geq \frac{1}{4}$,  define 
	\be\label{sharp index by IE}
	\sigma_{r}=\mu(\sqrt{12r-3})\quad\text{and}\quad s_{r}=\left\{\begin{array}{cll}
		1 & \text{if} & \sigma_{r}=1 \text{\; or \;} \sigma_{r}\geq 3,\\
		\dfrac{\sigma_{r}-1}{2} & \text{if} & 2\leq \sigma_{r}<3.
	\end{array}\right.\ee
	The basic properties of $s_{r}$ are collected in Proposition \ref{Prop, s_r}.  
	
	For the Majda-Biello system (\ref{M-B system}) with $a_2\in(0,4]\setminus\{1\}$,  Oh\cite{Oh09} showed the critical regularity indexes for the associated bilinear estimates to be $\min\big\{1,\frac12+\frac12\nu_{c}\big\}$ and $\min\big\{1,\frac12+\frac12\nu_{d}\big\}$, see Table \ref{Table, Oh-bilin est on T}. The first main result of this paper is to demonstrate these two indexes are actually the same.
	
	\begin{theorem}\label{Thm, same index in Oh-paper}
		Let $a_2\in(0,4]\setminus\{1\}$ and define $\nu_{c}$ and $\nu_{d}$ as in (\ref{Oh index}). Then $\nu_{c}=\nu_{d}$ and 
		\be\label{simple index}
		\min\Big\{1,\frac12+\frac12 \nu_{c}\Big\}=s_{\frac{1}{a_2}},
		\ee
		where $s_{\frac{1}{a_2}}$ is defined as in (\ref{sharp index by IE}).
	\end{theorem} 
	
	The key observation in the proof of this theorem is the invariance of the irrationality exponent under the reciprocal operation, that is $\mu(\rho)=\mu(1/\rho)$, see Proposition \ref{Prop, inv of IE}. 
	
	Combining Theorem \ref{Thm, same index in Oh-paper} with Lemma \ref{Lemma, sharp bilin-T by Oh} (also see Oh\cite{Oh09}), we are able to write out the sharp regularity indexes for the bilinear estimates (\ref{div 1}) and (\ref{div 2}) in a more unified way.
	
	\begin{corollary}\label{Cor, bilin est, div}
		Let $\a_1,\a_2\in\m{R}\setminus\{0\}$ and denote $r=\frac{\a_2}{\a_1}$. Define $s_{r}$ as in (\ref{sharp index by IE}) for $r\geq \frac14$. Then for the bilinear estimate (\ref{div 1}) or (\ref{div 2}), there exists a critical index $s^{*}$, as shown in Table \ref{Table, bilin est on T, div}, such that this bilinear estimate holds for any $s>s^{*}$ with $b=\frac12$, but fails for any $s<s^{*}$ and $b\in\m{R}$.
		
		\begin{table}[!ht]
			\renewcommand\arraystretch{1.2}
			\begin{center}
				\begin{tabular}{|c|c|c|c|} \hline
					$s^{*}$  & $r<\frac14, r\neq 0$ & $r\geq \frac14$, $r\neq 1$ & $r=1$ ($\wh{w_1}(0,\cdot)=\wh{w_2}(0,\cdot)=0$) \\ \hline
					(D1) & $ -\frac12$  &  $\min\{1,s_r\}$ &  $ -\frac12$    \\\hline
					(D2) with $\wh{w_2}(0,\cdot)=0$ & $ -\frac12$ &  $\min\{1,s_r\}$  & $ -\frac12$   \\\hline
				\end{tabular}	 
			\end{center} 
			\caption{Sharp bilinear estimates on $\m{T}$ in divergence form ($r=\frac{\a_2}{\a_1}$)}
			\label{Table, bilin est on T, div}
		\end{table}
	\end{corollary}
	Let $\mcal{U}$  be the range of $s_{r}$: ${ \cal U}=\left  \{ s_{r}:  \frac14\leq r < \infty \right \}$. Then it will be shown in Proposition \ref{Prop, s_r} that $\mcal{U}=\big[\frac12,1\big]$. As a result, we obtain the following conclusion (Recall that $H^{s}_{2}:=H^{s}_{0}(\m{T})\times H^{s}(\m{T})$ in (\ref{spaces})).
	
	\begin{corollary}
		For the Majda-Biello systems (\ref{M-B system}) posed on $\m{T}$ with $a_2\neq 0$, its weakly A-critical index set in the space $H^{s}_{2}$ is 
		$\big\{-\frac12,\infty\big\} \bigcup \big[\frac12,1\big]$.
	\end{corollary}
	
	Next, we study the cKdV systems (\ref{ckdv, coef form}) in non-divergence form. The main result is the following sharp bilinear estimates. 
	
	\begin{theorem}\label{Thm, bilin est, non-div}
		Let $\a_1,\a_2\in\m{R}\setminus\{0\}$ and denote $r=\frac{\a_2}{\a_1}$. Define $s_{r}$ as in (\ref{sharp index by IE}) for $r\geq \frac14$. Then for the bilinear estimate (\ref{nondiv 1}) or (\ref{nondiv 2}), there exists a critical index $s^{*}$, as shown in Table \ref{Table, bilin est on T, nondiv}, such that this bilinear estimate holds for any $s>s^{*}$ with $b=\frac12$, but fails for any $s<s^{*}$ and $b\in\m{R}$.
		
		\begin{table}[!ht]
			\renewcommand\arraystretch{1.2}
			\begin{center}
				\begin{tabular}{|c|c|c|c|} \hline
					$s^{*}$  & $r<\frac14, r\neq 0$ & $r\geq \frac14$, $r\neq 1$ & $r=1$ \\ \hline
					(ND1) ($\wh{w_2}(0,\cdot)=0$) & $ -\frac14$  &  $\min\{1,s_r\}$ &  $ \frac12$    \\\hline
					(ND2)& $ -\frac14$ &  $\min\{1,s_r\}$  & $ \frac12$ ($\wh{w_1}(0,\cdot)=0$) \\\hline
				\end{tabular}	 
			\end{center} 
			\caption{Sharp bilinear estimates on $\m{T}$ in non-divergence form ($r=\frac{\a_2}{\a_1}$)}
			\label{Table, bilin est on T, nondiv}
		\end{table}
	\end{theorem}
	
	As a corollary, consider the Hirota-Satusma systems (\ref{H-S system}). 
	\begin{corollary}
		For the Hirota-Satsuma systems (\ref{H-S system}) posed on $\m{T}$ with $a_1\neq 0$ and $c_{12}\in\m{R}$, its weakly A-critical index set in the space $H^{s}_{2}$ is 
		$\big\{-\frac14,\infty\big\} \bigcup\big[\frac12,1\big]$.
	\end{corollary}
	The more detailed well-posedness results on the Hirota-Satsuma system are shown in the appendix.
	Based on the results in Table \ref{Table, bilin est on T, div} and \ref{Table, bilin est on T, nondiv}, we can actually study the well-posedness of cKdV systems (\ref{ckdv, coef form}) in the space $\mcal{H}^{s}_{k}$ for any $k\in\{1,2,3,4\}$. The following is a summary of their weakly A-critical index sets.

	\begin{theorem}\label{Thm, critical index set}  
		The weakly A-critical index sets $\mcal{C}_{k}\,(k=1,2,3,4)$ in Definition \ref{Def, cis for ckdv on T}  are
		\be\label{critical index sets} 
		\mbox{${\cal C}_1= \big \{ -\frac12, \infty  \big \} \bigcup \big[\frac12,1\big] \quad\text{and}\quad {\cal C}_q =\big  \{ -\frac12, -\frac14, \infty \big  \} \bigcup  \big[\frac12,1\big], \ q=2,3,4 .$ }\ee
	\end{theorem} 
	
	Fix any $k\in \{ 1,2,3,4\}$. The above result enables us to provide a complete classification of the cKdV systems (\ref{ckdv, coef form}) in $H^{s}_{k}$.
	
	\begin{theorem}[Classification of the systems (\ref{ckdv, coef form})]  \label{Thm, classify}   
		Assume $a_1a_2 \ne 0$ and either $C$ or $D$ does not vanish. Fix $k\in \{ 1,2,3,4\} $ and define $\mcal{H}^s_k$ as in (\ref{spaces}).   Then the systems (\ref{ckdv, coef form}) are completely classified into a family of  classes, each of which corresponds to a unique index $s^{*}\in {\cal C}_k$  such that any system in this class is A-LWP in the space  $ {\cal H}^s_k$  if $s>s^*$ while it is weakly A-IP if $s<s^{*}$.
	\end{theorem}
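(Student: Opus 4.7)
My plan is to derive this classification theorem as an immediate consequence of the two preceding results, Theorem \ref{Thm, sharp analytical WP} (Sharp analytical well-posedness) and Theorem \ref{Thm, critical index set} (Critical index sets). No new analytic input is needed at this stage; the content of the statement is an organizational one, and the proof will amount to packaging the outputs of those two theorems into a partition of the parameter space.

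First I would observe that Theorem \ref{Thm, sharp analytical WP} assigns to every admissible system (\ref{ckdv, coef form}) a unique critical index $s^{*}_{k}\in(-\infty,+\infty]$, which gives a well-defined map $\mcal{G}_{k}(\theta,C,D)$ from the space of coefficients (with $B=0$, $a_1a_2\neq0$, and either $C$ or $D$ nonzero) into $(-\infty,+\infty]$. This map induces an equivalence relation on the set of admissible systems, declaring two systems equivalent if and only if they share the same value of $\mcal{G}_{k}$. The equivalence classes are exactly the fibres $\mcal{G}_{k}^{-1}(s^{*})$ and each class carries a canonically attached critical index.

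Next I would invoke Theorem \ref{Thm, critical index set}, which identifies the image of $\mcal{G}_{k}$ precisely as $\mcal{C}_{k}$. This produces a bijection between the family of equivalence classes and the elements of $\mcal{C}_{k}$: for each $s^{*}\in\mcal{C}_{k}$ the corresponding class $\mcal{G}_{k}^{-1}(s^{*})$ is nonempty by construction, and these classes exhaust all admissible systems. To verify the two properties claimed for each class, fix any $s^{*}\in\mcal{C}_{k}$ and any system belonging to the class $\mcal{G}_{k}^{-1}(s^{*})$. By the very definition of $s^{*}_{k}$ in Theorem \ref{Thm, sharp analytical WP}, this system is locally analytically well-posed in $\mcal{H}^{s}_{k}$ whenever $s>s^{*}$, while the bilinear estimate that underlies the contraction-mapping argument for analytical well-posedness fails whenever $s<s^{*}$. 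Since these properties depend only on the critical index, they hold uniformly on the whole class.

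The main obstacle to the classification has therefore already been absorbed into the two prior theorems: the positive bilinear estimates (carried out in Fourier restriction spaces $X^{\alpha}_{s,b}$, with separate treatment of divergence versus non-divergence form and of the sign and arithmetic nature of the ratio $\theta=a_2/a_1$) and, on the other side, the counterexamples that identify the critical index (built from near-resonance sets whose analysis, for $\theta>0$, is controlled by the irrationality exponent $\mu(\rho_r)$ and ultimately by Jarnik's theorem, which is what fills out the interval $[\tfrac12,1]$ inside $\mcal{C}_{k}$). Once those two theorems are in place, the classification theorem follows by this bookkeeping argument, and nothing beyond citing them is required.
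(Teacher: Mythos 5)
Your proposal is correct and follows exactly the paper's own line: the paper treats Theorem \ref{Thm, classify} as a direct corollary, explicitly describing it as ``a combination of Theorem \ref{Thm, sharp analytical WP} and Theorem \ref{Thm, critical index set},'' and your fiber/equivalence-class packaging of the map $\mcal{G}_k$ together with the identification of its range as $\mcal{C}_k$ is precisely that bookkeeping.
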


	\subsection{Organization of the paper}
	The organization of the rest of the paper is as follows. 
	In Section \ref{Sec, FR space}, we will present the definitions of the Fourier restriction spaces and the resonance functions. 
	Section \ref{Sec, proof of Thm on IE} is devoted to explore properties of the irrationality exponents and prove Theorem \ref{Thm, same index in Oh-paper}.  
	Some linear estimates will be introduced in Section \ref{Sec, lin est}. 
	Theorem \ref{Thm, bilin est, non-div} will be broken into Lemma \ref{Lemma, bilin est, nondiv 1}, and \ref{Lemma, bilin est, nondiv 2} and Proposition \ref{Prop, sharp bilin est, nondiv 1} and \ref{Prop, sharp bilin est, nondiv 2} in Section \ref{Sec, bilin est, non-div}.  
	We will justify Lemma \ref{Lemma, bilin est, nondiv 1} and \ref{Lemma, bilin est, nondiv 2} in Section \ref{Sec, proof of bilin} and prove Proposition \ref{Prop, sharp bilin est, nondiv 1} and \ref{Prop, sharp bilin est, nondiv 2} in Section \ref{Sec, proof of sharp bilin}. 
	Finally, Appendix \ref{Appendix,  H-S} includes the analytical well-posedness results about the Hirota-Satsuma systems (\ref{H-S system}) which can be obtained as a corollary of Theorem \ref{Thm, bilin est, non-div}.

	\section{Fourier restriction spaces on $\m{T}$}
	\label{Sec, FR space}
	
	To study the LWP of (\ref{ckdv, coef form}), 
	we adopt the similar treatment  as in \cite{CKSTT03}  to deal with more general periodic problem posed  on $\m{T}_{\lam}=\m{R}/(2\pi\lam\m{Z})$  for $\lam\geq 1$ and  thus consider the system 
	\be\label{ckdv on T_lam, vector}
	\left\{\begin{array}{ll}
		\bp u_t\\v_t  \ep + \bp a_{1} & 0 \\ 0 & a_{2}\ep\bp u_{xxx}\\v _{xxx}\ep = C\bp uu_x\\vv_x \ep + D\bp u_{x}v\\uv_{x}\ep, & x\in\m{T_{\lam}},\, t\in\m{R}, \vspace{0.1in}\\
		\left.  \bp u\\v\ep\right |_{t=0} =\bp u_{0}\\v_{0}\ep\in H^{s}(\m{T}_{\lam})\times H^{s}(\m{T}_{\lam}).
	\end{array}\right.\ee
	We use $\mcal{F}_x$, $\mcal{F}_t$ and $\mcal{F}$ to denote the spatial, temporal and space-time Fourier transform respectively. However, when there is no confusion, we simply use $\,\hat{}\,$ to denote any of these three types of Fourier transforms. On the other hand, $\mcal{F}_x^{-1}$, $\mcal{F}_t^{-1}$ and $\mcal{F}^{-1}$ represent the corresponding inverse Fourier transforms.
	
	The temporal Fourier transform and its inverse are defined standardly. The definitions of the spatial Fourier transform and its inverse are more complicated. Denote the frequency space corresponding to $\m{T}_{\lam}$  by 
	$\m{Z}_{\lam}=\big\{k \,\big|\, k=n/\lam\,\,\, \text{for some $n\in\m{Z}$}\big\}$. The normalized counting measure $dk^{\lam}$ on $\m{Z}_{\lam}$ is defined by 
	\be\label{measure}
	\int_{\m{Z}_{\lam}}a(k)dk^{\lam}=\frac{1}{\lam}\sum_{k\in\m{Z}_{\lam}}a(k).\ee
	For any function $f(x)$ on $\m{T}_{\lam}$, 
	\be\label{FT, spatial}
	(\mcal{F}_{x}f)(k):=\int_{0}^{2\pi\lam}e^{-ikx}f(x)\,dx, \quad\forall\, k\in \m{Z}_{\lam}.\ee
	On the other hand, for any functon $g(k)$ on $\m{Z}_{\lam}$, 
	\be\label{IFT, spatial}
	(\mcal{F}_{x}^{-1}g)(x):=\frac{1}{2\pi}\int_{\m{Z}_{\lam}}e^{ixk}g(k)\,dk^{\lam}, \quad\forall\, x\in\m{R}.\ee
	Consequently, the norm for the Sobolev space $H^{s}(\m{T}_{\lam})$  is defined as 
	\be\label{Sobolev norm}
	||f||_{H^{s}(\m{T}_{\lam})}=||\la k\ra^{s}\hat{f}(k)||_{L^{2}(\m{Z}_{\lam})}=\Big(\frac{1}{\lam}\sum_{k\in\m{Z}_{\lam}}\la k\ra^{2s}|\hat{f}(k)|^{2}\Big)^{\frac12},\ee
	where $\la\cdot\ra:=1+|\cdot|$. The homogeneous subspace $H_{0}^{s}(\m{T}_{\lam})$ is defined as $H_{0}^{s}(\m{T}_{\lam})=\{f\in H^{s}(\m{T}_{\lam}): \hat{f}(0)=0\}$. Finally, the space-time Fourier transform and its inverse are defined as $\mcal{F}=\mcal{F}_{t}\mcal{F}_x$ and $\mcal{F}^{-1}=\mcal{F}_{x}^{-1}\mcal{F}_{t}^{-1}$.
	
	For any $\alpha\neq 0$ and $\lam\geq 1$, consider 
	\be\label{linear eq}
	\left\{\begin{array}{ll}
		\p_{t}w+\alpha\p_{x}^{3}w=0, & x\in\m{T}_{\lam},\, t\in\m{R},\\
		w(0)=w_{0}\in H^{s}(\m{T}_{\lam}).
	\end{array}\right.\ee
	The solution to (\ref{linear eq}) is given explicitly by 
	\be\label{semigroup op}
	w(x,t)=\int_{\m{Z}_{\lam}}e^{ikx}e^{i\phi^{\alpha}(k)t}\widehat{w_{0}}(k)dk^{\lam} := S^{\a}_{\lam}(t)w_{0},\ee
	with  
	\be\label{phase fn}
	\phi^{\alpha}(k):=\alpha k^{3}.\ee
	The following is a generalized version of (\ref{Def, FR space-classical}) for the definition of the Fourier restriction spaces on $\m{T}$.
	
	\begin{definition}[\cite{Bou93b, KPV96, CKSTT03}]\label{Def, FR space}
		For any $\a,\,s,\,b,\,\lam\in\m{R}$ with $\a\neq 0$ and $\lam\geq 1$, the Fourier restriction space $X^{\a}_{s,b,\lam}$ is defined to be the completion of the Schwartz space $\mathscr{S}(\m{T}_{\lam}\times\m{R})$ with respect to the norm
		\be\label{FR norm}
		\| w\|_{X^{\a}_{s,b,\lam}}:=\|\la k\ra ^{s}\la\tau-\phi^{\a}(k)\ra ^{b}\wh{w}(k,\tau)\|_{L^{2}(\m{Z}_{\lam}\times\m{R})},\ee
		where $\wh{w}$ refers to the space-time Fourier transform of $w$. 
		%In addition, for any $T>0$,  $X^{\a,\b}_{s,b,\lam} ([0,T])$ denotes the restriction of $X^{\a,\b}_{s,b,\lam}$ on the domain $\m{T}_{\lam} \times [0,T]$   which is a Banach space when equipped with the usual quotient norm.
	\end{definition}
	
	It has been pointed out in \cite{KPV96} that one needs to take $b=\frac{1}{2}$ for the periodic case. However, this space barely fails to be in $C\big(\m{R}_{t}; H^{s}_{x}\big)$. To ensure the continuity of the time flow of the solution,  a  smaller space $Y^{\a}_{s,\lam}$  will be used via the norm 
	\be\label{Y space}
	\| w\|_{Y^{\a}_{s,\lam}}:=\| w\|_{X^{\a}_{s,\frac{1}{2},\lam}}+\|\la k\ra^{s}\wh{w}(k,\tau)\|_{L^{2}(\m{Z}_{\lam};L^{1}(\m{R}))}.\ee
	Since the second term $\|\la k\ra^{s}\wh{w}(k,\tau)\|_{L^{2}(\m{Z}_{\lam};L^{1}(\m{R}))}$ has already dominated the $L^{\infty}_{t}H^{s}_{x}$ norm of $w$, it follows that $Y^{\a}_{s,\lam}\subseteq C\big(\m{R}_{t}; H^{s}_{x}\big)$. The companion spaces $Z^{\a}_{s,\lam}$ via the norm (\ref{Z space})  is then  introduced to control the $Y^{\a}_{s,\lam}$ norm of the integral term from the Duhamel principle (see Lemma \ref{Lemma, lin est for KdV}).
	\be\label{Z space}
	\| w\|_{Z^{\a}_{s,\lam}}=\| w\|_{X^{\a}_{s,-\frac{1}{2},\lam}}+\left\|\frac{\la k\ra^{s}\wh{w}(k,\tau)}{\la\tau-\phi^{\a}(k)\ra}\right\|_{L^{2}(\m{Z}_{\lam};L^{1}(\m{R}))}.\ee
	
	For convenience, we will drop $\lam$ when it equals $1$. That is,  
	$X^{\a}_{s,b}:=X^{\a}_{s,b,1}$, $ Y^{\a}_{s}:=Y^{\a}_{s,1}$ and $ Z^{\a}_{s}:=Z^{\a}_{s,1}$.
	Throughout this paper, 
	\[\m{R}^*:=\m{R}\setminus\{0\},\quad \m{Z}^*:=\m{Z}\setminus\{0\},\quad \m{Z}_{\lam}^*:=\m{Z}_{\lam}\setminus\{0\}, \quad\m{Q}^*:=\m{Q}\setminus\{0\}.\]
	
	\begin{definition}[\cite{Tao01}]\label{Def, res fcn}
		Let $(\a_1,\a_2,\a_3)$ be a triple in $(\m{R}^{*})^{3}$. Define the {\it resonance function} $H$ associated to this triple by
		\be\label{res fcn}
		H(k_1,k_2,k_3)=\sum_{i=1}^{3}\phi^{\a_i}(k_i),\quad \forall\,\sum_{i=1}^{3}k_{i}=0,\ee
		where $\phi^{\a_i}(k_i)$ is as defined in (\ref{phase fn}). The {\it resonance set} of $H$ is defined to be the zero set of $H$, that is
		\[\Big\{(k_1,k_2,k_3)\in\m{R}^3:\sum_{i=1}^{3}k_i=0,\,H(k_1,k_2,k_3)=0\Big\}.\] 
		
	\end{definition}
	
	\section{Proof of Theorem \ref{Thm, same index in Oh-paper}}
	\label{Sec, proof of Thm on IE}
	
	We first collect some classical results about the irrationality exponent $\mu(\rho)$.
	\begin{proposition}\label{Prop, IE}
		\quad
		\begin{enumerate}[(a)]
			\item If $\rho\in\m{Q}$, then $\mu(\rho)=1$;
			\item If $\rho\in\m{R}\setminus\m{Q}$, then $\mu(\rho)\geq 2$;
			\item If $\rho$ is an irrational algebraic number, then $\mu(\rho)=2$; 
			\item For almost every $\rho\in\m{R}$, $\mu(\rho)=2$;
			\item The function $\mu$ maps $\m{R}$ onto $\{1\}\cup [2,\infty]$.
		\end{enumerate}
	\end{proposition}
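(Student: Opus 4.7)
My plan is to handle the five statements largely separately, invoking classical Diophantine techniques and citing Roth's theorem for part (c) as a black box. I would start with parts (a) and (b), which are elementary, and conclude with part (e), which will require the most care.

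For part (a), writing $\rho = p/q$ in lowest terms, any rational $m/n \ne \rho$ satisfies
\[|\rho - m/n| = |pn - qm|/(q|n|) \geq 1/(q|n|)\]
because $pn - qm$ is a nonzero integer. The defining inequality (\ref{approx power}) then forces $|n|^{\mu-1} < q$, leaving only finitely many admissible $n$ (and hence only finitely many pairs) whenever $\mu > 1$; meanwhile the choice $\mu = 1$ is realized by taking $n$ coprime to $q$ and $m$ the nearest integer to $\rho n$. For part (b), I would invoke Dirichlet's approximation theorem: for every $N \geq 1$, it supplies $(m,n)$ with $1 \leq n \leq N$ and $|\rho - m/n| < 1/(nN) \leq 1/n^2$; irrationality of $\rho$ guarantees that infinitely many distinct pairs arise as $N \to \infty$, so $\mu(\rho) \geq 2$.

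Part (c) I would simply cite, invoking the Thue-Siegel-Roth theorem from \cite{Rot55} as a black box. For part (d), the plan is a standard Borel-Cantelli argument: fix a bounded interval $I$ and $\mu > 2$; the set of $\rho \in I$ satisfying $|\rho - m/n| < |n|^{-\mu}$ for some integer $m$ is covered by $\ls_I |n|$ intervals of length $2|n|^{-\mu}$, giving total measure $\ls_I |n|^{1-\mu}$. Since $\sum_n n^{1-\mu} < \infty$ for $\mu > 2$, Borel-Cantelli gives a measure-zero set; intersecting over $\mu_k = 2 + 1/k$ and exhausting $\m{R}$ by bounded intervals produces a full-measure set on which $\mu(\rho) \leq 2$. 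Combined with (b), this yields $\mu(\rho) = 2$ almost everywhere.

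For part (e), parts (a) and (b) already confine the range of $\mu$ to $\{1\} \cup [2, \infty]$, so it remains to prove surjectivity onto this set. The value $\sigma = 1$ is realized by any rational, and $\sigma = \infty$ by the classical Liouville-type example $\rho = \sum_{k \geq 1} 10^{-k!}$, whose partial sums approximate $\rho$ faster than any polynomial rate. For finite $\sigma \in [2, \infty)$, I would follow Jarnik's continued fraction construction in \cite{Jar1929, Jar1931}: build $\rho = [a_0; a_1, a_2, \ldots]$ by choosing the partial quotients $a_{n+1} \asymp q_n^{\sigma - 2}$, where $q_n$ is the denominator of the $n$-th convergent. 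The standard continued fraction estimates $|\rho - p_n/q_n| \asymp 1/(q_n q_{n+1})$ together with $q_{n+1} \asymp a_{n+1} q_n$ then give $|\rho - p_n/q_n| \asymp q_n^{-\sigma}$, which forces $\mu(\rho) \geq \sigma$; the matching upper bound $\mu(\rho) \leq \sigma$ follows from the classical theorem that every sufficiently good rational approximation of an irrational is essentially a convergent. Making the inductive construction hit $\sigma$ exactly, rather than merely $\geq \sigma$, is the technical heart of Jarnik's argument and the main obstacle I expect in a self-contained write-up.
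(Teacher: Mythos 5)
The paper does not prove this proposition at all: it is presented as a collection of classical facts, with parts (a)--(b) left as elementary, (c) attributed to Thue--Siegel--Roth, (d) to Khintchine's theorem, and (e) to Jarnik's continued-fraction construction, each cited as a black box. Your blind write-up therefore does something the paper does not attempt, namely supply self-contained arguments. Those arguments are essentially correct. The proof of (a) via $|pn-qm|\geq 1$ is the standard one, and the observation that power $1$ is attained by choosing $n$ not divisible by $q$ is the right way to pin down $\mu(\rho)=1$ exactly rather than merely $\mu(\rho)\leq 1$. Part (b) via Dirichlet is standard, and the remark that irrationality is what yields infinitely many distinct pairs is precisely the point. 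For (d), your direct Borel--Cantelli argument is a legitimate alternative to citing Khintchine, and in fact simpler to state since only the convergence half of Khintchine's theorem is needed; you correctly exhaust $\mathbb{R}$ by bounded intervals and take a countable intersection over $\mu_k\downarrow 2$. For (e), you correctly identify that the containment of the range in $\{1\}\cup[2,\infty]$ is already forced by (a)--(b), that $1$ and $\infty$ are attained by rationals and Liouville numbers respectively, and that the content is Jarnik's construction for intermediate $\sigma$. Your sketch of the continued-fraction argument (choosing $a_{n+1}\asymp q_n^{\sigma-2}$, combining $|\rho-p_n/q_n|\asymp (q_nq_{n+1})^{-1}$ with the fact that best approximations are convergents) is the right skeleton; as you honestly flag, turning this into an exact equality $\mu(\rho_\sigma)=\sigma$ requires care in managing the constants hidden in $\asymp$, which is why the paper simply cites Jarnik rather than reproduce it. In short, your route is more elementary and self-contained than the paper's, which simply defers to the literature; either is acceptable here, and yours is correct.
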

	\begin{proof}
		Part (a) and (b) are standard. Part (c) is the famous Thue-Siegel-Roth theorem \cite{Thu1909, Sie1921, Rot55}. Part (d) is the Khintchine theorem \cite{Khi1924}. Part (e) was proved by Jarnik\cite{Jar1929, Jar1931} using the theory of continued fractions. 
		%\footnote{This result was obtained by K. F. Roth in 1955 \cite{Rot55} for which he was awarded the Fields Medal in 1958.}
	\end{proof}
	
	Next, we present two technical   lemmas  concerning the irrationality exponent function $\mu$.
	
	\begin{lemma}\label{Lemma, seq for IE}
		If a real number $\rho$ is approximable with power $\mu$, then there exist $\{(m_{j},n_{j})\}_{j=1}^{\infty}\subset \m{Z}\times\m{Z}^*$ with the following two properties.
		\begin{enumerate}[(1)]
			\item $\{n_j\}_{j=1}^{\infty}$ is an increasing positive sequence and $\lim\limits_{j\rightarrow\infty}n_j=\infty$;
			
			\item For each $j\geq 1$, $(m_j,n_j)$ satisfies 
			\[0<\Big| \rho-\frac{m_j}{n_j}\Big|<\frac{1}{n_j^{\mu}}.\]
		\end{enumerate}
	\end{lemma}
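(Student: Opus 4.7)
The plan is a straightforward counting argument built on Definition~\ref{Def, IE}. The hypothesis yields an infinite set
\[ S\subset\mathbb{Z}\times\mathbb{Z}^{*}, \qquad (m,n)\in S \iff 0<\Big|\rho-\tfrac{m}{n}\Big|<\tfrac{1}{|n|^{\mu}}. \]
First I would symmetrize to positive denominators: since $m/n=(-m)/(-n)$ and $|n|=|-n|$, the map $(m,n)\mapsto(\operatorname{sign}(n)\,m,\,|n|)$ preserves the defining inequality, and its image $S'\subset\mathbb{Z}\times\mathbb{Z}^{+}$ is again infinite. Indeed the only way two distinct elements of $S$ can share the same image is the twin $(m,n)$ and $(-m,-n)$, so the map is at most two-to-one and $|S'|=\infty$.

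Next I would show that for each fixed $n\in\mathbb{Z}^{+}$ only finitely many $m\in\mathbb{Z}$ occur as first coordinates of elements of $S'$. Rewriting the inequality as $0<|m-n\rho|<n^{1-\mu}$, the admissible $m$ lie in a bounded interval of length $2n^{1-\mu}$ centred at $n\rho$, which contains only finitely many integers. Hence the projection $\pi\colon S'\to\mathbb{Z}^{+}$, $\pi(m,n)=n$, is finite-to-one, which forces the image $N:=\pi(S')$ to be infinite.

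Being an infinite subset of $\mathbb{Z}^{+}$, $N$ is unbounded and can be enumerated as a strictly increasing sequence $n_{1}<n_{2}<\cdots$ with $n_{j}\to\infty$; for each $j$ I would then pick any $m_{j}\in\mathbb{Z}$ with $(m_{j},n_{j})\in S'$, and the resulting sequence $\{(m_{j},n_{j})\}_{j=1}^{\infty}$ satisfies both conclusions of the lemma. There is no genuine obstacle in this argument---it is essentially pure bookkeeping---the only mild subtlety being the harmless at-most two-to-one sign reduction used to pass to positive denominators at the outset.
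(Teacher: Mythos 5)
Your argument is correct and follows essentially the same line as the paper's one-sentence proof: the whole content is the observation that for fixed $n$ only finitely many $m$ satisfy the inequality, so infinitely many distinct $|n|$ must occur. You merely spell out the harmless passage to positive denominators and the bookkeeping that the paper leaves implicit.
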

	
	\begin{proof}
		The conclusion follows from Definition \ref{Def, IE} and the observation that for any fixed $n\in\m{Z}^*$, there are at most finitely many $m\in\m{Z}$ such that 
		\[0<\Big| \rho-\frac{m}{n}\Big|<\frac{1}{|n|^{\mu}}.\]
	\end{proof}
	
	\begin{lemma}\label{Lemma, bdd approx beyond IE}
		Let $\rho\in\m{R}\setminus\m{Q}$ with $\mu(\rho)<\infty$. Then for any $\eps>0$, there exists a constant $K=K(\rho,\eps)>0$ such that the inequality 
		\be\label{bad approx beyond IE}
		\Big|\rho-\frac{m}{n}\Big|\geq \frac{K}{|n|^{\mu_{\eps}}}\ee
		holds for any $(m,n)\in\m{Z}\times\m{Z}^*$, where $\mu_{\eps}=\mu(\rho)+\eps$.
	\end{lemma}
	
	\begin{proof}
		Since $\mu_{\eps}=\mu(\rho)+\eps>\mu(\rho)$,  it follows from Definition \ref{Def, IE} that there exist at most finitely many $(m,n)\in\m{Z}\times\m{Z}^*$ such that 
		\[0<\Big|\rho-\frac{m}{n}\Big|<\frac{1}{|n|^{\mu_\eps}}.\]
		In addition, since $\rho$ is an irrational number, $\big|\rho-\frac{m}{n}\big|$ is never zero. Therefore,
		by choosing a sufficiently small constant $K=K(\rho,\eps)$, (\ref{bad approx beyond IE}) holds for any  $(m,n)\in\m{Z}\times\m{Z}^*$.
	\end{proof}
	
	Now some invariant properties of the irrationality exponent $\mu$ will be justified.
	
	\begin{proposition}\label{Prop, inv of IE} 
		\quad
		\begin{enumerate}[(a)]
			\item For any $\sigma\in \m{Q}^{*}$ and $\rho\in\m{R}$, $\mu(\sigma\rho)=\mu(\rho)$.
			\item For any $\sigma\in \m{Q}$ and $\rho\in\m{R}$, $\mu(\rho+\sigma)=\mu(\rho)$.
			\item For any $\rho\in\m{R}^{*}$, $\mu\big(\frac{1}{\rho}\big)=\mu(\rho)$.
		\end{enumerate}
	\end{proposition}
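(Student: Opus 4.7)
The plan is to establish each identity by constructing an explicit correspondence between approximating pairs $(m,n)\in\m{Z}\times\m{Z}^*$ for $\rho$ and for its image under the relevant transformation $T$. In every case I will use the same principle: if the correspondence transforms an approximation of $\rho$ with power $\mu$ into an approximation of $T(\rho)$ with an additional bounded multiplicative constant on the right-hand side, then for any $\mu'<\mu$ that constant can be swallowed by $|n'|^{\mu-\mu'}$ once $|n'|$ is sufficiently large. Lemma \ref{Lemma, seq for IE} supplies an infinite sequence of approximating pairs for $\rho$ with $|n|\to\infty$, so infinitely many images survive, giving $\mu(T(\rho))\geq \mu'$ for every $\mu'<\mu(\rho)$ and hence $\mu(T(\rho))\geq\mu(\rho)$. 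The reverse inequality then follows by applying the same argument to the inverse transformation $T^{-1}$ on $T(\rho)$.

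For part (a), I would write $\sigma=p/q$ with $p,q\in\m{Z}^*$ and define $(m',n')=(pm,qn)$. A direct computation gives $|\sigma\rho-m'/n'|=|\sigma|\,|\rho-m/n|<|p|\,q^{\mu-1}\,|n'|^{-\mu}$, which fits the scheme above. For part (b), the analogous substitution $(m',n')=(qm+pn,qn)$ satisfies $(\rho+\sigma)-m'/n'=\rho-m/n$ and thus $|(\rho+\sigma)-m'/n'|<q^{\mu}\,|n'|^{-\mu}$. In both situations the general principle closes the argument, with the reverse direction obtained by applying the transformation corresponding to $\sigma^{-1}$ or $-\sigma$. (Finitely many degenerate pairs for which $m'=0$ or $n'=0$ can be discarded without affecting the infinite-family requirement.)

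Part (c) will be the most delicate step and is where I expect the main obstacle. The plan rests on the identity
\[\Big|\frac{1}{\rho}-\frac{m}{n}\Big|=\frac{|m|}{|n\rho|}\,\Big|\rho-\frac{n}{m}\Big|\qquad (m\neq 0),\]
under which swapping coordinates sends approximations of $1/\rho$ to approximations of $\rho$. The difficulty is that the bound $|1/\rho-m/n|<|n|^{-\mu}$ produces $|\rho-n/m|<|\rho|/(|m|\,|n|^{\mu-1})$, whose right-hand side mixes powers of $|n|$ and $|m|$, whereas I need a clean bound of the form $|m|^{-\mu'}$. The decisive observation will be that the inequality forces $|m|/|n|\to 1/|\rho|$ along the sequence furnished by Lemma \ref{Lemma, seq for IE}, so $|m|$ and $|n|$ are comparable up to a multiplicative constant for large $|n|$; this lets me replace $|n|^{-(\mu-1)}$ by $|m|^{-(\mu-1)}$ at the cost of a bounded factor and then trade $(\mu-1)$ for any $\mu'<\mu$. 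The boundary case $\mu(1/\rho)\leq 1$ forces $\rho\in\m{Q}^*$, for which the identity $\mu(\rho)=\mu(1/\rho)=1$ is immediate from Proposition \ref{Prop, IE}(a), and the few exceptional pairs with $m=0$ are absent as soon as $\mu>1$. Applying the identity in the opposite direction to $1/\rho$ in place of $\rho$ furnishes the reverse inequality and completes the proof.
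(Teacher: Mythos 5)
Your proof is correct and takes essentially the same route as the paper: part (c) hinges on the same multiplicative identity linking $\big|\rho-\frac{n}{m}\big|$ and $\big|\frac{1}{\rho}-\frac{m}{n}\big|$, together with the observation that $m/n$ stays bounded along the approximating sequence from Lemma \ref{Lemma, seq for IE}, which lets one trade $n$ for $m$ in the exponent at the cost of an $\varepsilon$-loss; you merely run the transfer from $1/\rho$ to $\rho$ while the paper runs the symmetric direction (after normalizing $\rho>0$ via part (a)). The paper declares (a) and (b) obvious and omits them, so your explicit verifications there supply detail the paper leaves out rather than deviating from it.
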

	\begin{proof}
		As part (a) and (b) are obvious,  we will only prove part (c).  First, by taking advantage of (a), we may assume $\rho>0$. Then due to symmetry, it reduces to  prove 
		\[\mbox{$\mu(\rho)\leq \mu\big(\frac{1}{\rho}\big)$.}\]
		If $\rho\in\m{Q}$, then it is trivial. So we further assume $\rho$ is an irrational number, which implies  $\mu\big(\frac{1}{\rho}\big)\geq 2$. Then it suffices to show that if $\rho$ is approximable with some power $\mu\geq 2$, then $\frac{1}{\rho}$ is approximable with the power $\mu-\eps$ for any $\eps>0$. 
		
		Let $\rho$ be approximable with some power $\mu\geq 2$ and fix any $\eps>0$. It follows from Lemma \ref{Lemma, seq for IE} that 
		there exists a sequence $\{(m_{j},n_{j})\}_{j=1}^{\infty}\subset \m{Z}\times\m{Z}^*$ such that $n_j>0$, $\lim\limits_{j\rightarrow\infty}n_j=\infty$ and 
		\[0<\Big|\rho-\frac{m_j}{n_j}\Big|<\frac{1}{n_j^{\mu}}.\]
		Since $\rho>0$ and $\mu\geq 2$, it immediately yields that $\lim\limits_{j\rightarrow\infty}m_j=\infty$. In addition, for sufficiently large $j$, we have $0<\frac{m_j}{n_j}<2\rho$. Hence, for any such $j$,  noticing
		\begin{align*}
			0<\Big| \frac{1}{\rho}-\frac{n_j}{m_j}\Big| = \frac{n_j}{\rho m_j}\Big|\frac{m_j}{n_j}-\rho\Big| <\frac{1}{\rho m_{j} n_{j}^{\mu-1}} &=\frac{1}{\rho}\Big(\frac{m_j}{n_j}\Big)^{\mu-1}\frac{1}{m_j^{\mu}}.
		\end{align*}
		we obtain 
		\[0<\Big| \frac{1}{\rho}-\frac{n_j}{m_j}\Big|<\frac{(2\rho)^{\mu-1}}{\rho}\frac{1}{m_j^{\mu}}.\]
		Finally, since $\lim\limits_{j\rightarrow\infty}m_j=\infty$, when $j$ is large enough, $m_{j}^{\eps}>(2\rho)^{\mu-1}/\rho$. Therefore, 
		\[0<\Big| \frac{1}{\rho}-\frac{n_j}{m_j}\Big|<\frac{1}{m_j^{\mu-\eps}}.\]
		Since the above inequality is valid for any large $j$, $\frac{1}{\rho}$ is approximable with the power $\mu-\eps$.
	\end{proof}

	Finally, we discuss the relation between the irrationality exponent $\mu(\rho)$ in Definition \ref{Def, IE} and the minimal type index $\nu(\rho)$ in Definition \ref{Def, MTI}. 
	
	\begin{proposition}\label{Prop, IE and MTI relation}
		\quad
		\begin{enumerate}[(a)]
			\item If $\rho\in\m{Q}$, then $\nu(\rho)=\infty$ and $\mu(\rho)=1$;
			\item If $\rho\in\m{R}\setminus\m{Q}$, then $\nu(\rho)=\mu(\rho)-2$.
		\end{enumerate}
	\end{proposition}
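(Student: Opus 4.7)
Both parts reduce to direct manipulation of the two definitions, relying on only the elementary lower bound $|pn - qm| \geq 1$ for nonzero integers together with Lemma \ref{Lemma, seq for IE}.

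For (a), write $\rho = p/q$ in lowest terms with $q \geq 1$. The claim $\nu(\rho) = \infty$ is immediate: taking $(m,n) = (p,q) \in \m{Z} \times \m{Z}^*$ in (\ref{type nu}) gives $|\rho - m/n| = 0$, which cannot dominate $K/|n|^{2+\nu}$ for any $K > 0$, so the set in (\ref{MTI}) is empty. For $\mu(\rho) = 1$, the observation $|\rho - m/n| = |pn - qm|/(q|n|) \geq 1/(q|n|)$ whenever $pn \neq qm$ shows that (\ref{approx power}) can hold only when $q > |n|^{\mu - 1}$, which eliminates all but finitely many $n$ once $\mu > 1$; hence $\mu(\rho) \leq 1$. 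The matching bound $\mu(\rho) \geq 1$ follows by exhibiting, for any $\mu' < 1$, infinitely many $(m,n)$ with $1 \leq |pn - qm| < q |n|^{1 - \mu'}$, which is standard via B\'ezout (taking $n$ coprime to $q$ and solving $pn - qm = 1$).

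For (b), $\rho$ irrational ensures $|\rho - m/n| > 0$ for every $(m,n) \in \m{Z} \times \m{Z}^*$, so the set in (\ref{MTI}) is now meaningful. I would prove $\nu(\rho) = \mu(\rho) - 2$ via two matching inequalities. For $\nu(\rho) \leq \mu(\rho) - 2$, fix any $\nu > \mu(\rho) - 2$ and set $\mu := \nu + 2 > \mu(\rho)$: by definition of $\mu(\rho)$ only finitely many pairs $(m_i, n_i)$ satisfy $|\rho - m_i/n_i| < 1/|n_i|^{\mu}$. On the complement one has $|\rho - m/n| \geq 1/|n|^{\mu} = 1/|n|^{2+\nu}$, while on the finite exceptional set the quantity $|\rho - m_i/n_i| \cdot |n_i|^{2+\nu}$ has a positive minimum $K_1$, so $K := \min(1, K_1) > 0$ verifies that $\rho$ is of type $\nu$ and hence $\nu(\rho) \leq \nu$. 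Letting $\nu \searrow \mu(\rho) - 2$ gives the bound.

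For the reverse inequality $\nu(\rho) \geq \mu(\rho) - 2$ I would argue by contradiction: if some $\nu < \mu(\rho) - 2$ made $\rho$ of type $\nu$ with constant $K > 0$, pick $\mu \in (\nu + 2, \mu(\rho))$ and invoke Lemma \ref{Lemma, seq for IE} to extract a sequence $\{(m_j, n_j)\}$ with $|n_j| \to \infty$ and $0 < |\rho - m_j/n_j| < 1/|n_j|^{\mu}$; combining the two bounds forces $K |n_j|^{\mu - (2 + \nu)} < 1$, which is impossible for large $j$ since $\mu - (2 + \nu) > 0$. The only genuinely delicate point in the whole proof is arranging an approximating sequence with $|n_j| \to \infty$ rather than one concentrated on finitely many denominators, which is exactly what Lemma \ref{Lemma, seq for IE} supplies; everything else is routine bookkeeping with the two definitions.
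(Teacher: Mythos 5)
Your proof is correct and follows essentially the same route as the paper: the direction $\nu(\rho)\geq\mu(\rho)-2$ uses Lemma \ref{Lemma, seq for IE} exactly as the paper does (your contradiction form is just the contrapositive of the paper's direct statement), and your argument for $\nu(\rho)\leq\mu(\rho)-2$ simply re-derives Lemma \ref{Lemma, bdd approx beyond IE} inline, handling the finite exceptional set via its positive minimum, which is precisely the content of that lemma. The paper dismisses part (a) as obvious, so your B\'ezout argument supplies details the paper omits but does not take a different approach.
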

	\begin{proof}
		Part (a) is obvious, so we will only prove part (b). Let $\rho\in\m{R}\setminus\m{Q}$. We will first show $\nu(\rho)\geq \mu(\rho)-2$. If $\rho$ is approximable with power $\mu$, then it follows from Lemma \ref{Lemma, seq for IE} that there exists a sequence $\{(m_{j},n_{j})\}_{j=1}^{\infty}\subset \m{Z}\times\m{Z}^*$ such that $n_j>0$, $\lim\limits_{j\rightarrow\infty}n_j=\infty$ and 
		\[\Big|\rho-\frac{m_j}{n_j}\Big|<\frac{1}{n_j^{\mu}}.\]
		As a result, for any $\eps>0$, there does not exist $K=K(\rho,\eps)$ such that 
		\[\Big|\rho-\frac{m_j}{n_j}\Big|\geq\frac{K}{n_j^{\mu-\eps}}\]
		holds for all $j\geq 1$. In other words, $\rho$ is not of type $\mu-2-\eps$ according to Definition \ref{Def, MTI}. Therefore, $\nu(\rho)\geq \mu-2-\eps$. Sending $\eps\rightarrow 0^+$ and then taking supremum with respect to $\mu$ leads to $\nu(\rho)\geq \mu(\rho)-2$.
		
		Now if $\mu(\rho)=\infty$, then it follows from $\nu(\rho)\geq \mu(\rho)-2$ that $\nu(\rho)=\infty$. So in the following, we just assume $\mu(\rho)<\infty$ and intend to show $\nu(\rho)\leq \mu(\rho)-2$. According to Lemma \ref{Lemma, bdd approx beyond IE}, for any $\eps>0$, there exists a constant $K=K(\rho,\eps)>0$ such that  
		\[\Big|\rho-\frac{m}{n}\Big|\geq \frac{K}{|n|^{\mu(\rho)+\eps}}\]
		holds for any $(m,n)\in\m{Z}\times\m{Z}^*$. Hence, $\rho$ is of type $\mu(\rho)+\eps-2$, which implies $\nu(\rho)\leq \mu(\rho)+\eps-2$. Sending $\eps\rightarrow 0^+$ yields $\nu(\rho)\leq \mu(\rho)-2$.
	\end{proof}
	
	For $r\geq \frac14$, denote $\sigma_r$ and $s_r$ as in (\ref{sharp index by IE}). That is,
	\be\label{sigma_r and s_r} 
	\sigma_r=\mu(\sqrt{12r-3})\quad\text{and}\quad s_{r}=\left\{\begin{array}{cll}
		1 & \text{if} & \sigma_{r}=1 \text{\; or \;} \sigma_{r}\geq 3,\\
		\dfrac{\sigma_{r}-1}{2} & \text{if} & 2\leq \sigma_{r}<3.
	\end{array}\right.\ee
	The properties of $s_r$ can be derived from Proposition \ref{Prop, IE}.
	
	\begin{proposition}\label{Prop, s_r}
		Let $r\geq \frac{1}{4}$  be given. Then 
		\begin{enumerate}[(a)]
			\item $s_{r}=1$ if   $\sqrt{12r-3}\in\m{Q}$;
			
			\item $\frac12\leq s_{r}\leq 1$ for any $r\geq \frac{1}{4}$;
			\item $s_{r}=\frac12$ if $r$ is an algebraic number and  $\sqrt{12r-3}\notin\m{Q}$.
			
			\item $s_{r}=\frac12$ for almost every $r\geq \frac14$;
			
			\item The range of $s_{r}$ over $r\in[\frac14,\infty)$ is $[\frac12,1]$.
		\end{enumerate}
	\end{proposition}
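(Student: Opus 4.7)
The plan is to unwind the definition of $s_r$ and apply Proposition \ref{Prop, IE} case by case, using the fact that the map $r\mapsto \rho_r=\sqrt{12r-3}$ is a smooth bijection from $[1/4,\infty)$ onto $[0,\infty)$ which both preserves measure-zero sets and is easily inverted by $r=(\rho^2+3)/12$.

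For (a), if $\sqrt{12r-3}\in\m{Q}$ then Proposition \ref{Prop, IE}(a) gives $\sigma_r=\mu(\rho_r)=1$, and the first clause of the definition of $s_r$ yields $s_r=1$. For (b), Proposition \ref{Prop, IE}(e) tells us that $\sigma_r$ lies in $\{1\}\cup[2,\infty]$, so the two clauses in the piecewise definition of $s_r$ genuinely exhaust every $r\geq 1/4$. In the first clause $s_r=1$; in the second $s_r=(\sigma_r-1)/2\in[1/2,1)$ since $2\leq \sigma_r<3$. Hence $s_r\in[1/2,1]$ for all $r\geq 1/4$.

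For (c), if $r$ is algebraic then so is $12r-3$, and since algebraic numbers form a field closed under taking square roots, $\sqrt{12r-3}$ is algebraic as well. If in addition it is irrational, Proposition \ref{Prop, IE}(c) gives $\sigma_r=2$, which places us in the second clause with $s_r=1/2$. For (d), let $E=\{\rho\in\m{R}:\mu(\rho)\neq 2\}$, which has Lebesgue measure zero by Proposition \ref{Prop, IE}(d). The map $\Phi(r)=\sqrt{12r-3}$ is a $C^\infty$ diffeomorphism of $(1/4,\infty)$ onto $(0,\infty)$, hence absolutely continuous, so $\Phi^{-1}(E)$ has measure zero. For every $r\in[1/4,\infty)$ outside the null set $\Phi^{-1}(E)\cup\{1/4\}$ we have $\sigma_r=2$, and therefore $s_r=1/2$.

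For (e), the inclusion of the range in $[1/2,1]$ is part (b). To show surjectivity onto $[1/2,1]$, first note that $r=1/4$ gives $\rho_r=0\in\m{Q}$, so $\sigma_r=1$ and $s_r=1$, handling the endpoint $\alpha=1$. For $\alpha\in[1/2,1)$, set $\sigma=2\alpha+1\in[2,3)$; by Proposition \ref{Prop, IE}(e) there exists $\rho\in\m{R}$ with $\mu(\rho)=\sigma$. The defining inequality for $\mu$ is invariant under $(m,\rho)\mapsto(-m,-\rho)$, so $\mu(|\rho|)=\mu(\rho)=\sigma$, and we may assume $\rho\geq 0$. Setting $r=(\rho^2+3)/12\geq 1/4$ gives $\rho_r=\rho$, hence $\sigma_r=\sigma\in[2,3)$ and $s_r=(\sigma-1)/2=\alpha$. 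There is nothing genuinely hard here; the only subtlety worth flagging is verifying that the pieces of the piecewise definition of $s_r$ cover every admissible value of $\sigma_r$ (which is where Jarnik's range theorem is silently invoked) and that the change of variables $\rho=\sqrt{12r-3}$ is regular enough to transport the measure-zero exceptional set of Khintchine's theorem, both of which are immediate.
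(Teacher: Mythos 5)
Your proof is correct and takes exactly the route the paper intends: the paper simply asserts that these properties ``follow easily from Proposition~\ref{Prop, IE}'' and supplies no further argument, and your case-by-case unwinding of the piecewise definition of $s_r$ via parts (a)--(e) of that proposition is precisely the content being left implicit. The two places that require any thought at all --- transporting the Lebesgue-null exceptional set of Khintchine's theorem through the diffeomorphism $r\mapsto\sqrt{12r-3}$ for part (d), and reducing to $\rho>0$ via $\mu(-\rho)=\mu(\rho)$ before inverting $\rho=\sqrt{12r-3}$ in the surjectivity argument of part (e) --- are handled correctly.
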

	\begin{proof}
		\begin{enumerate}[(a)]
			\item When $\sqrt{12r-3}\in\m{Q}$,  it follows from Proposition \ref{Prop, IE}(a) that $\sigma_r=1$. Therefore, $s_r=1$.
			\item This part is obviously due to the definition (\ref{sharp index by IE}) for $s_r$.
			\item When $r$ is an algebraic number, $\sqrt{12r-3}$ is also an algebraic number. Now if $\sqrt{12r-3}\notin\m{Q}$, then it follows from Proposition \ref{Prop, IE}(c) that $\sigma_r=2$. Hence, $s_r=\frac12$.
			\item By Proposition \ref{Prop, IE}(d), $\sigma_r=2$ for almost every $r\geq \frac14$. Thus, $s_{r}=\frac12$ for almost every $r\geq \frac14$.
			\item Combining Proposition \ref{Prop, IE}(e) with Proposition \ref{Prop, inv of IE}(a), we conclude the range of $\sigma_r$ is $\{1\}\cup [2,\infty]$. As a result, the range of $s_r$ is $[\frac12,1]$.
		\end{enumerate}
	\end{proof}
	
	Now we are ready to prove Theorem \ref{Thm, same index in Oh-paper}.

	\begin{proof}[Proof of Theorem \ref{Thm, same index in Oh-paper}]
		Let $a_2\in(0,4]\setminus\{1\}$. Recall $\nu_{c}=\max\{\nu(c_1),\nu(c_2)\}$, where 
		\[c_1=\frac12-\frac{1}{6}\sqrt{\frac{12}{a_2}-3}, \quad c_2=\frac12+\frac{1}{6}\sqrt{\frac{12}{a_2}-3},  \]
		and recall $\nu_{d}=\max\{\nu(d_1),\nu(d_2)\}$, where 
		\[d_{1}=\frac{-3a_2-\sqrt{3a_2(4-a_2)}}{2(1-a_2)}, \quad d_2=\frac{-3a_2+\sqrt{3a_2(4-a_2)}}{2(1-a_2)}.\]
		Denote $r=\frac{1}{a_2}$ and $\rho_{r}=\sqrt{12r-3}$. Then $r\geq \frac14$ and it follows from (\ref{sigma_r and s_r}) that $\sigma_r=\mu(\rho_r)$. In addition, based on Proposition \ref{Prop, IE},
		\[\mu(c_1)=\mu\bigg(\sqrt{\frac{12}{a_2}-3}\bigg)=\sigma_r.\]
		Similarly, $\mu(c_2)=\sigma_r$. On the other hand, noticing that $d_1=\frac{1}{c_1}$ and $d_2=\frac{1}{c_2}$, so it follows from Proposition \ref{Prop, inv of IE} that $\mu(d_1)=\mu(c_1)$ and $\mu(d_2)=\mu(c_2)$. Hence, 
		\be\label{same indexes}
		\mu(d_1)=\mu(d_2)=\mu(c_1)=\mu(c_2)=\sigma_r.\ee
		Then according to Proposition \ref{Prop, IE and MTI relation},
		$\nu(d_1)=\nu(d_2)=\nu(c_1)=\nu(c_2)$.
		In particular, $\nu_{d}=\nu_{c}=\nu(c_1)$.
		
		Next, we will justify (\ref{simple index}). Denote 
		\be\label{s tilde}
		\wt{s}=\min\Big\{1,\frac12+\frac12\nu(c_1)\Big\}.\ee
		Then it suffices to prove $\wt{s}=s_{r}$.
		
		\begin{itemize}
			\item Case 1: $\rho_{r}\in\m{Q}$. In this case, $c_1\in\m{Q}$, so it follows from Proposition \ref{Prop, IE and MTI relation} that $\nu(c_1)=\infty$. Therefore, $\wt{s}=1$. On the other hand, by Proposition \ref{Prop, s_r}(a), we also have $s_r=1$. 
			
			\item Case 2: $\rho_{r}\in\m{R}\setminus\m{Q}$. In this case, $c_1\in \m{R}\setminus\m{Q}$, so it follows from Proposition \ref{Prop, IE and MTI relation} that $\nu(c_1)=\mu(c_1)-2$, which implies $\nu(c_1)=\sigma_r-2$. Putting this into (\ref{s tilde}) yields
			\[\wt{s}=\min\Big\{1,\frac{\sigma_r-1}{2}\Big\}.\]
			Since $\rho_{r}\in\m{R}\setminus\m{Q}$, then Proposition \ref{Prop, IE}(b) implies $\sigma_r=\mu(\rho_r)\geq 2$.  Thus, we conclude from (\ref{sigma_r and s_r}) that 
			\[s_r=\min\Big\{1,\frac{\sigma_r-1}{2}\Big\}=\wt{s}.\] 
		\end{itemize}
	\end{proof}
	
	\section{Linear estimates}
	\label{Sec, lin est}
	
	Let $\psi\in C_{0}^{\infty}(\m{R})$ be a bump function supported on $[-2,2]$ and $\psi=1$ on $[-1,1]$.  We first present two linear  estimates, one is for the solution to the homogeneous linear KdV equation (\ref{linear eq}), and another one is for the solution to the forced linear KdV equation (\ref{linear eq}) with the right hand side being $F$ instead of 0. Recall the notation $S^{\a}_{\lam}$ as defined in (\ref{semigroup op}).
	
	\begin{lemma}\label{Lemma, lin est for KdV}
		There exists a constant $C$ which only depends on the bump function $\psi$ such that for any $\a,s,\lam\in\m{R}$ with $\a\neq 0$ and $\lam\geq 1$, 
		\be\label{lin est for KdV}
		\|\psi(t)S^{\a}_{\lam}(t)w_{0}\|_{Y^{\a}_{s,\lam}}\leq C\| w_{0}\|_{H^{s}(\m{T}_{\lam})}\ee
		and
		\be\label{lin est for Duhamel}
		\Big\Vert \psi(t)\int_{0}^{t}S^{\a}_{\lam}(t-t')F(t')dt' \Big\Vert_{Y^{\a}_{s,\lam}}\leq C\| F\|_{Z^{\a}_{s,\lam}}
		\ee
	\end{lemma}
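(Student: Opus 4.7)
\medskip

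\noindent\textbf{Proof proposal.} Both inequalities are classical Bourgain--Kenig--Ponce--Vega type linear estimates; I would follow the standard Fourier-side recipe, adapted so that everything is carried out uniformly in the scaling parameter $\lam\geq 1$. Since $\psi$ is Schwartz, the constants I produce depend only on $\psi$, not on $\a$, $\b$, $s$, or $\lam$.

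For the homogeneous estimate (\ref{lin est for KdV}), my plan is to compute the space-time Fourier transform directly. From (\ref{semigroup op}) one sees
\[
\mcal{F}\bigl(\psi(t)S^{\a,\b}_{\lam}(t)w_{0}\bigr)(k,\tau)=\wh{w_{0}}(k)\,\wh{\psi}\bigl(\tau-\phi^{\a,\b}(k)\bigr),
\]
so when this is inserted into the norms (\ref{FR norm}) and (\ref{Y space}) the $k$- and $\tau$-variables decouple. The $X^{\a,\b}_{s,1/2,\lam}$ piece becomes $\|w_0\|_{H^{s}(\m{T}_{\lam})}\cdot\|\la\tau\ra^{1/2}\wh{\psi}(\tau)\|_{L^2(\m{R})}$, and the auxiliary piece becomes $\|w_0\|_{H^{s}(\m{T}_{\lam})}\cdot\|\wh{\psi}\|_{L^1(\m{R})}$; both $\tau$-factors are finite and depend only on $\psi$.

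For the Duhamel estimate (\ref{lin est for Duhamel}), I would first reduce to $F$ with Fourier transform compactly supported and then pass to the limit. Set $\sigma=\tau-\phi^{\a,\b}(k)$ and introduce a smooth cutoff $\eta\in C_0^{\infty}(\m{R})$ with $\eta=1$ on $[-1,1]$, supported in $[-2,2]$. Writing
\[
\psi(t)\int_{0}^{t}S^{\a,\b}_{\lam}(t-t')F(t')\,dt'=\psi(t)\int_{\m{Z}_{\lam}}\int_{\m{R}}e^{ikx}\,e^{i\phi^{\a,\b}(k)t}\,\frac{e^{i\sigma t}-1}{i\sigma}\,\wh{F}(k,\tau)\,d\tau\,dk^{\lam},
\]
I would split the inner $\tau$-integral according to whether $\eta(\sigma)$ or $1-\eta(\sigma)$ is inserted. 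On the $1-\eta$ piece, the factor $\frac{1}{i\sigma}$ is harmless; one expands $e^{i\sigma t}-1$ into two pieces, obtaining (i) a function whose space-time Fourier transform equals $\frac{(1-\eta(\sigma))}{i\sigma}\wh{F}(k,\tau)\wh{\psi}(\tau-\phi^{\a,\b}(k)-\sigma)$ suitably, which is bounded in $X^{\a,\b}_{s,1/2,\lam}$ and in the auxiliary norm by $\|F\|_{X^{\a,\b}_{s,-1/2,\lam}}$ and the second term in (\ref{Z space}) respectively, via Cauchy--Schwarz in $\sigma$; and (ii) a pure free solution $\psi(t)S^{\a,\b}_{\lam}(t)w_1$ with $\wh{w_1}(k)=\int\frac{1-\eta(\sigma)}{i\sigma}\wh{F}(k,\tau)\,d\tau$, for which I apply the already-proved homogeneous bound and estimate $\|w_1\|_{H^s(\m{T}_\lam)}$ by the second term of (\ref{Z space}). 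On the $\eta$ piece I Taylor-expand $\tfrac{e^{i\sigma t}-1}{i\sigma}=\sum_{n\geq 0}\tfrac{(i\sigma)^{n}t^{n+1}}{(n+1)!}$; each term yields $\psi(t)t^{n+1}S^{\a,\b}_{\lam}(t)w_{1,n}$ where $\wh{w_{1,n}}(k)=\int\eta(\sigma)\tfrac{(i\sigma)^n}{(n+1)!}\wh{F}(k,\tau)\,d\tau$, and another application of (\ref{lin est for KdV}) with the bump $\psi(t)t^{n+1}$ (whose Schwartz norms decay geometrically in $n$) together with Cauchy--Schwarz in $\sigma$ over the compact set $|\sigma|\leq 2$ bounds the sum by $\|F\|_{X^{\a,\b}_{s,-1/2,\lam}}$.

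The main technical obstacle I anticipate is the $L^{2}(\m{Z}_{\lam})L^{1}(\m{R})$ component of the $Y^{\a,\b}_{s,\lam}$ norm on the left of (\ref{lin est for Duhamel}): unlike the $X^{\a,\b}_{s,1/2,\lam}$ component it cannot be handled by a pure Plancherel argument. The point is that the decomposition above reduces it to one of two situations already under control, namely a free solution (where the homogeneous bound applies) or a Fourier multiplier localized to a shell $|\sigma|\sim 1$ or $|\sigma|\gs 1$ where $\la\sigma\ra^{-1}$ is integrable in $\sigma$, so Cauchy--Schwarz yields exactly the second term of (\ref{Z space}). The uniformity in $\lam$ follows automatically because the normalized counting measure $dk^{\lam}$ enters identically on both sides and all $\tau$-integrations are carried out against $\psi$-dependent weights independent of $\lam$.
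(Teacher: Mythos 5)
Your proposal is essentially the standard Bourgain/CKSTT argument; the paper itself gives no independent proof and simply cites Lemmas~7.1 and~7.2 of \cite{CKSTT03}, which proceed exactly along the lines you describe (explicit Fourier-side formula for the free evolution; Duhamel term written with $\frac{e^{i\sigma t}-1}{i\sigma}$, split into $|\sigma|\lesssim 1$ and $|\sigma|\gtrsim 1$, with Taylor expansion on the small-$\sigma$ piece and separation into a multiplier term plus a free-solution term on the large-$\sigma$ piece, the $L^2_k L^1_\tau$ component handled by Young/Cauchy--Schwarz against $\la\sigma\ra^{-1}$). One small slip worth correcting: the Schwartz-type norms of $\psi(t)t^{n+1}$ \emph{grow} (at most) geometrically in $n$ rather than decaying; the sum over $n$ converges because this geometric growth, together with $|\sigma|^n\leq 2^n$, is dominated by the $1/(n+1)!$ from the Taylor coefficient, not because the bump norms themselves shrink.
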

	
	The proof of the above lemma is almost the same as those for Lemma 7.1 and Lemma 7.2 in \cite{CKSTT03}, so we omit it.  Next, we provide two well-known embedding results.
	
	\begin{lemma}\label{Lemma, L4t-2x}
		There exists a universal constant $C$ such that for any $\a\in\m{R}^*$, $\lam\geq 1$ and for any function $g$ on $\m{T}_{\lam}\times\m{R}$,
		\be\label{L4t-2x}
		\| g\|_{L^{4}_{t}L^{2}_{x}}\leq C\| g\|_{X^{\a}_{0,\frac{1}{4},\lam}}.\ee 
	\end{lemma}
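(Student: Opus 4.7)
The plan is to reduce to a one-dimensional Sobolev embedding in the time variable after a suitable change of variables that removes the phase $\phi^{\a,\b}$. Given $g\in\mathscr{S}(\m{T}_{\lam}\times\m{R})$, set
\[
F(k,\tau):=\wh{g}\bigl(k,\tau+\phi^{\a,\b}(k)\bigr),\qquad (k,\tau)\in\m{Z}_{\lam}\times\m{R}.
\]
Then $\|\la\tau\ra^{1/4}F(k,\tau)\|_{L^{2}(\m{Z}_{\lam}\times\m{R})}=\|g\|_{X^{\a,\b}_{0,1/4,\lam}}$ by a translation in the $\tau$-variable, so all the $(\a,\b,\lam)$ dependence is now absorbed into this identity and the resulting constant will be universal.

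Next I would apply Plancherel in $x$ on each time slice. Writing
\[
H(k,t):=\mcal{F}_{\tau}^{-1}\bigl[F(k,\cdot)\bigr](t),
\]
the Fourier inversion formula yields $\wh{g}_x(k,t)=e^{it\phi^{\a,\b}(k)}H(k,t)$, and Plancherel (on $\m{T}_{\lam}$ with the normalized counting measure on $\m{Z}_{\lam}$) gives $\|g(\cdot,t)\|_{L^{2}_{x}}=\|H(\cdot,t)\|_{L^{2}(\m{Z}_{\lam})}$ for each $t$ (the modulus kills the phase $e^{it\phi^{\a,\b}(k)}$). Taking the $L^{4}_{t}$ norm in $t$ converts the desired estimate into
\[
\|g\|_{L^{4}_{t}L^{2}_{x}}=\|H\|_{L^{4}_{t}L^{2}(\m{Z}_{\lam})}.
\]

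Now I would exchange the order of integration using Minkowski's integral inequality. Since $4\geq 2$,
\[
\|H\|_{L^{4}_{t}L^{2}(\m{Z}_{\lam})}\leq\|H\|_{L^{2}(\m{Z}_{\lam})L^{4}_{t}}.
\]
For each fixed $k$, $H(k,\cdot)$ is a function of $t$ whose Fourier transform in $t$ is $F(k,\cdot)$, so the one-dimensional Sobolev embedding $H^{1/4}(\m{R})\hookrightarrow L^{4}(\m{R})$ (which holds by $1/2-1/4=1/4$) yields
\[
\|H(k,\cdot)\|_{L^{4}_{t}}\leq C\,\|\la\tau\ra^{1/4}F(k,\tau)\|_{L^{2}_{\tau}}
\]
with a universal $C$. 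Squaring, integrating over $k\in\m{Z}_{\lam}$, and taking the square root then gives
\[
\|g\|_{L^{4}_{t}L^{2}_{x}}\leq C\,\|\la\tau\ra^{1/4}F(k,\tau)\|_{L^{2}(\m{Z}_{\lam}\times\m{R})}=C\,\|g\|_{X^{\a,\b}_{0,1/4,\lam}},
\]
which is the desired bound. There is no serious obstacle; the only point to check carefully is the direction of Minkowski's inequality (outer power $4$ is larger than inner power $2$) and the fact that the change of variables $\tau\mapsto\tau+\phi^{\a,\b}(k)$ is a translation on $\m{R}$ for each fixed $k$, so the constant $C$ does not depend on $\a$, $\b$, or $\lam$.
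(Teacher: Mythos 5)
Your proof is correct. The paper itself does not give a proof of Lemma \ref{Lemma, L4t-2x} but instead cites Oh's thesis (Lemma 2.3.2), and your argument — translating in $\tau$ to absorb the phase $\phi^{\a,\b}(k)$, applying Plancherel in $x$, swapping the order of norms by Minkowski since $4\ge 2$, and then invoking the one-dimensional Sobolev embedding $H^{1/4}(\m{R})\hookrightarrow L^4(\m{R})$ fiberwise in $k$ — is the standard route for exactly this estimate and is essentially what the cited reference does; the observation that each step has a constant independent of $\a$, $\b$, and $\lam$ is the only real point of content, and you have made it explicitly.
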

	The proof of this estimate can be found in (\cite{Oh07}, Lemma 2.3.2).

	%\begin{lemma}\label{Lemma, L2t-infty x}
	%There exists a universal constant $C$ such that for any $\lam\geq 1$ and for any function $g$ on $\m{T}_{\lam}\times\m{R}$ with supp $\mcal{F}_{x}g(\cdot,t)\subset \big[\frac{1}{\lam},2\big]$ for any $t\in\m{R}$, 
	%\be\label{L2t-infty x}
	%\| g\|_{L^{2}_{t}L^{\infty}_{x}}\leq C\lam^{0+}\big\| |\p_{x}|^{\frac{1}{2}}g\big\|_{L^{2}(\m{T}_{\lam}\times \m{R})}.\ee
	%\end{lemma}
	%
	%\begin{lemma}\label{Lemma, L4t-infty x}
	%There exists a universal constant $C$ such that for any $\a\in\m{R}^*$, $\lam\geq 1$ and for any function $g$ on $\m{T}_{\lam}\times\m{R}$ with supp $\mcal{F}_{x}g(\cdot,t)\subset \big[\frac{1}{\lam},2\big]$ for any $t\in\m{R}$,  
	%\be\label{L4t-infty x}
	%\| g\|_{L^{4}_{t}L^{\infty}_{x}}\leq C\lam^{0+}\big\| |\p_{x}|^{\frac{1}{2}}g\big\|_{X^{\a}_{0,\frac{1}{4},\lam}}.\ee
	%\end{lemma}

	\begin{lemma}\label{Lemma, L4 est}
		Let $\a\in\m{R}^{*}$. Then there exists a constant $C=C(\a)$ such that for any $\lam\geq 1$ and for any function $g$ on $\m{T}_{\lam}\times\m{R}$, 
		\be\label{L4 est}
		\| g\|_{L^{4}(\m{T}_{\lam}\times\m{R})}\leq C\| g\|_{X^{\a}_{0,\frac{1}{3},\lam}}.\ee
	\end{lemma}
	
	When $\alpha =1$ and $\lambda=1$, Lemma \ref{Lemma, L4 est}  was first proved by Bourgain in \cite{Bou93b} for a version when the left hand side of (\ref{L4 est}) is localized in time. Then Tao removed such a restriction in (\cite{Tao01}, Proposition 6.4). Later, a more elementary proof was provided by Oh in his online note \cite{Oh12}. Actually, similar method had been applied earlier to the Schr\"odinger equation (see e.g. \cite{Tao06}, Proposition 2.13).

	\section{Bilinear estimates in non-divergence form}
	\label{Sec, bilin est, non-div}
	This section will present the rigorous version of the bilinear estimates (\ref{nondiv 1}) and (\ref{nondiv 2}) in non-divergence form. Meanwhile, Theorem \ref{Thm, bilin est, non-div} will be broken into Lemma \ref{Lemma, bilin est, nondiv 1}, \ref{Lemma, bilin est, nondiv 2} and Proposition \ref{Prop, sharp bilin est, nondiv 1}, \ref{Prop, sharp bilin est, nondiv 2} in more general settings. We denote $s_r$ as  in (\ref{sharp index by IE}) for any $r\geq \frac14$.

	\begin{lemma}\label{Lemma, bilin est, nondiv 1}
		Let $\lam\geq 1$ and $\a_1,\a_2\in\m{R}^{*}$ with $r:=\frac{\a_2}{\a_1}$. Assume one of the conditions below is satisfied. 
		\begin{enumerate}[(a)]
			\item $r<\frac14$, $s\geq -\frac14$ and $p>0$;
			\item $r=1$, $s\geq \frac12$ and $p>0$;
			\item $r\in[\frac14,\infty)\setminus\{1\}$, $s\geq 1$ and $p>0$;
			\item $r\in[\frac14,\infty)\setminus\{1\}$ with $s_r<1$, $s>s_r$ and $p>s_r$.
		\end{enumerate}
		Then there exist constants $\eps=\eps(\a_1,\a_2)$ and $C=C(\a_1,\a_2,s,p)$  such that
		\be\label{bilin est, nondiv 1}
		\|(\p_{x}w_{1})w_{2}\|_{Z^{\a_{1}}_{s,\lam}}\leq C\,\lam^{p}\| w_{1}\|_{X_{s,\frac{1}{2},\lam}^{\a_{1}}}\| w_{2}\|_{X_{s,\frac{1}{2},\lam}^{\a_{2}}} \ee
		for any $w_1 \in X_{s,\frac{1}{2},\lam}^{\a_{1}}$ and $w_2 \in X_{s,\frac{1}{2},\lam}^{\a_{2}}$ with $\wh{w_2}(0,\cdot)=0$.
	\end{lemma}

	\begin{lemma}\label{Lemma, bilin est, nondiv 2}
		Let $\lam\geq 1$ and $\a_1,\a_2\in\m{R}^{*}$ with $r:=\frac{\a_2}{\a_1}$. Assume one of the conditions below is satisfied.
		\begin{enumerate}[(a)]
			\item $r<\frac14$ and $s\geq -\frac14$ and $p>0$;
			\item $r=1$, $s\geq \frac12$, $p>0$ and $\wh{w_1}(0,\cdot)=0$;
			\item $r\in[\frac14,\infty)\setminus\{1\}$, $s\geq 1$ and $p>0$;
			\item $r\in[\frac14,\infty)\setminus\{1\}$ with $s_r<1$, $s>s_r$ and $p>s_r$.
		\end{enumerate}
		Then there exist constants $\eps=\eps(\a_1,\a_2)$ and $C=C(\a_1,\a_2,s,p)$ such that 
		\be\label{bilin est, nondiv 2}
		\| w_{1}(\p_{x}w_{2})\|_{Z^{\a_{1}}_{s,\lam}}\leq C\,\lam^{p}\| w_{1}\|_{X_{s,\frac{1}{2},\lam}^{\a_{1}}}\| w_{2}\|_{X_{s,\frac{1}{2},\lam}^{\a_{2}}}\ee
		for any $w_1 \in X_{s,\frac{1}{2},\lam}^{\a_{1}}$ and $w_2 \in X_{s,\frac{1}{2},\lam}^{\a_{2}}$.
	\end{lemma}
	
	Next, we will address the sharpness of Lemma \ref{Lemma, bilin est, nondiv 1} and Lemma \ref{Lemma, bilin est, nondiv 2}. Without loss of generality, we take $\lam=1$.
	
	\begin{proposition}\label{Prop, sharp bilin est, nondiv 1}
		The bilinear estimate 
		\be\label{bilin est, nondiv 1, reg}
		\|(\p_{x}w_{1})w_{2}\|_{X^{\a_{1}}_{s,b-1}}\leq C\|w_{1}\|_{X_{s,b}^{\a_{1}}}\|w_{2}\|_{X_{s,b}^{\a_{2}}}, \quad \wh{w_2}(0,\cdot)=0,\ee
		fails for any $b\in\m{R}$ (and hence (\ref{bilin est, nondiv 1}) fails by taking $b=\frac12$) under any of the following conditions.
		\begin{itemize}
			\item[(a)] $r<\frac{1}{4}$ and $s<-\frac{1}{4}$;
			\item[(b)] $r=1$ and $s<\frac12$;
			\item[(c)] $r\in[\frac14,\infty)\setminus\{1\}$ and $s<s_r$;
			\item[(d)] $r\in\m{R}^{*}$, $s\in\m{R}$, but without the restriction $\wh{w_2}(0,\cdot)=0$ in (\ref{bilin est, nondiv 1, reg}).
		\end{itemize}
	\end{proposition}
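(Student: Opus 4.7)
The plan is standard: disprove the estimate in each of the four regimes by producing test functions $w_1,w_2$ each supported on a single integer frequency with a unit-length $\tau$-Fourier box, and comparing the asymptotics of both sides as a parameter $N\to\infty$. What controls the feasibility of the estimate is the competition between the non-divergence spatial loss
\[R:=\frac{|k_1|\la k_3\ra^s}{\la k_1\ra^s \la k_2\ra^s}\]
and the resonance function $H(k_1,k_2,k_3)=-3\a_1 k_2^3 h_r(k_1/k_2)$, with $h_r(x)=x^2+x+(1-r)/3$. Throughout I would use at most two test-function families: \emph{Family A}, with each $\wh{w_i}$ concentrated on the characteristic surface $\tau_i=\phi^{\a_i}(k_i)$, producing an output weight $\la\tau-\phi^{\a_1}(k_3)\ra\sim\la H\ra$; and \emph{Family B}, with $\wh{w_1}$ shifted by $-H$ off the characteristic surface, producing $\la\tau-\phi^{\a_1}(k_3)\ra\sim 1$ but paying a factor $\la H\ra^b$ in $\|w_1\|_{X^{\a_1}_{s,b}}$.

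For case (a) ($r<\tfrac14$, $s<-\tfrac14$), I would take $k_1=N$, $k_2=-N-1$, $k_3=1$; since $h_r(-1)=\frac{1-r}{3}\neq 0$, one has $|H|\sim N^3$, and a direct calculation shows Family A forces failure when $b>\frac{2s+2}{3}$ while Family B forces failure when $b<\frac{1-2s}{3}$. These two intervals cover all of $\m{R}$ precisely when $s<-\tfrac14$. For case (b) ($r=1$, $s<\tfrac12$), $\a_1=\a_2$ lets me take $k_1=N$, $k_2=-N$, $k_3=0$, which forces $H\equiv 0$ identically; Family A alone gives LHS $\sim N$ and RHS $\sim N^{2s}$ \emph{independent of $b$}, contradicting the estimate for every $b\in\m{R}$ whenever $s<\tfrac12$, and $\wh{w_2}(0,\cdot)=0$ is automatic since $k_2=-N\neq 0$. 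Case (d) proceeds identically: dropping the mean-zero hypothesis on $w_2$, choose $k_1=N$, $k_2=0$, $k_3=-N$, which still satisfies $H\equiv 0$ and has $\phi^{\a_2}(0)=0$, so Family A gives LHS/RHS $\sim N$ regardless of $s$ and $b$.

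For case (c) ($r\in[\tfrac14,\infty)\setminus\{1\}$, $s<s_r$), the roots $x_{1r},x_{2r}$ of $h_r$ are real and distinct from $-1$ and $0$, so integer pairs $(k_1,k_2)$ with $k_1/k_2$ close to $x_{1r}$ yield $|k_1|\sim|k_2|\sim|k_3|=:N$ and $|H|\sim N^3\,|x_{1r}-k_1/k_2|$. Definition~\ref{Def, IE} and Lemma~\ref{Lemma, bdd approx beyond IE} guarantee, for every $\eps>0$, an infinite sequence of such pairs with $|H|$ of order $N^{3-\sigma_r\pm\eps}$ in the Diophantine regime $\sigma_r\in[2,3)$, or of order $1$ in the regime $s_r=1$ (either $\rho_r\in\m{Q}$ or $\sigma_r\geq 3$). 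In the first regime Family A rules out $b>\frac{s+2-\sigma_r}{3-\sigma_r}$ and Family B rules out $b<\frac{1-s}{3-\sigma_r}$; the two thresholds coincide exactly at $s=\frac{\sigma_r-1}{2}=s_r$, so for $s<s_r$ every $b\in\m{R}$ is excluded. In the second regime $\la H\ra\sim 1$ along the sequence, so Family A alone gives LHS/RHS $\sim N^{1-s}$ independent of $b$, yielding failure for all $b$ when $s<1=s_r$.

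The main obstacle will be case (c) in the true-Diophantine regime $\sigma_r\in[2,3)$: obtaining the threshold $s_r$ sharply requires tracking the constants in Lemma~\ref{Lemma, bdd approx beyond IE} and sending $\eps\to 0^+$ so that the two families' thresholds in $b$ complement each other precisely at $s_r$. Cases (a), (b), and (d) then reduce to identifying the correct integer triples and performing the resulting bookkeeping.
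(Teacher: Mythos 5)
Your proposal is correct and follows essentially the same route as the paper's (the paper omits the proof of this Proposition, noting it parallels that of Proposition~\ref{Prop, sharp bilin est, nondiv 2}, which is precisely your strategy of pairing the non-divergence weight $R=\tfrac{|k_1|\la k_3\ra^s}{\la k_1\ra^s\la k_2\ra^s}$ against the resonance function $H_2=-3\a_1 k_2^3 h_r(k_1/k_2)$ via two single-frequency test families, together with the Diophantine input from Definition~\ref{Def, IE}, Lemma~\ref{Lemma, seq for IE}, and Lemma~\ref{Lemma, bdd approx beyond IE}). The lone imprecision is in case (c) at the boundary $\sigma_r=3$: there one only gets $\la H_2\ra\lesssim N^{\eps_0}$ rather than $\la H_2\ra\sim 1$ along the approximating sequence, so ``Family A alone'' does not dispatch all $b$; you should run both Families A and B exactly as in the $\sigma_r\in[2,3)$ regime (and send $\eps_0\to 0^+$), which still closes the argument for $s<1=s_r$ — this is how the paper's $\zeta_r$-bookkeeping handles it.
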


	\begin{proposition}\label{Prop, sharp bilin est, nondiv 2}
		The bilinear estimate 
		\be\label{bilin est, nondiv 2, reg}
		\| w_{1}(\p_{x}w_{2})\|_{X^{\a_{1}}_{s,b-1}}\leq C\| w_{1}\|_{X_{s,b}^{\a_{1}}}\| w_{2}\|_{X_{s,b}^{\a_{2}}}\ee
		fails for any $b\in\m{R}$ (and hence (\ref{bilin est, nondiv 2}) fails by taking $b=\frac12$) under any of the following conditions.
		\begin{itemize}
			\item[(a)] $r<\frac{1}{4}$ and $s<-\frac{1}{4}$;
			\item[(b)] $r=1$, $s<\frac12$ and $\wh{w_1}(0,\cdot)=0$;
			\item[(c)] $r=1$, $s\in\m{R}$, but without the restriction $\wh{w_1}(0,\cdot)=0$;
			\item[(d)] $r\in[\frac14,\infty)\setminus\{1\}$ and $s<s_r$.
		\end{itemize}
	\end{proposition}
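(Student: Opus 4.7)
The plan is to follow the standard duality-plus-Plancherel reduction used throughout the paper (cf.\ the discussion around (\ref{weighted l2 form, div, general}) and Lemma \ref{Lemma, bilin to weighted l2}), which at the critical value $b=\tfrac12$ recasts (\ref{bilin est, nondiv 2, reg}) as the weighted $\ell^2$-type bound
\[
\i_A \frac{|k_2|\,\la k_3\ra^s\prod_{i=1}^3 |f_i(k_i,\tau_i)|}{\la k_1\ra^s\la k_2\ra^s\la L_1\ra^{1/2}\la L_2\ra^{1/2}\la L_3\ra^{1/2}}\le C\prod_{i=1}^3\|f_i\|_{L^2(\m{Z}\times\m{R})},
\]
where $A=\{\sum k_i=\sum\tau_i=0\}$, $L_1=\tau_1-\phi^{\a_1}(k_1)$, $L_2=\tau_2-\phi^{\a_2}(k_2)$, $L_3=\tau_3-\phi^{\a_1}(k_3)$. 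The spatial weight of interest is $\wt R_1:=|k_2|\la k_3\ra^s\la k_1\ra^{-s}\la k_2\ra^{-s}$ and the governing resonance function is $H_2$ from (\ref{H2 compact}). The informal criterion $\wt R_1\ls\la H_2\ra^{1/2}$ (extracted by placing each $f_i$ on a unit $\tau$-neighborhood of the relevant paraboloid) then controls whether the weighted $\ell^2$ bound can hold; the $b\ne\tfrac12$ part of the conclusion is extracted afterwards by the standard Kenig-Ponce-Vega argument (cf.\ Lemma \ref{Lemma, kpv}) once the $b=\tfrac12$ failure is in hand.

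Three of the four cases are handled by explicit frequency triples $(k_1^\ast,k_2^\ast,k_3^\ast)$ summing to zero. For case (a), $r<\tfrac14$ forces $|h_r(x)|\gs 1+x^2$, so (\ref{H2 compact}) with $k_1^\ast=-N$, $k_2^\ast=N$, $k_3^\ast=0$ yields $|H_2|\sim N^3$ and $\wt R_1\sim N^{1-2s}$, making the criterion $s\ge-\tfrac14$ sharp. For case (b), $r=1$ together with the mean-zero hypothesis $\wh{w_1}(0,\cdot)=0$ still admits $k_1^\ast=-k_2^\ast=N$, $k_3^\ast=0$; this sits on the resonance set so $H_2\equiv 0$, while $\wt R_1\sim N^{1-2s}$ blows up unless $s\ge\tfrac12$. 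Case (c) is the easiest: dropping the mean-zero constraint on $w_1$ allows $w_1$ to depend only on $t$ (so $\mcal{F}_x w_1$ is supported at $k_1=0$), which collapses (\ref{bilin est, nondiv 2, reg}) to the linear bound $\|\p_x w_2\|_{X^{\a_1}_{s,b-1}}\ls\|w_2\|_{X^{\a_1}_{s,b}}$ (with $\a_1=\a_2$ when $r=1$), shown to fail for every $s$ and $b$ in Proposition \ref{Prop, fail of lin est}.

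The heart of the proof is case (d), which leverages the Diophantine input. From the factorization $H_2=-3\a_1 k_2^3\bigl(\tfrac{k_1}{k_2}-x_{1r}\bigr)\bigl(\tfrac{k_1}{k_2}-x_{2r}\bigr)$ and the fact (\ref{IE of root}) that the roots $x_{1r},x_{2r}$ of $h_r$ in (\ref{roots}) share irrationality exponent $\sigma_r$, controlling $H_2$ for integer $(k_1,k_2)$ is equivalent to bounding how well $\tfrac{k_1}{k_2}$ approximates $x_{1r}$ (or $x_{2r}$). When $\sqrt{12r-3}\in\m{Q}$ the root $x_{1r}$ is rational, so infinitely many pairs $(m_j,n_j)$ with $n_j\to\infty$ realize $m_j/n_j=x_{1r}$ exactly; setting $k_1^\ast=m_j$, $k_2^\ast=n_j$, $k_3^\ast=-(m_j+n_j)$ yields $H_2\equiv 0$ with $|k_1^\ast|\sim|k_2^\ast|\sim|k_3^\ast|\sim n_j$ (since $x_{1r}\notin\{-1,0\}$ for $r\ne 1$), and $\wt R_1\sim n_j^{1-s}$ forces $s\ge 1=s_r$. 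Otherwise, Lemma \ref{Lemma, seq for IE} applied to any $\mu<\sigma_r$ supplies $(m_j,n_j)$ with $|x_{1r}-m_j/n_j|<n_j^{-\mu}$; the same frequency choice produces $|H_2|\ls n_j^{\max(3-\mu,\,0)}$ and $\wt R_1\sim n_j^{1-s}$, forcing failure whenever $s<\min\{1,\,(\mu-1)/2\}$. Letting $\mu\uparrow\sigma_r$ recovers failure for every $s<s_r$, covering both $2\le\sigma_r<3$ and $\sigma_r\ge 3$ at once. The boundary case $r=\tfrac14$ is handled directly: $h_r$ has the double root $-\tfrac12$, and $(k_1^\ast,k_2^\ast)=(-n,2n)$ annihilates $H_2$ identically.

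The main technical obstacle lies in case (d): one must verify that $k_1^\ast/k_2^\ast$ approaching $x_{1r}$ keeps $k_3^\ast=-(k_1^\ast+k_2^\ast)$ of order $n_j$ in absolute value and keeps $|k_1^\ast/k_2^\ast-x_{2r}|$ bounded away from zero, so that the quadratic factorization really delivers the advertised $|H_2|\ls n_j^{3-\mu}$ and the spatial weight really behaves as $n_j^{1-s}$ rather than something softer. Both are immediate from $x_{1r}\ne x_{2r}$ and $x_{1r}\notin\{-1,0\}$ for $r\in(\tfrac14,\infty)\setminus\{1\}$. A minor subsidiary issue is, for the $b\ne\tfrac12$ portion of the conclusion, adapting the linear counterexample of Lemma \ref{Lemma, kpv} to the nondivergence setting; this is routine and parallel to the treatment in the companion Propositions \ref{Prop, sharp bilin est, div 1}--\ref{Prop, sharp bilin est, nondiv 1}.
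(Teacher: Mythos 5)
Your frequency triples are the right ones — $(N,-N,0)$ for (a), the same for (b), $k_1=0$ for (c), and the Diophantine pair $(m_j,n_j,-(m_j+n_j))$ for (d) — and the $\wt R_1\lesssim\la H_2\ra^{1/2}$ bookkeeping at $b=\tfrac12$ gives the right thresholds. The gap is in how you dispose of general $b$. You claim that once $b=\tfrac12$ failure is in hand, the $b\neq\tfrac12$ part is "extracted afterwards by the standard Kenig--Ponce--Vega argument" and is "routine"; but the proposition asserts failure for \emph{every} $b$, and a single counterexample at fixed $b=\tfrac12$ does not deliver that. In the paper each case is run with \emph{two} complementary examples — one placing the resonance gain on $L_3$ and one placing it on $L_2$ — yielding two inequalities of the form $1-2s-3(1-b)\le 0$ and $1-2s-3b\le 0$ (case (a)), or in the Diophantine case $1-s-\zeta_r(1-b)\le 0$ and $1-s-\zeta_r b\le 0$ (case (d)); only their \emph{sum} eliminates $b$ and forces $s\ge -\tfrac14$ resp.\ $s\ge 1-\zeta_r/2$. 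You never introduce the second counterexample.

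This is not a cosmetic omission, and case (d) is exactly where it bites. For (ND2) one cannot pin $b=\tfrac12$ by preliminary nonresonant tests the way one can for (D2): the two preliminary examples ($k_1=0$, $k_2=N$, $k_3=-N$ with either $L_3$ or $L_2$ made large via $|H_2|\sim N^3$) only yield $\tfrac13\le b\le\tfrac23$, not $b=\tfrac12$. For $b$ near $\tfrac13$, the single constraint $1-s-\zeta_r(1-b)\le 0$ from your one Diophantine example permits $s$ well below $s_r$, so no contradiction is reached. You must also construct the symmetric example in which the Diophantine gain $|H_2|\lesssim n_j^{\zeta_r}$ is assigned to $\la L_2\ra$ (i.e.\ $B_1$ and $B_3$ are unit boxes on the curves, $B_2$ is the sum), and then add the two inequalities. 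A secondary remark: your case (c) reduction to Proposition \ref{Prop, fail of lin est} via a spatially constant $w_1$ needs mollification (a genuine constant has infinite $X$-norm), so it is cleaner to use the direct counterexample $B_1=\{k_1=0,\,|\tau_1|\le 1\}$ already present in the companion Proposition \ref{Prop, sharp bilin est, div 2}(c).
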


	\section{Proofs of the bilinear estimates}
	\label{Sec, proof of bilin}
	The goal of this section is to prove Lemma \ref{Lemma, bilin est, nondiv 1} and Lemma \ref{Lemma, bilin est, nondiv 2}. Since their proofs are similar, we will only justify Lemma \ref{Lemma, bilin est, nondiv 2}.  
	
	\subsection{Idea of the proof} 
	\label{Subsec, idea of bilin est}
	Without loss of generality,  we consider the following simpler version of (\ref{bilin est, nondiv 2}) with $\lam=1$,
	\be\label{bilin est, nd2, idea}
	\|w_{1}(\p_{x}w_{2})\|_{X^{\a_{1}}_{s,-\frac12}}\ls \| w_{1}\|_{X^{\a_{1}}_{s,\frac12}}\| w_{2}\|_{X^{\a_{2}}_{s,\frac12}}.\ee
	By duality and Plancherel identity, in order to verify (\ref{bilin est,  nd2, idea}), it suffices to prove (see \cite{Tao01} or Lemma \ref{Lemma, bilin to weighted l2})
	\be\label{weighted l2 form, nd2, idea}
	\i_{A}\frac{|k_{2}|\la k_{3}\ra^{s}\prod\limits_{i=1}^{3}|f_{i}(k_{i},\tau_{i})|}{\la k_{1}\ra^{s}\la k_{2}\ra^{s}\la L_{1}\ra^{\frac12}\la L_{2}\ra^{\frac12}\la L_{3}\ra^{\frac12}} \leq C\,\prod_{i=1}^{3}\|f_{i}\|_{L^{2}(\m{Z}\times\m{R})}, \quad\forall\,\{f_{i}\}_{1\leq i\leq 3},\ee
	where $\la\cdot\ra=1+|\cdot|$,  $A=\Big\{(k_1,k_2,k_3,\tau_1,\tau_2,\tau_3)\in \m{Z}^{3}\times\m{R}^{3}:\sum\limits_{i=1}^{3}k_{i}=\sum\limits_{i=1}^{3}\tau_{i}=0\Big\}$ and 
	\[L_{1}=\tau_{1}-\phi^{\a_{1}}(k_{1}),\quad L_{2}=\tau_{2}-\phi^{\a_{2}}(k_{2}), \quad L_{3}=\tau_{3}-\phi^{\a_{1}}(k_{3}),\]
	where $\phi^{\a}(k)=\a k^3$ is  as defined  in (\ref{phase fn}). 
	
	In (\ref{weighted l2 form, nd2, idea}), the loss of the spatial derivative in the bilinear estimate (\ref{bilin est, nd2, idea}) is reflected via the term $\dfrac{|k_{2}|\la k_{3}\ra^{s}}{\la k_{1}\ra^{s}\la k_{2}\ra^{s}}$ and the gain of the time derivative is reflected via the term $\la L_{1}\ra^{\frac12}\la L_{2}\ra^{\frac12}\la L_{3}\ra^{\frac12}$. How to compensate the loss of the spatial derivative from the gain of the time derivative is the key point. Denote 
	\[R_{1}=\frac{|k_{2}|\la k_{3}\ra^{s}}{\la k_{1}\ra^{s}\la k_{2}\ra^{s}}\quad\text{and}\quad R_{2}=\la L_{1}\ra^{\frac12}\la L_{2}\ra^{\frac12}\la L_{3}\ra^{\frac12}.\]
	Then we need to control $R_{1}$ by $R_{2}$. Since $\sum\limits_{i=1}^{3}k_{i}=0$, then $\la k_{3}\ra\leq \la k_{1}\ra\la k_{2}\ra$. As a result, $R_{1}$ is decreasing in $s$, which means the larger $s$ is, the more likely the bilinear estimate will hold. So the interest lies in the search for the smallest $s$ such that the bilinear estimate holds. Noticing that $L_{i}$ contains the time variable $\tau_{i}$, a single $L_{i}$ alone  can barely have any contributions to control $R_1$.  On the other hand, as  $\sum\limits_{i=1}^{3}\tau_{i}=0$, 
	\[\sum_{i=1}^{3}L_{i}=-\big[\phi^{\a_1}(k_1)+\phi^{\a_2}(k_2)+\phi^{\a_1}(k_3)\big],\]
	is a function of $\{k_{i}\}_{i=1,2,3}$ only.  Because of this,  we define the resonance function $H_2$ as in Definition \ref{Def, res fcn}:  
	\be\label{res fn, nd2, idea}
	H_2(k_{1},k_{2},k_{3}):=\phi^{\a_1}(k_1)+\phi^{\a_2}(k_2)+\phi^{\a_1}(k_3).\ee
	Since $R_{2}\gs \la H_2\ra^{\frac12}$, then it is easier to control $R_1$ by $R_2$ in the region where $H_2$ is large.  The situation becomes more complicated near the region where $H_2$ vanishes.  The zero set of $H_2$ is called  the {\it resonance set} as in Definition \ref{Def, res fcn}. 
	
	By writing $k_{3}=-k_1-k_2$ in (\ref{res fn, nd2, idea}) and simplifying,
	\be\label{H2}
	H_{2}(k_1,k_2,k_3)=(\a_2-\a_1)k_{2}^{3}-3\a_{1}k_{1}k_{2}^{2}-3\a_{1}k_{1}^{2}k_{2}, \quad\forall\, \sum_{i=1}^{3}k_{i}=0.\ee
	If $k_{2}=0$, then $H_{2}(k_1,k_2,k_3)=0$. If $k_{2}\neq 0$, then $H_{2}$ can be rewritten as
	\be\label{H2 compact}
	H_{2}(k_1,k_2,k_3)=-3\a_{1}k_{2}^{3}\,h_{r}\Big(\frac{k_1}{k_2}\Big), \quad\forall\, \sum_{i=1}^{3}k_{i}=0 \text{\;with\;} k_2\neq 0,\ee
	where 
	\be\label{h}
	h_{r}(x)=x^{2}+x+\frac{1-r}{3}.\ee
	The following is the classification of the roots of $h_{r}$ depending on the values of $r$.
	\begin{enumerate}[(1)]
		\item $r<\frac14$: $h_{r}(x)$ does not have real roots;
		
		\item $r=\frac14$: $h_{r}(x)$ has a unique root $-\frac12$;
		
		\item $r>\frac14$ but $ r\neq 1$: $h_{r}(x)$ has two roots, neither of which equals -1 or 0;
		
		\item $r=1$: $h_{r}(x)$ has two roots -1 and 0.
	\end{enumerate}
	
	Among the above four situations, Case (3) is the most interesting one, so we will first focus on this case.  Assume $r\in\big(\frac14,\infty\big)\setminus\{1\}$ and denote the two roots of $h_{r}(x)$ as $x_{1r}$ and $x_{2r}$, i.e.
	\be\label{roots}
	x_{1r}:=-\frac12-\frac{\sqrt{12r-3}}{6},\quad   x_{2r}:=-\frac12+\frac{\sqrt{12r-3}}{6}.\ee
	Define $\sigma_{r}$ and $s_{r}$ as in (\ref{sharp index by IE}). Then it follows from Proposition \ref{Prop, inv of IE} that 
	\be\label{IE of root}
	\mu(x_{1r})=\mu(x_{2r})=\mu(\sqrt{12r-3})=\sigma_{r}.\ee
	%The formulas (\ref{H1 compact}), (\ref{H2 compact}) and (\ref{IE of root}) can be used as another way to illustrate the point in Remark \ref{Re, same index} which claims that the regularity thresholds for (\ref{M-B, bilin, d1}) and (\ref{M-B, bilin, d2}) are the same when $a_2\in(0,4]\backslash\{1\}$. 
	In addition, the resonance function $H_2$ can be written as 
	\[H_{2}(k_{1},k_{2},k_{3})=-3\a_1 k_2^3\Big(\frac{k_1}{k_2}-x_{1r}\Big)\Big(\frac{k_1}{k_2}-x_{2r}\Big).\]
	As a result, the resonance set consists of three lines: $k_{2}=0$, $k_{1}=x_{1r}k_{2}$ and $k_{1}=x_{2r}k_{2}$. The most difficult estimate is near the line $k_{1}=x_{1r}k_{2}$ or $k_{1}=x_{2r}k_{2}$. Without loss of generality, let us consider the region near the line $ k_1=x_{1r}k_2 $. In this situation, $\big|\frac{k_1}{k_2}-x_{2r}\big|\approx |x_{1r}-x_{2r}|$ which is a positive constant. Hence, 
	\be\label{H2, res, intro}
	|H_2(k_1,k_2,k_3)|\sim |k_2|^{3}\Big|x_{1r}-\frac{k_1}{k_2}\Big|.\ee
	In addition, when $\frac{k_1}{k_2}$ is very close to $x_{1r}$, we have $|k_1|\sim |k_2|\sim |k_3|$ . Consequently, 
	\be\label{loss of spatial deriv}
	R_{1}\sim |k_2|^{1-s}.\ee
	On the other hand, noticing that both $k_1$ and $k_2$ are integers, the estimate of $\big|x_{1r}-\frac{k_1}{k_2}\big|$ reduces to the problem of the Diophantine approximation of $x_{1r}$. 
	\begin{itemize}
		\item If $x_{1r}\in\m{Q}$ or if its irrationality exponent $\sigma_{r}=\mu(x_{1r})\geq 3$, then $s_r=1$ and there exist infinitely many $(k_1,k_2)$ such that 
		\[\Big|x_{1r}-\frac{k_1}{k_2}\Big|=0 \text{\;\; or \;\;} \Big|x_{1r}-\frac{k_1}{k_2}\Big|\ls \frac{1}{k_2^3}.\]
		So $\la H_2(k_1,k_2,k_3)\ra\sim 1$ due to (\ref{H2, res, intro}). Then in order to have $|R_1|\ls \la H_2\ra^{\frac12}$, it follows from (\ref{loss of spatial deriv}) that $s\geq 1=s_r$.

		\item If $2\leq \sigma_r<3$, then $s_r=\frac{\sigma_r-1}{2}$ and it follows from Lemma \ref{Lemma, bdd approx beyond IE} that for any $\eps>0$ and for any integers $k_1$ and $k_2$,
		\[\Big|x_{1r}-\frac{k_1}{k_2}\Big|\gs \frac{1}{|k_2|^{\sigma_r+\eps}}.\]
		So it follows from (\ref{H2, res, intro}) that $|H_2(k_1,k_2,k_3)|\gs |k_2|^{3-\sigma_r-\eps}$. Then in order to have $|R_1|\ls \la H_2\ra^{\frac12}$, we need $1-s\leq (3-\sigma_r-\eps)/2$, that  is 
		\[s\geq \frac{\sigma_r-1}{2}+\frac{\eps}{2}=s_r+\frac{\eps}{2}.\]
	\end{itemize}
	The above argument explains why the critical index  is $s_{r}$ when $r\in(\frac14,\infty)\setminus\{1\}$.  Next, we will briefly discuss the rest cases (1), (2) and (4). 
	\begin{itemize}
		\item When $r=\frac14$,  the argument is similar to the above. The two roots of $h_r(x)$ are the same: $x_{1r}=x_{2r}=-\frac12$. So $s_r=\sigma_r=1$. Meanwhile, $H_2(k_1,k_2,k_3)=-3\a_1 k_2^3\big(\frac{k_1}{k_2}+\frac12\big)^2$. So there exist infinitely many integer pairs $(k_1,k_2)$ such that $\frac{k_1}{k_2}+\frac12=0$, which implies $|H_2(k_1,k_2,k_3)|=0$. In order to have $|R_1|\ls \la H_2\ra^{\frac12}$, it follows from (\ref{loss of spatial deriv}) that $s\geq 1=s_r$.
		
		\item When $r<\frac14$, $|h_r(x)|\gs 1+x^2$, so it follows from (\ref{H2 compact}) that 
		\be\label{lower bdd for H2, intro}
		|H_2(k_1,k_2,k_3)|\gs |k_2|\sum_{i=1}^{3}k_i^2.\ee
		For $s<0$, the worst situation is when $|k_1|\sim |k_2|\gg 1$ and $|k_3|\ls 1$. In this situation, $R_1\sim |k_2|^{1-2s}$ and $|H_2|\sim |k_2|^{3}$. So in order to ensure $R_1\ls \la H_2\ra^{\frac12}$, $s$ needs to be at least $-\frac14$.
		
		\item When $r=1$, $|H_2(k_1,k_2,k_3)|\sim |k_1 k_2 k_3|$.  The worst situation occurs when $k_1=-k_2$ and $k_3=0$, which implies $H_2=0$ and $R_1\sim |k_2|^{1-2s}$. So in order to have $R_1\ls \la H_2\ra^{\frac12}=1$, $s$ should be at least $\frac12$.
	\end{itemize}

	\subsection{Auxiliary results}
	For any vector $(\vec{k},\vec{\tau})\in \m{Z}_{\lam}^{3}\times\m{R}^{3}$, we denote it as $(\vec{k},\vec{\tau})=(k_{1},k_{2},k_{3},\tau_{1},\tau_{2},\tau_{3})$. For any $\lam \geq 1$,  
	\be\label{int domain}
	A_{\lam}: =\Big\{(\vec{k},\vec{\tau})\in \m{Z}_{\lam}^{3}\times\m{R}^{3}:\sum_{i=1}^{3}k_{i}=\sum_{i=1}^{3}\tau_{i}=0\Big\},\ee
	and for any given $(k_{3},\tau_{3})\in \m{Z}_{\lam}\times\m{R}$, 
	\be\label{int domain, line}
	A_{\lam}(k_{3},\tau_{3}):=\big\{(k_{1},k_{2},\tau_{1},\tau_{2})\in \m{Z}_{\lam}^{2}\times\m{R}^{2}: (\vec{k},\vec{\tau})\in A_{\lam}\big\}.\ee

	\begin{lemma}\label{Lemma, bilin to weighted l2}
		Let $\lam\geq 1$, $s\in\m{R}$ and $\a_{i}\in \m{R}^{*}$ for $1\leq i\leq 3$. The bilinear estimate 
		\be\label{bilin, general}
		\|w_{1}(\p_{x}w_{2})\|_{Z_{s,\lam}^{\a_{3}}}\leq C\lam^{p}\|w_{1}\|_{X^{\a_{1}}_{s,\frac{1}{2},\lam}}\|w_{2}\|_{X^{\a_{2}}_{s,\frac{1}{2},\lam}}, \quad\forall\, \{w_{i}\}_{i=1,2}\ee
		holds if and only if the following two estimates hold, 
		\be\label{weighted l2, general, reg}
		\i_{A_{\lam}}\frac{k_{2}\la k_{3}\ra^{s}}{\la k_{1}\ra^{s}\la k_{2}\ra^{s}}\prod_{i=1}^{3}\frac{f_{i}(k_{i},\tau_{i})}{\la L_{i}\ra^{\frac{1}{2}}}\leq C\lam^{p}\prod_{i=1}^{3}\|f_{i}\|_{L^{2}(\m{Z}_{\lam}\times\m{R})}, \quad\forall\, \{f_{i}\}_{i=1,2,3}\ee
		and 
		\be\label{weighted l2, general, aux}
		\bigg\| \frac{1}{\la L_{3}\ra}\i_{A_{\lam}(k_{3},\tau_{3})}\frac{k_{2}\la k_{3}\ra^{s}}{\la k_{1}\ra^{s}\la k_{2}\ra^{s}}\prod_{i=1}^{2}\frac{f_{i}(k_{i},\tau_{i})}{\la L_{i}\ra^{\frac{1}{2}}}\bigg\|_{L^{2}_{k_{3}}L^{1}_{\tau_{3}}}
		\leq C\lam^{p}\prod_{i=1}^{2}\| f_{i}\|_{L^{2}(\m{Z}_{\lam}\times\m{R})}, \quad\forall\, \{f_{i}\}_{i=1,2},\ee
		where $L_{i}=\tau_{i}-\phi^{\a_{i}}(k_{i})$.
	\end{lemma}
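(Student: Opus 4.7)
The proof is a standard bookkeeping exercise that decomposes the $Z$-norm into its two defining pieces per (\ref{Z space}) and handles each via Plancherel, duality, and the convolution identity for $w_1 w_2$. Unpacking the definition of $Z^{\a_3,\b_3}_{s,\lam}$, the estimate (\ref{bilin, general}) is equivalent to the simultaneous validity of
\[
\|\p_x(w_1 w_2)\|_{X^{\a_3,\b_3}_{s,-\frac12,\lam}} \leq C\lam^{0+}\|w_1\|_{X^{\a_1,\b_1}_{s,\frac12,\lam}}\|w_2\|_{X^{\a_2,\b_2}_{s,\frac12,\lam}} \qquad (\star)
\]
and
\[
\left\| \frac{\la k_3\ra^{s}\, k_3\, \wh{w_1 w_2}(k_3,\tau_3)}{\la \tau_3-\phi_3(k_3)\ra}\right\|_{L^2(\m{Z}_\lam)L^1(\m{R})} \leq C\lam^{0+}\|w_1\|_{X^{\a_1,\b_1}_{s,\frac12,\lam}}\|w_2\|_{X^{\a_2,\b_2}_{s,\frac12,\lam}}, \qquad (\star\star)
\]
so it suffices to show $(\star)\Leftrightarrow(\ref{weighted l2, general, reg})$ and $(\star\star)\Leftrightarrow(\ref{weighted l2, general, aux})$.

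For the first equivalence, I would invoke the duality identification $(X^{\a_3,\b_3}_{s,-\frac12,\lam})^{*}=X^{\a_3,\b_3}_{-s,\frac12,\lam}$ relative to the natural $L^2(\m{T}_\lam\times\m{R})$ pairing. Testing $(\star)$ against $w_3$, applying Plancherel, and performing the sign flip on the frequency variables of $w_3$ that recasts the convolution constraint $k_1+k_2=k_3$ as $\sum_{i=1}^3 k_i=0$ (and similarly for $\tau_i$), $(\star)$ becomes
\[
\left|\,\i_{A_\lam} k_3\prod_{i=1}^3 \wh{w_i}(k_i,\tau_i)\,\right| \ls \lam^{0+}\|w_1\|_{X^{\a_1,\b_1}_{s,\frac12,\lam}}\|w_2\|_{X^{\a_2,\b_2}_{s,\frac12,\lam}}\|w_3\|_{X^{\a_3,\b_3}_{-s,\frac12,\lam}}.
\]
Setting $f_i(k_i,\tau_i):=\la k_i\ra^{s_i}\la L_i\ra^{\frac12}|\wh{w_i}(k_i,\tau_i)|$ with $s_1=s_2=s$ and $s_3=-s$ gives $\|f_i\|_{L^2(\m{Z}_\lam\times\m{R})}=\|w_i\|_X$, and taking absolute values inside the integral (lossless since we are free to replace each $w_i$ by $\mcal{F}^{-1}|\wh{w_i}|$) converts the bound into (\ref{weighted l2, general, reg}). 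The converse is obtained by specializing $\wh{w_i}=f_i/(\la k_i\ra^{s_i}\la L_i\ra^{\frac12})$ in (\ref{weighted l2, general, reg}).

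For the second equivalence, I would substitute the convolution formula
\[
\wh{w_1 w_2}(k_3,\tau_3) \;=\; \frac{1}{2\pi}\i_{A_\lam(k_3,\tau_3)}\wh{w_1}(-k_1,-\tau_1)\,\wh{w_2}(-k_2,-\tau_2)
\]
(the sign flips realize the constraint $\sum k_i=\sum\tau_i=0$) directly into the left-hand side of $(\star\star)$, bound by absolute values, and apply the same substitution $f_i=\la k_i\ra^s\la L_i\ra^{\frac12}|\wh{w_i}|$ for $i=1,2$. This turns $(\star\star)$ into exactly (\ref{weighted l2, general, aux}), and the reverse direction is again obtained by specializing the $f_i$.

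The only genuine obstacle is tracking signs and normalization constants when passing between $\wh{w_1 w_2}$ and the convolution, and between the $X_{s,-\frac12}$-norm and its dual description; once this bookkeeping is in place, the lemma follows along the same lines as Proposition 6.1 of Tao \cite{Tao01}.
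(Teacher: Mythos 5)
Your proof is correct and follows essentially the same route as the paper: split the $Z$-norm into its $X_{s,-\frac12}$ piece and the $L^2_k L^1_\tau$ piece, reduce each to a weighted $\ell^2L^2$ estimate by the substitution $f_i = \la k_i\ra^{s_i}\la L_i\ra^{1/2}|\wh{w_i}|$, duality for the first piece and the convolution formula plus the change of variable $(k_3,\tau_3)\mapsto -(k_3,\tau_3)$ for the second. The paper simply cites its companion real-line lemma for the first equivalence and writes out only the second, so your version supplies details the paper omits but is not structurally different.
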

	
	This lemma is essentially proved in \cite{Tao01, CKSTT03} by using duality, Plancherel identity and definition (\ref{Z space}), so we omit the details here. 
	
	\begin{lemma}\label{Lemma, key}
		Let $\lam\geq 1$, $q\geq \frac13$ and $\a_i\in\m{R}^*$ for $1\leq i\leq 3$. Then there exists a constant $C=C(\a_{1},\a_{2},\a_{3},p)$ such that for any functions $\{f_{i}\}_{i=1}^{3}$ on $\m{Z}_{\lam}\times\m{R}$, 
		\be\label{key lemma-reg}
		\i_{A_{\lam}} M^{q}\prod_{i=1}^{3}\frac{|f_{i}(k_{i},\tau_{i})|}{\la L_{i}\ra^{q}}\leq C\prod_{i=1}^{3}\|f_{i}\|_{L^{2}(\m{Z}_{\lam}\times\m{R})}, \ee
		where $L_{i}=\tau_{i}-\phi^{\a_{i}}(k_{i})$ and $M:=\max\big\{\la L_1\ra,\la L_2\ra, \la L_3\ra\big\}$. In particular, if $H(k_1,k_2,k_3)$ denotes the resonance function as defined in Definition \ref{Def, res fcn}, then 
		\[\i_{A_{\lam}} \la H(k_1,k_2,k_3)\ra^{q}\prod_{i=1}^{3}\frac{|f_{i}(k_{i},\tau_{i})|}{\la L_{i}\ra^{q}}\leq C\prod_{i=1}^{3}\|f_{i}\|_{L^{2}(\m{Z}_{\lam}\times\m{R})}.\]
	\end{lemma}
	
	This lemma is also standard (e.g. see \cite{Bou93b, CKSTT03, Oh09}),  but the statement is a little bit different from that in the literature, so we will briefly include a proof here for the convenience of the readers.
	
	\begin{proof}[Proof of Lemma \ref{Lemma, key}]
		Since $H(k_1,k_2,k_3)=-\sum\limits_{i=1}^{3}L_i$, it is obvious that $\la H\ra\ls M$, so it suffices to prove (\ref{key lemma-reg}).
		\begin{itemize}
			\item Let's first focus on the region where $ M=\la L_1\ra $. In this region, $ \la L_1\ra^{q}\gs  M^{q}$. Define $ g_1=\mcal{F}^{-1}\big(f_1(k_1,\tau_1)\big) $ and
			\[ g_i=\mcal{F}^{-1}\bigg(\frac{|f_i(k_i,\tau_i)|}{\la L_i\ra^{q}}\bigg), \quad i=2,3. \]
			Then 
			\begin{align*}
				\text{LHS of (\ref{key lemma-reg})} &\ls \int [\mcal{F}(g_1)](k_1,\tau_1)[\mcal{F}(g_2)](k_2,\tau_2)[\mcal{F}(g_3)](k_3,\tau_3)\\
				&\ls \| g_1\|_{L^2(\m{T}_{\lam}\times\m{R})} \| g_2\|_{L^4(\m{T}_{\lam}\times\m{R})} \| g_3\|_{L^4(\m{T}_{\lam}\times\m{R})}.
			\end{align*}
			As a result, it follows from Lemma \ref{Lemma, L4 est} and $q\geq \frac13$ that 
			\[ \| g_1\|_{L^2} \| g_2\|_{L^4} \| g_3\|_{L^4}\ls \| g_1\|_{L^2}\| g_2\|_{X^{\a_2}_{0,\frac13,\lam}} \| g_3\|_{X^{\a_3}_{0,\frac13,\lam}}\ls \prod_{i=1}^{3}\| f\|_{L^2(\m{Z}_{\lam}\times\m{R})}.\]
			
			\item In the regions where $ M=\la L_2\ra $ or $ \la L_3\ra $, the argument is similar, so (\ref{key lemma-reg}) is justified.
		\end{itemize}
	\end{proof}

	\subsection{Proof of Lemma \ref{Lemma, bilin est, nondiv 2}}
	\label{Subsec, bilin, nondiv 2}
	As we have seen in Section \ref{Subsec, idea of bilin est}, the main ideas in the proofs for part (a)--part (d) in Lemma \ref{Lemma, bilin est, nondiv 2} are analogous. So we will only carry out the detailed proof for part (d) which is the most technical case.  The framework of the following proof is similar to that in \cite{Oh09}.

	\begin{proof}[\bf Proof of Part (d)]
		First, we recall from Proposition \ref{Prop, s_r}(b) that $s_{r}\geq \frac12$. In addition, it follows from  
		the assumption $s_r< 1$ and Proposition \ref{Prop, s_r}(a) that $r\neq \frac14$. The case of $\frac14<r<1$ is analogous to the case of $r>1$, so we will just assume $\frac14<r<1$ in the rest of the proof.
		
		Fix $s>s_r$ and $p>s_r$, according to Lemma \ref{Lemma, bilin to weighted l2}, it remains to prove
		\be\label{nd 2-d, l2-reg}
		\i_{A_{\lam}}\frac{|k_{2}|\la k_{3}\ra^{s}}{\la k_{1}\ra^{s}\la k_{2}\ra^{s}}\prod_{i=1}^{3}\frac{|f_{i}(k_{i},\tau_{i})|}{\la L_{i}\ra^{\frac{1}{2}}}\leq C\lam^{p}\prod_{i=1}^{3}\|f_{i}\|_{L^{2}(\m{Z}_{\lam}\times\m{R})}, \ee
		and 
		\be\label{nd 2-d, l2-aux}
		\bigg\| \frac{1}{\la L_{3}\ra}\i_{A_{\lam}(k_{3},\tau_{3})}\frac{|k_{2}|\la k_{3}\ra^{s}}{\la k_{1}\ra^{s}\la k_{2}\ra^{s}}\prod_{i=1}^{2}\frac{|f_{i}(k_{i},\tau_{i})|}{\la L_{i}\ra^{\frac{1}{2}}}\bigg\|_{L^{2}_{k_{3}}L^{1}_{\tau_{3}}}
		\leq C\lam^{p}\prod_{i=1}^{2}\|f_{i}\|_{L^{2}(\m{Z}_{\lam}\times\m{R})},\ee
		where 
		\[L_{1}=\tau_{1}-\phi_1(k_{1}),\quad L_{2}=\tau_{2}-\phi_2(k_{2}),\quad  L_{3}=\tau_{3}-\phi_1(k_{3}).\]
		Since $\frac{\la k_3\ra}{\la k_1\ra\la k_2\ra}\leq 1$, it suffices to consider the case when $s$ is sufficiently close to $s_r$. So we can just assume 
		\be\label{order or power}
		s_r<s<p<1. \ee
		Next, we will prove (\ref{nd 2-d, l2-reg}) first and then briefly mention the proof for (\ref{nd 2-d, l2-aux}).
		
		{\bf Proof of (\ref{nd 2-d, l2-reg})}.  First, in the region where $k_2=0$, the integrand on the LHS of (\ref{nd 2-d, l2-reg}) vanishes, so we only need to focus on the region where $k_2\neq 0$.  Recall the resonance function $H_2$ in (\ref{H2 compact}):
		\be\label{nd 2-d, res}
		H_{2}(k_1,k_2,k_3)=-3\a_{1}k_{2}^{3}\,h_{r}\Big(\frac{k_1}{k_2}\Big), \quad\forall\, \sum_{i=1}^{3}k_{i}=0 \text{\; with \;} k_2\neq 0,\ee
		where $h_{r}$ is defined in (\ref{h}). When $\frac14<r<1$, it follows from (\ref{roots}) that the two roots $x_{1r}$ and  $x_{2r}$ of $h_{r}$ satisfy $-1<x_{1r}<x_{2r}<0$. Hence, there exists a constant 
		$d_{r}\in\big(0,\frac18\big)$, which only depends on $r$, such that (see Figure \ref{Fig, roots})
		\be\label{location of roots} -1<x_{1r}-2d_r<x_{1r}+2d_r<x_{2r}-2d_r<x_{2r}+2d_r<0.\ee
		\begin{figure}[htbp]
			\centering
			\begin{tikzpicture}[scale=0.8]
				\draw[->] (-18,0) -- (1,0) node[anchor=north] {};
				\foreach \x in {-17,-14,-12,-10,-7,-5,-3,0}
				\draw(\x,0.1)--(\x,-0.1) node[below] {};
				\draw (-17,-0.1) node[below] {$-1$};
				\draw (-14,-0.1) node[below] {$x_{1r}-2d_{r}$};
				\draw (-12,-0.2) node[below] {$x_{1r}$};
				\draw (-10,-0.1) node[below] {$x_{1r}+2d_{r}$};
				\draw (-7,-0.1) node[below] {$x_{2r}-2d_{r}$};
				\draw (-5,-0.2) node[below] {$x_{2r}$};
				\draw (-3,-0.1) node[below] {$x_{2r}+2d_{r}$};
				\draw (0,-0.1) node[below] {$0$};
			\end{tikzpicture}
			\caption{location of $x_{1r}$ and $x_{2r}$}
			\label{Fig, roots}
		\end{figure}
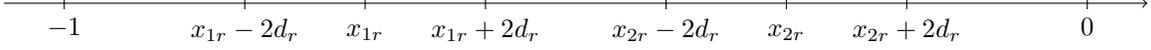
		
		\noindent On the other hand, there exists a constant $\eps_{r}$, which also only depends on $r$, such that 
		\be\label{large h}
		|h_{r}(x)|\geq \eps_{r}(1+x^2) \text{\; if \;} |x-x_{1r}|\geq d_{r} \text{\; and \;} |x-x_{2r}|\geq d_{r}.\ee
		In the following, we denote 
		\[\text{MAX}=\max\big\{\la L_{1}\ra, \la L_2\ra, \la L_3\ra\big\}.\]
		Since $|H_2(k_1,k_2,k_3)|=\big|\sum\limits_{i=1}^{3}L_{i}\big|$, then $\la H_2(k_1,k_2,k_3)\ra\ls \text{MAX}$.

		{\bf Case 1.}  $\Big|\dfrac{k_1}{k_2}-x_{1r}\Big|\geq d_r$ and $\Big|\dfrac{k_1}{k_2}-x_{2r}\Big|\geq d_r$. In this case, it follows from (\ref{nd 2-d, res}) and (\ref{large h}) that
		\[|H_{2}(k_1,k_2,k_3)|\sim |k_{2}|^{3}\,\Big| h_{r}\Big(\frac{k_1}{k_2}\Big)\Big|\gs |k_2|(k_1^2+k_2^2). \]
		So $|k_2|^3\ls |H_2|\ls \text{MAX} $. In addition, $k_2\neq 0$ implies that $|k_2|\geq \frac{1}{\lam}$. Hence, $|k_2|^2\ls \lam (\text{MAX})$.  Recalling $\dfrac{\la k_{3}\ra^{s}}{\la k_{1}\ra^{s}\la k_{2}\ra^{s}}\leq 1$, so
		\begin{align*}
			\text{LHS of (\ref{nd 2-d, l2-reg})} \ls \lam^{\frac12}\i_{}(\text{MAX})^{\frac12}\prod_{i=1}^{3}\frac{|f_{i}(k_{i},\tau_{i})|}{\la L_{i}\ra^{\frac{1}{2}}}\ls \lam^{\frac12}\prod_{i=1}^{3}\|f_{i}\|_{L^{2}(\m{Z}_{\lam}\times\m{R})},
		\end{align*}
		where the last inequality is due to Lemma \ref{Lemma, key}. This verifies (\ref{nd 2-d, l2-reg}) since $p>s_{r}\geq \frac12$.
		
		{\bf Case 2.} $\Big|\dfrac{k_1}{k_2}-x_{1r}\Big|< d_r$ or $\Big|\dfrac{k_1}{k_2}-x_{2r}\Big|< d_r$. Without loss of generality, it is assumed that $\Big|\dfrac{k_1}{k_2}-x_{1r}\Big|< d_r$. Then it can be seen from (\ref{location of roots}) or Figure \ref{Fig, roots} that $|k_1|\sim |k_2|\sim |k_3|$ and $\Big|\dfrac{k_1}{k_2}-x_{2r}\Big|> 3d_r$. Meanwhile, if $|k_2|\ls 1$, then (\ref{nd 2-d, l2-reg}) is obviously true. So we will just assume $|k_2|\gg 1$. Thus, (\ref{nd 2-d, l2-reg}) reduces to 
		\be\label{nd 2-d, res-reg, reduced}
		\i_{}|k_{2}|^{1-s}\prod_{i=1}^{3}\frac{|f_{i}(k_{i},\tau_{i})|}{\la L_{i}\ra^{\frac{1}{2}}}\leq C\lam^{p}\prod_{i=1}^{3}\|f_{i}\|_{L^{2}(\m{Z}_{\lam}\times\m{R})}.\ee
		Denote 
		\be\label{extra reg}\eps_{s}=2s-2s_r,\ee
		then $\eps_{s}>0$.  Next, we will split the range of $\big|\frac{k_1}{k_2}-x_{1r}\big|$ further.
		
		{\bf Case 2.1.} $\frac{d_r}{\lam |k_2|}\leq \big|\frac{k_1}{k_2}-x_{1r}\big|<d_{r}$.  In this case,  
		\[|H_{2}(k_1,k_2,k_3)|\sim |k_2|^{3}\Big|\frac{k_1}{k_2}-x_{1r}\Big|\Big|\frac{k_1}{k_2}-x_{2r}\Big|\gs \frac{k_2^2}{\lam}.\]
		Then it follows from $|H_{2}|\ls \text{MAX}$ and $s>s_{r}\geq \frac12$ that $|k_{2}|^{1-s}\ls (\lam |H_{2}|)^{\frac{1-s}{2}}\ls \lam^{\frac14} (\text{MAX})^{\frac14}$. Hence, 
		\[\text{LHS of (\ref{nd 2-d, res-reg, reduced})}\ls \lam^{\frac14}\i_{}(\text{MAX})^{\frac14}\prod_{i=1}^{3}\frac{|f_{i}(k_{i},\tau_{i})|}{\la L_{i}\ra^{\frac{1}{2}}}\ls C\lam^{\frac14}\prod_{i=1}^{3}\|f_{i}\|_{L^{2}(\m{Z}_{\lam}\times\m{R})}\ls \text{RHS of (\ref{nd 2-d, res-reg, reduced})}.\]

		{\bf Case 2.2.} $\big|\frac{k_1}{k_2}-x_{1r}\big|< \frac{d_{r}}{\lam |k_2|}$. 
		
		Since $s_r<1$, it follows from (\ref{sharp index by IE}) that $s_r=\frac{\sigma_r-1}{2}$. In addition, by taking advantage of (\ref{IE of root}) and Lemma \ref{Lemma, bdd approx beyond IE}, there exists a constant $K=K(r,s)$ such that
		\[\Big| x_{1r}-\frac{k_1}{k_2}\Big|=\Big| x_{1r}-\frac{\lam k_1}{\lam k_2}\Big|\geq \frac{K}{|\lam k_2|^{\sigma_{r}+\eps_{s}}}.\]
		Combining $s_r=\frac{\sigma_r-1}{2}$  and (\ref{extra reg}) yields $\sigma_r+\eps_{s}=2s+1$. Therefore,
		\be\label{dist to root}
		\Big| x_{1r}-\frac{k_1}{k_2}\Big|\gs \frac{1}{|\lam k_2|^{2s+1}}\ee
		which implies
		\[|H_{2}(k_1,k_2,k_3)|\sim |k_2|^{3}\Big|\frac{k_1}{k_2}-x_{1r}\Big|\Big|\frac{k_1}{k_2}-x_{2r}\Big|\gs \lam^{-1-2s}|k_2|^{2-2s}.\]
		Then we obtain 
		\be\label{upper bdd for k_2}
		|k_2|^{1-s}\ls \lam^{\frac12+s}|H_2|^{\frac12}\ls \lam^{\frac12+s}(\text{MAX})^{\frac12}.\ee
		So (\ref{nd 2-d, res-reg, reduced}) reduces to 
		\be\label{nd 2-d, reduced, 2.2, reg}
		\lam^{\frac12}\i_{}(\text{MAX})^{\frac12}\prod_{i=1}^{3}\frac{|f_{i}(k_{i},\tau_{i})|}{\la L_{i}\ra^{\frac{1}{2}}}\leq C\lam^{p-s}\prod_{i=1}^{3}\|f_{i}\|_{L^{2}(\m{Z}_{\lam}\times\m{R})}.\ee
		
		\begin{itemize}
			\item {\bf Case 2.2.1.} $\la L_1\ra=\text{MAX}$. Then (\ref{nd 2-d, reduced, 2.2, reg}) reduces to 
			\be\label{nd 2-d, case 2.2, L_1 max}
			\lam^{\frac12}\i_{}\frac{1}{\la L_2\ra^{\frac12}\la L_3\ra^{\frac12}}\prod_{i=1}^{3}|f_{i}(k_{i},\tau_{i})|\leq C\lam^{p-s}\prod_{i=1}^{3}\|f_{i}\|_{L^{2}(\m{Z}_{\lam}\times\m{R})}.\ee
			For fixed $k_1$, it follows from $\big|\frac{k_1}{k_2}-x_{1r}\big|<\frac{d_r}{\lam |k_2|}$ that $k_{2}\in E_{r}(k_1)$ which is defined as follows.
			\be\label{set of k_2}
			E_{r}(k_1)=\Big\{k_2\in\m{Z}_{\lam}:\Big|k_2-\frac{k_1}{x_{1r}}\Big|<\frac{d_r}{|x_{1r}|\,\lam}\Big\}.\ee
			It is easily seen that the size $|E_{r}(k_1)|$ is at most $\frac{2d_r}{|x_{1r}|}$. Define 
			\be\label{res, g}
			g_{1}(k_1,\tau_{1})=|f_{1}(k_1,\tau_1)|,\quad g_{2}(k_2,\tau_2)=\frac{|f_{2}(k_2,\tau_2)|}{\la L_{2}\ra^{\frac12}},\quad g_{3}(k_3,\tau_3)=\frac{|f_{3}(k_3,\tau_3)|}{\la L_{3}\ra^{\frac12}}.\ee
			Since $(k_3,\tau_3)=-(k_1+k_2,\tau_1+\tau_2)$, the LHS of (\ref{nd 2-d, case 2.2, L_1 max}) is bounded above by
			\begin{eqnarray*}
				\lam^{\frac12}\int_{\m{Z}_{\lam}}\int_{\m{R}}g_{1}(k_1,\tau_1)\,\bigg(\int_{\m{Z}_{\lam}}\mb{1}_{E_{r}(k_1)}(k_2)\,\big[g_{2}(k_2,\cdot)\ast_{\tau} g_{3}(-k_1-k_2,\cdot)\big](-\tau_1)\,dk_2^{\lam}\bigg)\,d\tau_1\,dk_1^{\lam}.
			\end{eqnarray*}
			By the change of variable $(k_1,\tau_1)\rightarrow (-k_1,-\tau_1)$ and the definition (\ref{measure}) for the measure $dk_2^{\lam}$, the above quantity is seen to be dominated by 
			\be\label{bdd 3}
			\lam^{-\frac12}\|g_1\|_{L^{2}_{k_1}L^{2}_{\tau_1}}\Big\|\sum_{k_2\in E_{r}(-k_1)}\big[g_{2}(k_2,\cdot)\ast_{\tau} g_{3}(k_1-k_2,\cdot)\big](\tau_1)\Big\|_{L^{2}_{k_1}L^{2}_{\tau_1}}.\ee
			Since the size of the set $E_{r}(-k_1)$ is bounded by the constant $\frac{2d_r}{|x_{1r}|}$, then by applying the Plancherel identity, we obtain 
			
			\be\label{bdd 4}
			(\ref{bdd 3})\ls \lam^{-\frac12}\|g_1\|_{L^{2}_{k_1}L^{2}_{\tau_1}}\Big\|\sup_{k_2\in E_{r}(-k_1)} \big|\mcal{F}^{-1}_{t}g_{2}(k_2,t)\,\mcal{F}^{-1}_{t}g_{3}(k_1-k_2,t)\big|\Big\|_{L^{2}_{k_1}L^{2}_{t}}.\ee
			
			Noticing $\mcal{F}_{t}^{-1}=\mcal{F}_{x}\mcal{F}^{-1}$, then it follows from the definition of $\mcal{F}_x$ and Holder's inequality that 
			\be\label{sup of g_2}
			\big|\mcal{F}^{-1}_{t}g_{2}(k_2,t)\big|\ls \big\|\mcal{F}^{-1}g_{2}(x,t)\big\|_{L^{1}_{x}}\ls \lam^{\frac12}\big\|\mcal{F}^{-1}g_{2}(x,t)\big\|_{L^{2}_{x}}.\ee
			
			In addition, it is not difficult to show that for any fixed $t$, 
			\be\label{tedious est}
			\Big\|\sup_{k_2\in E_{r}(-k_1)}\big| \mcal{F}^{-1}_{t}g_{3}(k_1-k_2,t)\big|\Big\|_{L^{2}_{k_1}} \ls \big\|\mcal{F}^{-1}g_3(x,t)\big\|_{L^{2}_{x}}.\ee
			
			Then plugging (\ref{sup of g_2}) and (\ref{tedious est}) into (\ref{bdd 4}) yields 
			\[\text{RHS of (\ref{bdd 4})} \ls \|g_1\|_{L^{2}_{k_1}L^{2}_{\tau_1}}\Big\|\big\|\mcal{F}^{-1}g_{2}(x,t)\big\|_{L^{2}_{x}}\big\|\mcal{F}^{-1}g_3(x,t)\big\|_{L^{2}_{x}}\Big\|_{L^{2}_{t}}.
			\]
			Finally, by Holder's inequality and Lemma \ref{Lemma, L4t-2x},
			\begin{eqnarray*}
				\text{RHS of (\ref{bdd 4})} &\ls & \|g_1\|_{L^{2}_{k_1}L^{2}_{\tau_1}}\big\|\mcal{F}^{-1}g_{2}(x,t)\big\|_{L^{4}_{t}L^{2}_{x}}\big\|\mcal{F}^{-1}g_3(x,t)\big\|_{L^{4}_{t}L^{2}_{x}} \\
				&\ls & \|g_1\|_{L^{2}_{k_1}L^{2}_{\tau_1}}\|\mcal{F}^{-1}g_2\|_{X^{\a_2}_{0,\frac14,\lam}}\|\mcal{F}^{-1}g_3\|_{X^{\a_1}_{0,\frac14,\lam}}\\
				&\ls & \prod_{i=1}^{3}\|f_{i}\|_{L^{2}(\m{Z}_{\lam}\times\m{R})},
			\end{eqnarray*}
			where the last inequality is due to (\ref{res, g}).
			
			\item {\bf Case 2.2.2.} $\text{MAX}=\la L_{2}\ra$ or  $\la L_{3}\ra$.  The argument is similar to that in Case 2.2.1,  therefore omitted.
		\end{itemize}
		
		{\bf Proof of (\ref{nd 2-d, l2-aux})}. We basically reduce the proof of (\ref{nd 2-d, l2-aux}) to the following two cases. In different regions, we have the flexibility to choose which case to estimate.
		
		\begin{itemize}
			\item First, by Cauchy-Schwartz inequality and duality, the proof of (\ref{nd 2-d, l2-aux}) reduces to establish 
			\be\label{nd 2-d, aux, reduce 1}
			\i_{A_{\lam}(k_{3},\tau_{3})} \frac{|k_{2}|\la k_{3}\ra^{s}}{\la k_1\ra^{s}\la k_2\ra^{s}}\frac{\prod_{i=1}^{3}|f_{i}(k_i,\tau_i)|}{\la L_1\ra^{\frac12}\la L_2\ra^{\frac12}\la L_3\ra^{\frac12-}}\leq C\lam^{p}\prod_{i=1}^{3}\|f_{i}\|_{L^{2}(\m{Z}_{\lam}\times\m{R})}.\ee
			
			\item Secondly, for fixed $k_3$, if $L_{3}$ is restricted to some set $\O(k_{3})$, then it follows from Holder's inequality in $\tau_3$ that
			\[\text{LHS of (\ref{nd 2-d, l2-aux})}\ls 
			\Bigg\|\bigg(\i_{\m{R}}\frac{\mb{1}_{\{L_{3}\in\O(k_{3})\}}}{\la L_{3}\ra}\,d\tau_{3}\bigg)^{\frac{1}{2}}\bigg\|\i_{A_{\lam}(k_3,\tau_3)}\frac{|k_{2}|\la k_{3}\ra^{s}}{\la k_1\ra^{s}\la k_2\ra^{s}}\frac{|f_{1}f_{2}|}{\la L_{1}\ra^{\frac12}\la L_{2}\ra^{\frac12}\la L_{3}\ra^{\frac12}}\bigg\|_{L^{2}_{\tau_{3}}}\Bigg\|_{L^{2}_{k_{3}}}.\]
			So if we can prove 
			\be\label{small int of L_3}
			\i_{\m{R}}\frac{\mb{1}_{\{L_{3}\in\O(k_{3})\}}}{\la L_{3}\ra}\,d\tau_{3}\ls \lam^{p},\ee
			then (\ref{nd 2-d, l2-aux}) will follow from (\ref{small int of L_3}), duality and (\ref{nd 2-d, l2-reg}).
		\end{itemize}
		
		If $\la L_1\ra=\text{MAX}$ or $\la L_2\ra=\text{MAX}$ or $\la L_3\ra=\text{MAX}$  with $\la L_{1}\ra\la L_{2}\ra\gs \la L_{3}\ra^{\frac{1}{100}}$, then we can apply the similar argument as that in the proof of 
		(\ref{nd 2-d, l2-reg}) to justify (\ref{nd 2-d, aux, reduce 1}). 
		
		If $\la L_3\ra=\text{MAX}$  with $\la L_{1}\ra\la L_{2}\ra\ll \la L_{3}\ra^{\frac{1}{100}}$, then we will  justify (\ref{small int of L_3}).  Similar to the proof of (\ref{nd 2-d, l2-reg}), the most delicate situation is when $\frac{k_1}{k_2}$ is very near the root $x_{1r}$ or $x_{2r}$.  Without loss of generality, we will only investigate the region where $\big|\frac{k_1}{k_2}-x_{1r}\big|< \frac{d_{r}}{\lam |k_2|}$ in the rest argument (see the above Case 2.2). 
		
		Firstly, it follows from $\big|\frac{k_1}{k_2}-x_{1r}\big|< \frac{d_{r}}{\lam |k_2|}$ that 
		\[\Big|k_1+\frac{x_{1r}k_3}{1+x_{1r}}\Big|<\frac{d_r}{(1+x_{1r})\lam}.\]
		So when $k_3$ is fixed, the choice of $k_1$ is at most $\frac{2d_r}{1+x_{1r}}$. Secondly, 
		\[\la L_{3}+H_{2}\ra=\big\la L_{3}-\sum_{i=1}^{3}L_i\big\ra=\la L_1+L_2\ra\ll \la L_3\ra^{\frac{1}{100}}.\]
		As a result, $|H_2|\sim |L_3|\gg 1$ and $\la L_{3}+H_2\ra\ll |H_2|^{\frac{1}{100}}$. So there exists a small constant $\delta\in \big(0,\frac{1}{10}\big)$ such that $\la L_{3}+H_2\ra\leq \delta\,|H_2|^{\frac{1}{100}}$. Fix this $\delta$.  For any $k_3\in\m{Z}_{\lam}$, define
		\[\begin{split}
			\O^{\delta}(k_{3})&=\Big\{\eta\in\m{R}:  \exists\, k_{1},k_{2}\in\m{Z}_{\lam} \;\text{such that}\; \sum_{i=1}^{3}k_{i}=0, \Big|k_1+\frac{x_{1r}k_3}{1+x_{1r}}\Big|<\frac{d_r}{(1+x_{1r})\lam}\\ 
			&\qquad \text{and\;}\big\la\eta + H_2(k_1,k_2,k_3)\big\ra\leq \delta |H_2(k_1,k_2,k_3)|^{\frac{1}{100}}\Big\}.
		\end{split}\]
		Then $\tau_{3}-\phi^{\a_1}(k_3)=L_3\in\O^{\delta}(k_3)$. For any $M\geq 1$, define
		$\O^{\delta}_{M}(k_3)=\{\eta\in\O^{\delta}(k_3): M/2\leq |\eta|\leq 2M\}$.
		For any $\eta\in \O^{\delta}_{M}(k_3)$, the choices of $k_1$, as mentioned above, is at most $\frac{2d_r}{1+x_{1r}}$. On the other hand, it follows from 
		\[\big\la\eta + H_2(k_1,k_2,k_3)\big\ra\leq \delta |H_2(k_1,k_2,k_3)|^{\frac{1}{100}}\]
		that $|H_2|\sim \eta\sim M$. So $\big\la\eta + H_2(k_1,k_2,k_3)\big\ra\leq \delta M^{\frac{1}{100}}$. As a result, $\big|\O_{M}^{\delta}(k_3)\big|\ls M^{\frac{1}{100}}$. By the change of variable $\eta=\tau_{3}-\phi_{3}(k_{3})=L_{3}$,
		\begin{eqnarray*}
			\i_{\m{R}}\frac{\mb{1}_{\{L_{3}\in\O^{\delta}(k_{3})\}}}{\la L_{3}\ra}\,d\tau_{3}=\i_{\O^{\delta}(k_{3})}\frac{d\eta}{\la\eta\ra}&=&\int_{|\eta|\leq \lam^{6}}\frac{d\eta}{\la\eta\ra}+\sum_{M:\, \text{dyadic},\,M>\lam^{6}}\int_{\O^{\delta}_{M}(k_{3})}\frac{d\eta}{\la\eta\ra} \\
			&\ls &\ln(1+\lam^{6})+\sum_{M:\, \text{dyadic},\,M>\lam^{6}}\frac{|\O^{\delta}_{M}(k_{3})|}{M}\\
			&\ls & \ln(1+\lam^{6})+\sum_{M:\, \text{dyadic},\,M>\lam^{6}}M^{-\frac{1}{2}}\ls \lam^{p},
		\end{eqnarray*}
		which verifies (\ref{small int of L_3}).
	\end{proof}

	\section{Sharpness of the bilinear estimates}
	\label{Sec, proof of sharp bilin}
	
	In this section we will prove Propositions \ref{Prop, sharp bilin est, nondiv 1} and \ref{Prop, sharp bilin est, nondiv 2} which justify the sharpness of the bilinear estimates in Lemmas \ref{Lemma, bilin est, nondiv 1} and \ref{Lemma, bilin est, nondiv 2}. Since their proofs are similar, we will only prove Proposition \ref{Prop, sharp bilin est, nondiv 2}. Throughout this section, 
	\[A:=\Big\{(\vec{k},\vec{\tau})\in \m{Z}^{3}\times\m{R}^{3}:\sum_{i=1}^{3}k_{i}=\sum_{i=1}^{3}\tau_{i}=0\Big\}.\]
	The following lemma will be frequently used in this section.
	
	\begin{lemma}\label{Lemma, area of convolution}
		Let $E_{i}\subseteq\m{Z}\times\m{R}\,(1\leq i\leq 3)$ be bounded regions such that $E_{1}+E_{2}\subseteq -E_{3}$,  i.e., 
		\be\label{comp region}
		-(k_{1}+k_{2}, \tau_{1}+\tau_{2})\in E_{3},\quad \forall\, (k_{i},\tau_{i})\in E_{i},\,1\leq i\leq 2.\ee
		Then 
		\[\int\limits_{A}\prod_{i=1}^{3}\mb{1}_{E_{i}}(k_{i},\tau_{i})= |E_{1}\|E_{2}|.\]
	\end{lemma}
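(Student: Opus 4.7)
The plan is to use the constraint $\sum_{i=1}^{3}k_i=\sum_{i=1}^{3}\tau_i=0$ defining $A$ to parametrize the integration domain by the first two triples. Concretely, I would identify $A$ with $\m{Z}^2\times \m{R}^2$ via the map $(k_1,k_2,\tau_1,\tau_2)\mapsto(k_1,k_2,-(k_1+k_2),\tau_1,\tau_2,-(\tau_1+\tau_2))$, so that integration against the counting-times-Lebesgue measure on $A$ reduces to an ordinary integral in $(k_1,k_2,\tau_1,\tau_2)$, with $(k_3,\tau_3)$ pinned to $-(k_1+k_2,\tau_1+\tau_2)$.

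Under this parametrization the integrand becomes
\[
\mb{1}_{E_1}(k_1,\tau_1)\,\mb{1}_{E_2}(k_2,\tau_2)\,\mb{1}_{E_3}\bigl(-(k_1+k_2),-(\tau_1+\tau_2)\bigr).
\]
The key step is to invoke the hypothesis $E_1+E_2\subseteq -E_3$: whenever $(k_i,\tau_i)\in E_i$ for $i=1,2$, we have $-(k_1+k_2,\tau_1+\tau_2)\in E_3$, so the third indicator is identically $1$ on the support of the product of the first two. This removes the only coupling between the $(k_1,\tau_1)$ and $(k_2,\tau_2)$ variables.

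Once the third indicator is eliminated, Fubini's theorem (or Tonelli, since the integrand is nonnegative) splits the remaining double integral into a product,
\[
\int_A \prod_{i=1}^3 \mb{1}_{E_i}(k_i,\tau_i)
= \Bigl(\int_{\m{Z}\times\m{R}}\mb{1}_{E_1}(k_1,\tau_1)\Bigr)\Bigl(\int_{\m{Z}\times\m{R}}\mb{1}_{E_2}(k_2,\tau_2)\Bigr)=|E_1||E_2|,
\]
where boundedness of the $E_i$ ensures finiteness and the validity of Fubini. There is essentially no obstacle here beyond the bookkeeping of measures; the lemma is really a structural statement that, under the containment hypothesis, the third constraint is automatic on $A$, so the counting reduces to a tensor product.
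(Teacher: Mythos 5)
Your proposal is correct and follows essentially the same route as the paper's proof: parametrize $A$ by $(k_1,k_2,\tau_1,\tau_2)$, observe that the hypothesis $E_1+E_2\subseteq -E_3$ makes the indicator $\mb{1}_{E_3}$ identically $1$ on the support of $\mb{1}_{E_1}\mb{1}_{E_2}$, and then factor the remaining integral by Fubini.
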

	\begin{proof}
		Rewriting the left hand side as 
		\[\iint_{E_{1}}\bigg(\iint_{E_{2}}\mb{1}_{E_{3}}\big(-(k_{1}+k_{2}),-(\tau_{1}+\tau_{2})\big) \,dk_{2}\,d\tau_{2}\bigg)\,dk_{1}\,d\tau_{1},\]
		then the conclusion follows from (\ref{comp region}).
	\end{proof}

	\subsection{Proof of Proposition \ref{Prop, sharp bilin est, nondiv 2}}
	
	Similar to Lemma \ref{Lemma, bilin to weighted l2}, (\ref{bilin est, nondiv 2, reg}) is equivalent to 
	\be\label{sharp weighted l2, nondiv 2, reg}
	\i_{A}\frac{k_{2}\la k_{3}\ra^{s}}{\la k_{1}\ra^{s}\la k_{2}\ra^{s}\la L_{1}\ra^{b}\la L_{2}\ra^{b}\la L_{3}\ra^{1-b}}\prod_{i=1}^{3}f_{i}(k_{i},\tau_{i})\leq C\prod_{i=1}^{3}\|f_{i}\|_{L^{2}(\m{Z}\times\m{R})}, \quad \forall \{f_i\}_{i=1,2,3}\ee
	where 
	\[L_{1}=\tau_{1}-\a_{1}k_{1}^{3},\quad L_{2}=\tau_{2}-\a_{2}k_{2}^{3},\quad  L_{3}=\tau_{3}-\a_{1}k_{3}^{3}.\]
	The corresponding  resonance function $H_2$ is given by (\ref{H2}):
	\be\label{res fun,  nd 2}
	H_2(k_{1},k_{2},k_{3})=-3\a_1 k_2\Big[k_1^2+k_1k_2+\frac{1-r}{3}k_2^2\Big],\ee
	where $r=\frac{\a_2}{\a_1}$. If $k_2\neq 0$, then $H_{2}$ can be rewritten as in (\ref{H2 compact}):
	\be\label{res fun-compact, nd 2}
	H_2(k_{1},k_{2},k_{3})=-3\a_1 k_2^3 h_{r}\Big(\frac{k_1}{k_2}\Big),\ee
	where $h_{r}(x)=x^2+x+\frac{1-r}{3}$.

	\begin{proof}[\bf Proof of Part (a)]
		Suppose there exist $r<\frac14$, $s<-\frac14$ and $b\in\m{R}$ such that (\ref{sharp weighted l2, nondiv 2, reg}) holds.
		\begin{itemize}
			\item For large $N$, define $f_{i}=-\mb{1}_{B_{i}}$ for $1\leq i\leq 3$, where
			\begin{align*}
				B_{1}&=\{(k_{1},\tau_{1}): k_{1}= N,\quad  |\tau_{1}-\a_{1}N^{3}|\leq 1\},\\
				B_{2}&=\{(k_{2},\tau_{2}): k_{2}=-N,\quad |\tau_{2}+\a_{2}N^{3}|\leq 1\},\\
				B_{3}&=\{(k_3,\tau_3): k_{3}=0,\quad |\tau_{3}+(\a_{1}-\a_{2})N^{3}|\leq 2\}.
			\end{align*}
			Then $B_{1}+B_{2}\subseteq -B_{3}$. In addition,  for any $(k_i,\tau_i)\in B_i$, $1\leq i\leq 3$,  $\la L_{1}\ra\sim 1$,$\la L_{2}\ra\sim 1$ and it follows from (\ref{res fun-compact, nd 2}) that $ |H_2(k_1,k_2,k_3)|\sim N^{3}$. Therefore,  $\la L_{3}\ra\sim N^{3}$ and we conclude from (\ref{sharp weighted l2, nondiv 2, reg}) that
			\begin{align*}
				\frac{N}{N^{2s}N^{3(1-b)}}\i_{A}\prod_{i=1}^{3}\mb{1}_{B_{i}}(k_i,\tau_i) \ls C\prod_{i=1}^{3}|B_{i}|^{\frac12}.
			\end{align*}
			Noticing $|B_{i}|\sim 1$, applying Lemma \ref{Lemma, area of convolution} leads to  $N^{3b-2s-2}\ls C$. In other words, 
			\be\label{cond 1 on s, nd 2}
			3b-2s-2\leq 0.\ee
			
			\item Similarly, define $f_{i}=-\mb{1}_{B_{i}}$ for $1\leq i\leq 3$, where
			\begin{align*}
				B_{1}&=\{(k_{1},\tau_{1}): k_{1}= N,\quad  |\tau_{1}-\a_{1}N^{3}|\leq 1\},\\
				B_{3}&=\{(k_{3},\tau_{3}): k_{3}=0,\quad |\tau_{3}|\leq 1\},\\
				B_{2}&=\{(k_{2},\tau_{2}): k_{2}=-N,\quad |\tau_{2}+\a_{1}N^{3}|\leq 2\}.
			\end{align*}
			Then $B_{1}+B_{3}\subseteq -B_{2}$ and by similar argument, we conclude
			\be\label{cond 2 on s, nd 2}
			1-2s-3b\leq 0.\ee
		\end{itemize}
		(\ref{cond 1 on s, nd 2}) and (\ref{cond 2 on s, nd 2}) together yields $s\geq -\frac14$, which contradicts to the assumption $s<-\frac14$. In addition, when $s=-\frac14$, $b$ has to be exactly $\frac12$.
	\end{proof}
	
	\begin{proof}[\bf Proof of Part (b)]
		Under the additional assumption $\wh{w_1}(0,\cdot)=0$, (\ref{bilin est, nondiv 2, reg}) is equivalent to (\ref{sharp weighted l2, nondiv 2, reg}) with the additional restriction $f_{1}(0,\cdot)=0$. When $r=1$, writing $\a_1=\a_2=\a$, then $H_2(k_1,k_2,k_3)=3\a k_{1}k_{2}k_{3}$. Define 
		$f_{i}=-\mb{1}_{B_{i}}$ for $1\leq i\leq 3$, where
		\begin{align*}
			B_{1}&=\{(k_{1},\tau_{1}): k_{1}= N,\quad  |\tau_{1}-\a N^{3}|\leq 1\},\\
			B_{2}&=\{(k_{2},\tau_{2}): k_{2}=-N,\quad |\tau_{2}+\a N^{3}|\leq 1\},\\
			B_{3}&=\{(k_3,\tau_3): k_{3}=0,\quad |\tau_{3}|\leq 2\}.
		\end{align*}
		Then $B_{1}+B_{2}\subseteq -B_{3}$. In addition,  for any $(k_i,\tau_i)\in B_i$, $1\leq i\leq 3$,  $\la L_{1}\ra\sim 1$, $ \la L_{2}\ra\sim 1$ and $ H_2(k_1,k_2,k_3)=0$, which implies $\la L_{3}\ra\sim 1$. Then it follows from (\ref{sharp weighted l2, nondiv 2, reg}) that
		\begin{align*}
			\frac{N}{N^{2s}}\i_{A}\prod_{i=1}^{3}\mb{1}_{B_{i}}(k_i,\tau_i) \ls C\prod_{i=1}^{3}|B_{i}|^{\frac12}.
		\end{align*}
		Noticing $|B_{i}|\sim 1$, applying Lemma \ref{Lemma, area of convolution} leads to  $s\geq \frac12$.
	\end{proof}
	
	\begin{proof}[\bf Proof of Part (c)]
		Analogous to part (b) but without the restriction $\wh{w_1}(0,\cdot)$, so $f_1(0,\cdot)$ is not required to be zero anymore. So it is valid to define $f_{i}=-\mb{1}_{B_{i}}$, where 
		\begin{align*}
			B_{1}&=\{(k_{1},\tau_{1}): k_{1}= 0,\quad  |\tau_{1}|\leq 1\},\\
			B_{2}&=\{(k_{2},\tau_{2}): k_{2}=N,\quad |\tau_{2}-\a N^3|\leq 1\},\\
			B_{3}&=\{(k_3,\tau_3): k_{3}=-N,\quad |\tau_{3}+\a N^{3}|\leq 2\}.
		\end{align*}
		Then  $B_{1}+B_{2}\subseteq -B_{3}$. In addition, for any $(k_i,\tau_i)\in B_i$, $1\leq i\leq 3$,  $\la L_{1}\ra\sim 1$, $\la L_{2}\ra\sim 1$ and $H_2(k_1,k_2,k_3)=0$, which implies $\la L_{3}\ra\sim 1$. Then it follows from (\ref{sharp weighted l2, nondiv 2, reg}) that
		\begin{align*}
			N\i_{A}\prod_{i=1}^{3}\mb{1}_{B_{i}}(k_i,\tau_i)\ls C\prod_{i=1}^{3}|B_{i}|^{\frac12},
		\end{align*}
		which implies $N\ls C$ due to Lemma \ref{Lemma, area of convolution}. But this is impossible since $N$ can be arbitrarily large.
	\end{proof}

	\begin{proof}[\bf Proof of Part (d)]
		The following proof is in the similar spirit as that in (\cite{Oh09}, Proposition 3.9).  
		
		The proofs for the case $r\in[\frac14,1)$ and the case $r\in(1,\infty)$ are analogous, so we will just assume $r\in[\frac14,1)$.  Suppose there exist $s<s_{r}$ and $b\in\m{R}$ such that (\ref{bilin est, nondiv 2, reg}) holds. Then (\ref{sharp weighted l2, nondiv 2, reg}) holds for some constant $C=C(\a_1,\a_2,s,b)$. Since $r\in[\frac14,1)$, the function $h_{r}$ has two roots 
		\[x_{1r}=-\frac12-\frac{\sqrt{12r-3}}{6},\quad   x_{2r}=-\frac12+\frac{\sqrt{12r-3}}{6}.\]
		In addition, $-1<x_{1r}\leq x_{2r}<0$ and $\mu(x_{1r})=\mu(x_{2r})=\mu(\sqrt{12r-3})=\sigma_r$. When $k_2\neq 0$, the resonance function $H_2$ can be written as 
		\[H_{2}(k_1,k_2,k_3)=-3\a_1 k_2^3\Big(\frac{k_1}{k_2}-x_{1r}\Big)\Big(\frac{k_1}{k_2}-x_{2r}\Big).\]
		In the following, we will first show that $\frac13\leq b\leq \frac23$ and then derive a contradiction to $s<s_{r}$.
		
		For large positive $N$, define $f_{i}=\mb{1}_{B_{i}}$ for $1\leq i\leq 3$, where
		\begin{align*}
			B_{1}&=\{(k_{1},\tau_{1}): k_{1}= 0,\quad  |\tau_{1}|\leq 1\},\\
			B_{2}&=\{(k_{2},\tau_{2}): k_{2}=N,\quad |\tau_{2}-\a_{2}N^3|\leq 1\},\\
			B_{3}&=\{(k_3,\tau_3): k_{3}=-N,\quad |\tau_{3}+\a_2 N^{3}|\leq 2\}.
		\end{align*}
		Then $B_{1}+B_{2}\subseteq -B_{3}$. In addition, for any $(k_i,\tau_i)\in B_i$, $1\leq i\leq 3$, $\la L_{1}\ra\sim 1$,$\la L_{2}\ra\sim 1$ and 
		\[ |H_2(k_1,k_2,k_3)|=|\a_2 N^{3}-\a_1 N^{3}|\sim N^{3},\]
		which implies $\la L_{3}\ra\sim N^{3}$. Then it follows from (\ref{sharp weighted l2, nondiv 2, reg}) that
		\begin{align*}
			\frac{N^{1+s}}{N^{s}N^{3(1-b)}}\i_{A}\prod_{i=1}^{3}\mb{1}_{B_{i}}(k_i,\tau_i) \ls C\prod_{i=1}^{3}|B_{i}|^{\frac12}.
		\end{align*}
		Noticing $|B_{i}|\sim 1$, applying Lemma \ref{Lemma, area of convolution} leads to  $b\leq \frac23$. 
		On the other hand,  define $f_{i}=\mb{1}_{B_{i}}$ for $1\leq i\leq 3$, where
		\begin{align*}
			B_{1}&=\{(k_{1},\tau_{1}): k_{1}= 0,\quad  |\tau_{1}|\leq 1\},\\
			B_{3}&=\{(k_{3},\tau_{3}): k_{3}=-N,\quad |\tau_{3}+\a_{1}N^3|\leq 1\},\\
			B_{2}&=\{(k_2,\tau_2): k_{2}=N,\quad |\tau_{2}-\a_1 N^3|\leq 2\}.
		\end{align*}
		Then $B_{1}+B_{3}\subseteq -B_{2}$ and by applying similar arguments, we conclude that  $b\geq \frac13$. Thus, $\frac13\leq b\leq \frac23$. 
		
		Next, we discuss two situations depending on whether $\sqrt{12r-3}$ is a rational number or not.
		
		\begin{itemize}
			\item {\bf Case 1.} $\sqrt{12r-3}\in\m{Q}$.  
			
			In this case, $x_{1r}\in\m{Q}$, $\sigma_r=1$ and $s_r=1$. In addition, since $-1<x_{1r}<0$, we can assume $x_{1r}=\frac{p_1}{q_1}$ for some $p_1,\,q_1\in\m{Z}$ with $p_1<0$, $q_1>0$ and $p_{1}+q_{1}>0$.  For any large positive $N$, define $f_{i}=-\mb{1}_{B_{i}}$ for $1\leq i\leq 3$, where
			\begin{align*}
				B_{1}&=\{(k_{1},\tau_{1}): k_{1}= p_1 N,\quad  |\tau_{1}-\a_{1}(p_1 N)^{3}|\leq 1\},\\
				B_{2}&=\{(k_{2},\tau_{2}): k_{2}=q_1 N,\quad |\tau_{2}-\a_{2}(q_1 N)^3|\leq 1\},\\
				B_{3}&=\big\{(k_3,\tau_3): k_{3}=-(p_1+q_1)N,\quad \big|\tau_{3}+\a_{1}(p_1 N)^{3}+\a_2(q_1 N)^3\big|\leq 2\big\}.
			\end{align*}
			Then $B_{1}+B_{2}\subseteq -B_{3}$. In addition, for any $(k_i,\tau_i)\in B_i$, $1\leq i\leq 3$, $\la L_1\ra\sim 1$, $\la L_2\ra\sim 1$ and $\frac{k_1}{k_2}=\frac{p_1}{q_1}=x_{1r}$. As a result, 
			\[H_2(k_1,k_2,k_3)=-3\a_1 k_2^3\Big(\frac{k_1}{k_2}-x_{1r}\Big)\Big(\frac{k_1}{k_2}-x_{2r}\Big)=0,\]
			which implies $\la L_3\ra\sim 1$. Then it follows from (\ref{sharp weighted l2, nondiv 2, reg}) that
			\begin{align*}
				\frac{N^{1+s}}{N^{2s}}\i_{A}\prod_{i=1}^{3}\mb{1}_{B_{i}}(k_i,\tau_i) \ls C\prod_{i=1}^{3}|B_{i}|^{\frac12}.
			\end{align*}
			Noticing $|B_{i}|\sim 1$, applying Lemma \ref{Lemma, area of convolution} leads to  $s\geq 1$, which contradicts to $s<s_r=1$.

			\item  {\bf Case 2.} $\sqrt{12r-3}\notin\m{Q}$.
			
			Since $s<s_r$ by the assumption, there exists $\eps_0>0$ such that $s+\eps_0<s_r$.  Recalling $\mu(x_{1r})=\sigma_r$, so $x_{1r}$ is approximable with power $\sigma_r-\eps_0$.  Hence, it follows from Lemma \ref{Lemma, seq for IE} that there exists a sequence $\{(m_j,\,n_j)\}_{j=1}^{\infty}\subset\m{Z}\times\m{Z}^*$ such that $n_j>0$,  $\lim\limits_{j\rightarrow\infty}n_j=\infty$ and 
			\[0<\Big|\frac{m_j}{n_j}-x_{1r}\Big|< \frac{1}{n_j^{\sigma_r-\eps_0}}.\]
			When $j$ is large enough, $\frac{m_j}{n_j}$ will be very close to $x_{1r}$. Since $-1<x_{1r}<0$, it implies
			$m_j<0$, $m_j+n_j>0$ and $|m_j|\sim m_j+n_j\sim n_j$. In addition, $|\frac{m_j}{n_j}-x_{2r}|\approx |x_{1r}-x_{2r}|$ which is a positive constant only depending on $r$. As a result, 
			\[\big|H_2\big(m_j,n_j,-(m_j+n_j)\big)\big| = \Big|3\a_1 n_j^3\Big(\frac{m_j}{n_j}-x_{1r}\Big)\Big(\frac{m_j}{n_j}-x_{2r}\Big)\Big| \ls n_j^{3-\sigma_r+\eps_0}.\]
			Denote 
			\be\label{power bdd}
			\zeta_r=\max\{\eps_0,3-\sigma_r+\eps_0\}.\ee
			Then no matter $\sigma_r<3$ or $\sigma_r\geq 3$, it always holds that
			\be\label{upper bdd for H, nd 2}
			\big\la H_2\big(m_j,n_j,-(m_j+n_j)\big)\big\ra =1+ \big|H_2\big(m_j,n_j,-(m_j+n_j)\big)\big| \ls n_j^{\zeta_r}.\ee
			
			\begin{itemize}
				\item For any large $j$ as in the above discussion, define $f_{i}=\mb{1}_{B_{i}}$ for $1\leq i\leq 3$, where
				\begin{align*}
					B_{1}&=\{(k_{1},\tau_{1}): k_{1}= m_j,\quad  |\tau_{1}-\a_{1}m_j^{3}|\leq 1\},\\
					B_{2}&=\{(k_{2},\tau_{2}): k_{2}=n_j,\quad |\tau_{2}-\a_{2}n_j^3|\leq 1\},\\
					B_{3}&=\big\{(k_3,\tau_3): k_{3}=-(m_j+n_j),\quad \big|\tau_{3}+\a_{1}m_j^{3}+\a_{2}n_j^3\big|\leq 2\big\}.
				\end{align*}
				Then $B_{1}+B_{2}\subseteq -B_{3}$. In addition, for any $(k_i,\tau_i)\in B_i$, $1\leq i\leq 3$, $\la L_1\ra\sim 1$, $\la L_2\ra\sim 1$ and 
				\[\la H_2(k_1,k_2,k_3)\ra= \big\la H_2\big(m_j,n_j,-(m_j+n_j)\big)\big\ra\ls n_j^{\zeta_r},\]
				which implies $\la L_3\ra\ls n_j^{\zeta_r}$. Then it follows from (\ref{sharp weighted l2, nondiv 2, reg})  and $|m_j|\sim m_j+n_j\sim n_j$ that
				\begin{align*}
					\frac{n_j^{1+s}}{n_j^{2s}n_j^{\zeta_r(1-b)}}\i_{A}\prod_{i=1}^{3}\mb{1}_{B_{i}}(k_i,\tau_i) \ls C\prod_{i=1}^{3}|B_{i}|^{\frac12}.
				\end{align*}
				Noticing $|B_{i}|\sim 1$, applying Lemma \ref{Lemma, area of convolution} leads to 
				\be\label{sharp nd2, res, case 2, cond 1 on s}
				1-s-\zeta_r(1-b)\leq 0.\ee
				
				\item On the other hand, define $f_{i}=\mb{1}_{B_{i}}$ for $1\leq i\leq 3$, where
				\begin{align*}
					B_{1}&=\{(k_{1},\tau_{1}): k_{1}= m_j,\quad  |\tau_{1}-\a_{1}m_j^{3}|\leq 1\},\\
					B_{3}&=\big\{(k_3,\tau_3): k_{3}=-(m_j+n_j),\quad \big|\tau_{3}+\a_{1}(m_j+n_j)^{3}\big|\leq 1\big\},\\
					B_{2}&=\{(k_{2},\tau_{2}): k_{2}=n_j,\quad |\tau_{2}+\a_{1}m_j^{3}-\a_{1}(m_j+n_j)^{3}|\leq 2\}.
				\end{align*}
				Then $B_{1}+B_{3}\subseteq -B_{2}$. In addition,  by applying similar arguments, we conclude that 
				\be\label{sharp nd2, res, case 2, cond 2 on s}
				1-s-\zeta_r b\leq 0.\ee
			\end{itemize}
			Adding (\ref{sharp nd2, res, case 2, cond 1 on s}) and (\ref{sharp nd2, res, case 2, cond 2 on s}) together yields $s\geq 1-\frac{\zeta_r}{2}$. Recalling $s+\eps_0<s_r$, so
			\be\label{index relation}
			1-\frac{\zeta_r}{2}\leq s<s_r-\eps_0.\ee
			If $\sigma_r\geq 3$, then $s_r=1$ and $\zeta_r=\eps_0$, which contradicts to (\ref{index relation}). If $2\leq\sigma_r< 3$, then $s_r=\frac{\sigma_r-1}{2}$ and $\zeta_r=3-\sigma_r+\eps_0$, which also contradicts to (\ref{index relation}).
		\end{itemize}
	\end{proof}
	
	\appendix
	\section{Well-posedness for the Hirota-Satsuma systems (\ref{H-S system})}
	\label{Appendix,  H-S}
	
	In this appendix, we summarize the analytical well-posedness results on the Hirota-Satsuma systems (\ref{H-S system}).
	
	\begin{theorem}\label{Thm, LWP for H-S-1}
		The Hirota-Satsuma system (\ref{H-S system}) is A-LWP in  $\mcal{H}_{2}^s$ if one of the following conditions is satisfied.
		\begin{enumerate}[(1)]
			\item $a_1\in(-\infty,\frac14)\setminus\{0\}$ and $s\geq -\frac14$;
			
			\item $a_{1}=1$, $c_{12}=0$ and $s\geq \frac12$;
			
			\item $a_1\in[\frac14,\infty)\setminus\{1\}$, and $s\geq 1$ or $s>s_{a_1}$ (equivalently $s\geq \min\{1,\,s_{a_1}+\}$).
		\end{enumerate}
	\end{theorem}
	
	Due to the following conserved energies for (\ref{H-S system}), the A-GWP follows directly from Theorem \ref{Thm, LWP for H-S-1}.
	\be\label{H-S energy}\begin{split}
		E_{1}(u,v) &= \int u^{2}+\frac{c_{12}}{3}v^{2}\,dx,\\
		E_{2}(u,v) &= \int (1-a_1)u_{x}^{2}+c_{12}v_{x}^{2}-2(1-a_{1})u^{3}-c_{12}uv^{2}\,dx.
	\end{split}\ee
	
	\begin{theorem}\label{Thm, GWP for H-S-1}
		Let $c_{12}>0$. Then the Hirota-Satsuma system (\ref{H-S system}) is  A-GWP  in  $\mcal{H}^{s}_{2}$ if one of the following conditions is satisfied.
		\begin{enumerate}[(1)]
			\item $a_1\in(-\infty,\frac14)\setminus\{0\}$ and $s\geq 0$;
			
			\item $a_1\in[\frac14,1)$ and 
			$s\geq 1$.
		\end{enumerate}
	\end{theorem}
	
	%{\small
		%\bibliographystyle{plain}
		%\bibliography{Ref-Dispersive}
		%}

	{\small

	}

	\bigskip
	
	\thanks{(X. Yang) Department of Mathematics, University of California, Riverside, CA 92521, USA.}
	
	\thanks{Email: xiny@ucr.edu}
	
	\bigskip
	
	\thanks{(B.-Y. Zhang) Department of Mathematical Sciences, University of Cincinnati, Cincinnati, OH 45221, USA.}
	
	\thanks{Email: zhangb@ucmail.uc.edu}
	
\end{document}